\definecolor{rltblue}{rgb}{0,0,0.4}
\definecolor{drkred}{rgb}{0.6,0,0}
\definecolor{drkgreen}{rgb}{0,0.4,0}
\declaretheorem[numberwithin=section]{theorem}
\declaretheorem[sibling=theorem]{lemma}
\declaretheorem[sibling=theorem]{proposition}
\declaretheorem[sibling=theorem]{corollary}
\declaretheorem[sibling=theorem]{convention}
\declaretheorem[numberwithin=theorem]{claim}
\theoremstyle{definition}
\declaretheorem[sibling=theorem]{definition}
\newcommand{\dSinf}[1]{d\text{-}\Sigma^{\mathrm{in}}_{#1}}
\newcommand{\bigwwedge}{%
  \mathop{
    \mathchoice{\bigwedge\mkern-15mu\bigwedge}
               {\bigwedge\mkern-12.5mu\bigwedge}
               {\bigwedge\mkern-12.5mu\bigwedge}
               {\bigwedge\mkern-11mu\bigwedge}
    }
}
\def\sA{\mathcal A}
\def\A{\mathcal A}
\def\B{\mathcal{B}}
\def\C{\mathcal{C}}
\def\D{{\mathcal D}}
\def\bN{{\mathbb N}}
\def\bQ{{\mathbb Q}}
\def\E{\mathcal{E}}
\def\K{\mathcal K}
\def\L{\mathcal L}
\def\sL{\mathcal L}
\def\sM{{\mathcal M}}
\def\P{{\mathcal P}}
\def\S{{\mathcal S}}
\def\bQ{{\mathbb Q}}
\def\la{\langle}
\def\ra{\rangle}
\title[Homogeneous Linear Orderings]{$sp$-Homogeneous Linear Orderings}
\author[Calvert]{Wesley Calvert}
\address{School of Mathematical and Statistical Sciences\\ Mail Code 4408\\
Southern Illinois University, Carbondale\\
1245 Lincoln Drive\\
Carbondale, Illinois 62901}
\email{wcalvert@siu.edu}
\author[Cenzer]{Douglas Cenzer}
\address{Department of Mathematics, University of Florida,
Gainesville, FL 32611}
\email{cenzer@ufl.edu}
\author[Gonzalez]{David Gonzalez}
\address{Department of Mathematics, University of Notre Dame, Notre Dame, IN 46556}
\email{dgonza42@nd.edu}
\author[Harizanov]{Valentina Harizanov}
\address{Department of Mathematics, George Washington
University, Washington, DC 20052}
\email{harizanv@gwu.edu}
\author[Ng]{Keng Meng Ng}
\thanks{The authors' research at the Hausdorff Institute for Mathematics, Bonn in Fall 2025, was partially funded by the Deutsche Forschungsgemeinschaft (DFG, German Research Foundation) under Germany's Excellence Strategy – EXC-2047/1–390685813.  The authors were also partially supported by NSF grant DMS-2401437. Harizanov was also partially supported by FRG NSF grant DMS-2152095.}
\date{\today}
\begin{document}

\begin{abstract} We study linear orderings expanded by functions for successor and predecessor. The successor and predecessor on linear orderings capture the relatively intrinsically computably enumerable information about orderings in much the same way that dependence captures that for vector spaces. In particular, the \textit{sp}-homogeneous and weakly \textit{sp}-homogeneous linear orderings are those which are (ultra-)homogeneous or weakly homogeneous with this additional structure.
 We demonstrate that these orderings are always relatively $\Delta_4$ categorical and determine exactly which ones are (uniformly) relatively $\Delta_3$ categorical.

    We also provide a classification for \textit{sp}-homogeneity and weak \textit{sp}-homogeneity.
    We establish that this is the best possible classification by showing that the set of \textit{sp}-homogeneous linear orderings is $\Pi_5^0$ complete, and that the set of weakly \textit{sp}-homogeneous linear orderings is $\Sigma_6^0$ complete.
    These results are obtained in two different ways, one using a hands-on computability theoretic approach and another using more abstract descriptive set theory.
\end{abstract}

\maketitle

\section{Introduction}

We aim to extend the class of well-understood, simple linear orderings and calibrate the exact levels of complexity in this class by blending various methods from logic.
One of the most well-known and well-studied simplicity notions comes from model theory.
A structure $\A$ is said to be \emph{homogeneous} if any isomorphism
between finitely generated substructures extends to an automorphism
of $\A$.
This notion is sometimes also called ultra-homogeneous, but we will use the term homogeneous throughout the paper.

Homogeneous structures were first studied
by Fra\"{i}ss\'{e} \cite{Fr86} from the perspective of model theory.
In the many years since then, the study of homogeneous structures has developed into an important endeavor in many fields, including combinatorics, descriptive set theory, permutation group theory, topological dynamics, and representation theory (see Lachlan's ICM survey \cite{L87} or the more complete account by Macpherson \cite{Mac11}).  

In \cite{AC17}, Adams and Cenzer defined the notion of \emph{weakly homogeneous} structures,
where $\A$ is weakly homogeneous if there is a
finite (exceptional) set of elements $a_1,\dots,a_n$ of $\A$ such that any isomorphism between finitely generated substructures $\A$ which maps each $a_i$ to itself may be extended to an automorphism of the structure. 
This extension appreciably expands the class of homogeneous structure while still maintaining many of the properties enjoyed by homogeneous structures.

Considerable progress has been made concerning the analysis of weakly homogeneous structures.
The initial analysis in \cite{AC17} focused on linear orderings, equivalence structures, injection structures, and trees.
In each case, the weakly homogeneous structures were characterized.
For instance, it is well known that the only countable, homogeneous linear ordering is $(\mathbb{Q},<)$, the order type of the rationals. It was shown that the only weakly homogeneous linear orderings are those with finitely many successivities.
These are also known to be the linear orderings that are \textit{relatively computably categorical}, i.e., such that any two copies $\A$ and $\B$ have an isomorphism $f:\A\to\B$ that can be given by an algorithm based only on the information of $\A$ and $\B$.
This is not a coincidence.
The key result of \cite{AC17} shows that there is an interesting connection between weak homogeneity and computability-theoretic properties of a structure.
In particular, a structure $\A$ is weakly homogeneous, then it is relatively $\Delta^0_2$ categorical.
Moreover, $\A$ is also locally finite; it is relatively computably categorical.
The notion of relative computable categoricity is well studied in computable structure theory as an alternate notion of simplicity for structures.
Extensions of this idea, such as the relative $\Delta_2^0$ categoricity mentioned above, form a hierarchy of complexity above relative computable categoricity, with each level allowing more computational power to calculate isomorphisms between copies of the given structure.

This notion of complexity is of independent interest.
For example, Dzoev and Goncharov \cite{DG1980}  and independently Remmel \cite{Rem81} characterized the relatively computably categorical linear orderings. Furthermore, McCoy \cite{McCoy2003} characterized the relatively $\Delta_2$ categorical linear orderings.
The above theorem connects the purely model-theoretic notion of weak homogeneity and the computability-theoretic categoricity hierarchy, suggesting that this fortuitous connection merits further investigation.
An important problem regarding this connection is whether the converse of this result holds in general or for certain classes of structures.
As we observed for plain linear orderings, the converse of the above result holds.
Between the papers \cite{AC17} and \cite{ACN20}, results along these lines were proven for many structures of importance in classical mathematics.
For example, equivalence relations, trees under the predecessor function, injection structures, Boolean algebras, and Abelian $p$-groups have all been analyzed.

We take a different approach to weak homogeneity in this paper.
We consider linear orderings, but not the plain linear orderings considered in \cite{AC17}.
We consider linear orderings equipped with a total successor and predecessor functions, $s$ and $p$. Here $s(x)$ is the successor of $x$, if one exists, and otherwise $s(x) = x$; similarly, $p(x)$ is either the predecessor of $x$ or equals $x$.  The successor and predecessor relations on linear orderings capture the relatively intrinsically computably enumerable information about orderings in much the same way that dependence captures that for vector spaces. 

We study linear orderings that become homogeneous or weakly homogeneous when enriched with this additional structure, also called \textit{sp-homogeneous} or \textit{weakly $sp-$homogeneous} orderings. It is considerably more interesting to study \textit{sp}-homogeneity for linear orderings over plain homogeneity.
It is a far richer class of orderings.
Most saliently, \textit{sp}-homogeneous linear orderings pierce farther into the categoricity hierarchy than their plainly homogeneous cousins.
We see in this paper that all weakly \textit{sp}-homogeneous orderings are relatively $\Delta_4$ categorical, and many of them achieve this level of categoricity.
As mentioned above, the $\Delta_1$ and $\Delta_2$ levels of relative categoricity are well-understood for linear orderings (\cite{DG1980,Remmel1981, McCoy2002}), and, indeed, all orderings at these levels are $sp$-homogeneous.
In this, $sp$-homogeneity captures the entire class of simple orderings that are already well-understood.

In contrast, there is good reason to believe that even relative $\Delta_3$ categoricity for linear orderings is fundamentally difficult to understand in full generality.
This was first argued in \cite{McCoy2002}, and \cite{GR23} puts a point on this by showing a uniform isomorphism preserving way to transform any relatively computable categorical structure into a relatively $\Delta_3$ categorical linear ordering.
This means that analyzing specific subclasses of orderings at the $\Delta_3$ level is required to make significant progress at this point of the categoricity hierarchy.
Linear orderings that are $sp$-homogeneous or weakly $sp$-homogeneous provide a large class of tangible examples that allow the exploration of these difficult-to-understand levels of complexity.
This concreteness yields results; we provide a full classification of the categoricity levels of the $sp$-homogeneous linear orderings.
Since the $\Delta_1$ and $\Delta_2$ levels are well-understood, this comes down to distinguishing the $\Delta_3$ case from the $\Delta_4$ case.
An interesting feature of our arguments that contrasts previous work is that we find a complexity division with continuum many models on each side of the line.
Previous work depended on listing the simpler models explicitly.
We achieve this by enumerating a set of structural ``forbidden arrangements'' that, when avoided, ensure the relative $\Delta_3$ categoricity of an $sp$-homogeneous linear ordering.
 \begin{theorem}
        An $sp$-homogeneous linear ordering $L$ is relatively $\Delta_3$ categorical if and only if the following holds. 
        \begin{enumerate}
            \item It has no intervals of the form $Sh(S)$ where $S$ includes an infinite block and finite blocks of arbitrary size or $\zeta$ along with another finite block.
            \item If $I$ is $\omega\cdot\eta$, $Sh(\omega,\omega^*)$, $\omega^*\cdot\eta$, or $\zeta\cdot\eta$ or a sum of at least two of those orderings, it has an interval to its left and right in $L$ that only has finite blocks of a bounded size.
            \item Any $\zeta\cdot\eta$ does not have an interval to its right or left isomorphic to a shuffle sum including an infinite block.
        \end{enumerate}
\end{theorem}
This theorem provides the most robust understanding of the $\Delta_3$ level of the categoricity hierarchy for linear orderings currently known.
For example, it subsumes the result in \cite{McCoy2002} that demonstrates that all shuffle sums of finite blocks are relatively $\Delta_3$ categorical.

En route to proving this theorem, we establish multiple tools needed to understand the concepts of $sp$-homogeneity and weak $sp$-homogeneity.
This means that, along the way to our main analysis, we establish independently interesting computability-theoretic facts about $sp$-homogeneity.
Critical to our analysis, for example, is a structural characterization of the $sp$-homogeneous and weakly $sp$-homogeneous orderings that comes in the following theorem.
All relevant terms used in the theorem are provided in the paper; the critical definition is that of the \textit{shuffle sum} of a countable set of orderings, $Sh(A)$, which densely arranges each of the orderings in $A$.
\begin{theorem} Let $L$ be a linear ordering. Then
$L$ is $sp$-homogeneous if and only if there is a pairwise disjoint family $\{A,A_v, v \in V\}$ of subsets of $\omega \cup \{\omega,\omega^*,\zeta\}$ such that $L$ is the union of suborderings, as follows: 

\begin{enumerate}
\item For each $v \in V$, an open interval $I_v$ isomorphic to $Sh(A_v)$;

\item For each $a \in A$, a single block of size $a$.
\end{enumerate}
\bigskip
$L$ is \emph{weakly} $sp$-homogeneous if and only if it can be written as
\[L=L_1+B_1+\cdots+B_{k-1}+L_{k},\]
where each $L_i$ is (possibly empty) $sp$-homogeneous and each $B_i$ is a single (non-empty) block.
\end{theorem} 
This characterization is entirely structural and concrete, but it is rather complicated. We note that this  complexity is necessary in the definition, as evident from our next two closely related theorems, one from a computability-theoretic perspective and the other from a more descriptive set-theoretic perspective.
\begin{theorem}
\begin{enumerate}
    \item $\{e: L_e \ \text{is $sp$-homogeneous}\}$ is $\Pi^0_5$ complete.
    \item $\{e: L_e \ \text{is weakly $sp$-homogeneous}\}$ is $\Sigma^0_6$ complete.  
\end{enumerate}
\end{theorem}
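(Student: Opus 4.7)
The plan is to prove each part by matching upper and lower bounds. For the upper bounds, I would parse the structural characterization of Theorem \ref{thm:homogChar} into arithmetic quantifiers. Each atomic condition in the characterization concerns an interval of the ordering --- whether it embeds as a shuffle sum of certain types, whether it contains only finite blocks of bounded size, or whether it is isomorphic to a prescribed order type such as $\zeta\cdot\eta$, $\omega\cdot\eta$, or $Sh(\omega,\omega^*)$. Each such condition on a single interval is arithmetic at some low level, and the quantification over intervals of $L_e$ (realized as pairs of elements, or as Dedekind cuts defined by computable predicates) adds further levels. Summing these contributions while being careful with negations, I expect the sp-homogeneity condition to land in $\Pi^0_5$. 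For weak sp-homogeneity, I would add an initial existential quantifier over a finite set of exceptional elements $F\subseteq\omega$: the ordering is weakly sp-homogeneous precisely when some such $F$ exists so that removing $F$ and naming its members leaves an sp-homogeneous structure. This yields $\exists F\,(\Pi^0_5) = \Sigma^0_6$.

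For the $\Pi^0_5$ lower bound, I would fix a universal $\Pi^0_5$-complete predicate $A = \{e : (\forall n_1)(\exists n_2)(\forall n_3)(\exists n_4)(\forall n_5)\, R(e,\overline{n})\}$ and, uniformly in $e$, construct a computable ordering of the form $L_e = B_L + \sum_{n\in\omega} I_{e,n} + B_R$, where $B_L$ and $B_R$ are fixed insulating pieces ensuring that in the absence of any gadget deformation the ordering is sp-homogeneous, and where each gadget $I_{e,n}$ has an order type that depends on whether the $\Sigma^0_4$ statement $(\exists n_2)(\forall n_3)(\exists n_4)(\forall n_5)\,R(e,n,n_2,\ldots,n_5)$ holds. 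If that statement holds, $I_{e,n}$ should approximate a "safe" order type (e.g., a shuffle of finite blocks of bounded size) compatible with the characterization; if it fails, $I_{e,n}$ should introduce a substructure explicitly forbidden by Theorem \ref{thm:homogChar} (e.g., a shuffle sum including an infinite block together with finite blocks of arbitrary sizes). The reduction is then $e \in A \iff L_e$ is sp-homogeneous.

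For the $\Sigma^0_6$ lower bound, I would reduce from a $\Sigma^0_6$-complete predicate $B = \{e : (\exists m)(\forall n_1)(\exists n_2)(\forall n_3)(\exists n_4)(\forall n_5)\, R'(e,m,\overline{n})\}$, extending the previous construction so that choosing a finite exceptional set $F$ effectively commits to a value of $m$ and reduces the residual problem to the $\Pi^0_5$ statement already handled. To avoid spurious witnesses, only configurations of $F$ encoding genuine values of $m$ should be capable of producing a sp-homogeneous residual, which I would arrange by gating each $I_{e,n}$ on a specific, identifiable collection of "dummy" elements that must be placed in $F$ for the gadget to revert to a safe type.

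The main obstacle, shared by both lower bound constructions, is that the characterization involves global interactions between adjacent intervals (for example, requiring intervals with only bounded finite blocks to flank certain types), not merely local conditions on single pieces. Consequently, a "bad" gadget at position $n$ can cause violations of the characterization in ways that depend on neighboring gadgets, and even two individually "good" gadgets placed side by side can jointly produce a forbidden substructure (for instance, two adjacent shuffle sums might merge to create a $\zeta\cdot\eta$ next to an infinite-block shuffle). I would mitigate this by inserting fixed insulating intervals of a robust sp-homogeneous type between successive gadgets --- thick enough that the end-behavior of one gadget cannot interact with the next --- and by verifying clause by clause that the characterization's global conditions reduce, in the presence of these insulators, to independent local conditions on each $I_{e,n}$.
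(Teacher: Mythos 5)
Your upper-bound plan (unfold the structural characterization into a $\Pi^0_5$ condition, then prepend an existential quantifier over exceptional elements to reach $\Sigma^0_6$) matches the paper's. But you have the wrong characterization in mind. The conditions you keep citing --- bounded finite block sizes flanking certain types, forbidden intervals like $\zeta\cdot\eta$ or $\omega\cdot\eta$ adjacent to infinite-block shuffles --- come from Theorem \ref{spd3} on $\Delta_3$ categoricity, not from Theorem \ref{thm:homogChar}. The latter is much simpler: $L$ is $sp$-homogeneous iff it decomposes into open intervals isomorphic to $Sh(A_v)$ and solitary blocks, with pairwise disjoint block-type sets $\{A,A_v\}$. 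This mix-up produces a concrete error in your lower bound: you propose as a ``bad'' gadget ``a shuffle sum including an infinite block together with finite blocks of arbitrary sizes,'' but such a thing --- e.g.\ $Sh(\omega+1)$ --- is $sp$-homogeneous by Theorem \ref{ush}. There are no forbidden single shuffle sums; what fails $sp$-homogeneity is the \emph{juxtaposition} of two distinct shuffle sums sharing block types (e.g.\ $Sh(\omega+1)+Sh(\omega)$), where a finite block of a given size sits in two automorphically inequivalent environments.

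Because of this, the insulator strategy you lean on would defeat the reduction outright. The paper's construction builds $L_{\phi(x)}=\sum_a M_a$ where \emph{every} summand is itself a shuffle sum (hence individually $sp$-homogeneous), with $M_a\cong Sh(\omega+1)$ when an associated $\Sigma^0_4$ condition holds and $M_a\cong Sh(\omega)$ otherwise. When all summands are $Sh(\omega+1)$, shuffle-sum absorption collapses the whole sum to a single $Sh(\omega+1)$, which is $sp$-homogeneous; when a cofinal tail of summands is $Sh(\omega)$, the sum is $Sh(\omega+1)+Sh(\omega)$, which fails. Inserting insulating intervals between the $M_a$ breaks the absorption: with any insulator $B$ whose block type lies outside $\omega+1$, even the all-good case $Sh(\omega+1)+B+Sh(\omega+1)+B+\cdots$ is already not $sp$-homogeneous (the two $Sh(\omega+1)$ pieces share block types but cannot lie in a single shuffle interval across $B$), so your reduction would output ``no'' on every input; and insulators with block type in $\omega+1$ just get absorbed and insulate nothing. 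The interaction between adjacent pieces that you try to suppress is not an obstruction to manage --- it is precisely the mechanism that makes the reduction work. The same redesign is needed for your $\Sigma^0_6$ argument, which the paper handles by using summands $M_{a,x}\cong Sh(\omega+1)+1+Sh(\omega+1)$ or $Sh(\omega+1)+1+Sh(\omega)$, letting the explicit singleton blocks serve as the finitely many exceptional points when only finitely many summands are ``bad.''
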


\begin{theorem}
\begin{enumerate}
    \item $\{\A\in Mod(LO): \A\ \text{is $sp$-homogeneous}\}$ is $\mathbf{\Pi}^0_5$ complete.
    \item $\{\A\in Mod(LO): \A \ \text{is weakly $sp$-homogeneous}\}$ is $\mathbf{\Sigma}^0_6$ complete.  
\end{enumerate}
\end{theorem}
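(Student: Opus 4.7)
The plan is to deduce both boldface statements from the corresponding lightface theorem together with the structural characterization, Theorem \ref{thm:homogChar}. View $Mod(LO)$ as the closed subspace of $2^{\omega\times\omega}$ consisting of linear orders on $\omega$, equipped with the inherited Polish topology.

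For the upper bounds, I would unwind the characterization of $sp$-homogeneity from Theorem \ref{thm:homogChar} and count quantifiers directly over the atomic diagram of $\A$. Being a successor or predecessor of another element is a Boolean combination of atomic facts and hence $\Delta^0_1$; ``lying in a finite block of size exactly $n$'' is then $\Delta^0_2$, and ``lying in an infinite block'' is $\Pi^0_2$. The density, shuffle-sum, and interval conditions identified in the characterization climb the arithmetical hierarchy in a controlled way, and the full condition for $sp$-homogeneity can be written as a $\mathbf{\Pi}^0_5$ property of $\A$. For weak $sp$-homogeneity one prepends a $\Sigma^0_1$ existential quantifier over finite exceptional sets to an essentially $\mathbf{\Pi}^0_5$ condition on the quotient by that set, giving a $\mathbf{\Sigma}^0_6$ upper bound.

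For the lower bounds I would revisit, rather than replace, the computable construction used to prove the preceding lightface theorem. That construction takes as input a c.e.\ index $e$ coding a canonical $\Pi^0_5$-complete (respectively $\Sigma^0_6$-complete) predicate and outputs a linear ordering built in finite stages that depend monotonically on $W_e$. Re-read as a Turing functional $\Phi : 2^\omega \to Mod(LO)$ sending the oracle $X$ to the ordering $\Phi(X)$, this construction becomes a continuous map. Composing it with any continuous coding of a $\mathbf{\Pi}^0_5$- or $\mathbf{\Sigma}^0_6$-complete subset of Baire space as an oracle then yields the desired continuous reduction into the $sp$-homogeneous, respectively weakly $sp$-homogeneous, class. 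Together with the matching upper bound this establishes $\mathbf{\Pi}^0_5$- and $\mathbf{\Sigma}^0_6$-completeness of the two sets.

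The main obstacle is verifying that the lightface reduction is genuinely continuous in its input rather than merely uniformly computable in an index. Continuity requires that each finite piece of the output ordering be determined by a finite piece of the oracle and never revised. The coding by blocks, shuffle sums, and nested intervals suggested by Theorem \ref{thm:homogChar} is naturally monotone, which helps, but one must still check that every quantifier alternation in the target $\mathbf{\Pi}^0_5$ or $\mathbf{\Sigma}^0_6$ pattern is witnessed by a structural feature that can be \emph{built up} over stages rather than retroactively deleted. Should any step of the lightface construction implicitly use non-oracle information about $e$, such as bounds on halting times or totality assumptions, I would route around it by instead reducing from a universal $\mathbf{\Pi}^0_5$ (or $\mathbf{\Sigma}^0_6$) subset of Baire space and performing the same block-theoretic coding on it directly.
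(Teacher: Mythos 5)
Your proposal is a legitimate alternative route, and in fact the paper explicitly remarks at the start of Section~\ref{sec:Boldface} that ``uniform relativization of the approach in Section~\ref{sec:index} represents a possible approach to prove the theorems in this section,'' which is precisely what you are sketching. The paper, however, deliberately avoids that route and works in a way ``more native to the descriptive set-theoretic viewpoint.'' Concretely: the paper invokes the boldface pair-of-structures theorem of Ash--Knight (Theorem~\ref{thm:bfPoS}, that $\A\leq_\alpha\B$ iff $(Copies(\A),Copies(\B))$ is $(\mathbf{\Sigma}^0_\alpha,\mathbf{\Pi}^0_\alpha)$-hard), then hand-computes back-and-forth relations between two carefully chosen concrete orderings --- first $Sh(\omega+1)\leq_4 Sh(\omega)$, then $Sh(\omega+1)+Sh(\omega)\leq_5 Sh(\omega+1)$ --- to get $\mathbf{\Pi}^0_5$-hardness at once, since $Sh(\omega+1)$ is $sp$-homogeneous and $Sh(\omega+1)+Sh(\omega)$ is not. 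For $\mathbf{\Sigma}^0_6$, it proves Lemma~\ref{lem:LOsumCont} (continuity of taking $\L$-indexed sums of continuous families) and composes countably many of the $\mathbf{\Pi}^0_5$-hardness reductions along a copy of $\omega$. The tradeoff between the two approaches is real. Yours reuses the Section~\ref{sec:index} machinery, but the ``main obstacle'' you flag --- verifying that the lightface construction lifts to a genuinely continuous map --- is not negligible: the construction in Proposition~\ref{shnotisom} revises block membership of elements as $W_e$ is enumerated (``discarded elements'' are relocated), so the \emph{block} structure is not monotone; what is stable is the atomic $<$-diagram on the already-introduced elements, and that stability is what you would have to make explicit to get continuity. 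You would also need to recast the reduction as one from an arbitrary $\mathbf{\Pi}^0_5$ or $\mathbf{\Sigma}^0_6$ subset of a Polish space rather than from a fixed lightface-complete index set, which is routine but must be said. The paper's approach sidesteps all of this verification at the cost of fresh back-and-forth computations; it also yields the sharper ``pair'' form of hardness (between two fixed isomorphism types) which is strictly more information than boldface completeness of the target class. Your proposal as written is a plan, not a proof --- it names the gap it would have to close rather than closing it --- whereas the paper's route carries no such outstanding obligation.
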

\noindent Both versions of the complexity levels are explicitly laid out here because we offer two distinct perspectives on proving these sorts of results in our paper.
Of particular import in the boldface setting is the use of the back-and-forth technique.
Understanding the also-called back-and-forth relations is a way to understand the limitations of the Borel hierarchy (for more details see Chapter II in \cite{cst2}). This technique is also used extensively in the main categoricity classification result.

We should also note current paper \cite{GComb}, which analyzes the combinatorial content of $sp$-homogeneous orderings.
In particular, it is shown that $sp$-homogeneity is equivalent to a notion called $C_{\infty,\infty}$-homogeneity, which is homogeneity relative to an expansion of linear ordering by infinitely many additional relations.
By restricting which of these relations are allowed, strong homogeneity notions, also called $C_{n,m}$-homogeneity, are investigated and the number of $C_{n,m}$-homogeneous orderings is determined.
A full, detailed account can be found in \cite{GComb}.
The connections with finite enumerative combinatorics give another reason to be interested in $sp$-homogeneous orderings.

The paper is structured in five sections, including this one containing introductory material.
In Section 2, we structurally classify $sp$-homogeneous and weakly $sp$-homogeneous linear orderings.
In Section 3, we show that this classification is best possible from a computability-theoretic perspective using index sets.
Section 4 mirrors the third section and shows that our classification is the best possible from a boldface, descriptive set-theoretic perspective.
Lastly, in Section 5, we provide the analysis of categoricity level for $sp$-homogeneous orderings.

\section{Classifying Weakly $sp$-Homogeneous Linear Orderings}
We prove basic properties about $sp$-homogeneity and give a structural characterization of the $sp$-homogeneous and weakly $sp$-homogeneous linear orderings.

We begin with our characterization of the weakly \textit{sp}-homogeneous linear orderings $L$, starting with the \textit{sp}-homogeneous ones. Recall that $Bk(L)$ is the linear ordering of the blocks of $L$.
More formally, $Bk(L)$ is the quotient of $L$ by the convex equivalence relation identifying elements that are finitely far from each other.
We give the following preliminary definition regarding a fundamental construction of linear orderings.

\begin{definition}
    Given a countable set of linear orderings $A=\{A_1,A_2,\dots\}$ the shuffle sum of $A$, denoted $Sh(A)$, is obtained by partitioning $\eta$ into $|A|$ dense
sets $K_i$ and replacing each point in $K_i$ with a copy of $A_i$.
\end{definition}

It is a standard fact that the shuffle sum of a countable set of linear orderings is well-defined up to isomorphism.
This is confirmed by a back-and-forth argument extending the uniqueness of the dense linear ordering without endpoints.
We use several important properties regarding the sums of shuffle sums throughout the paper.
Let $\K$ be an arbitrary linear ordering and $P\in A$.
We have that
\[Sh(A)\cdot\K\cong(Sh(A)+P)\cdot K +Sh(A)\cong Sh(A).\]
These equations follow from the uniqueness of the shuffle sum.
The special case that $Sh(A)+Sh(A)\cong Sh(A)+P+Sh(A)\cong Sh(A)$ is particularly useful.

\begin{theorem} \label{ush} For any $A \subseteq \omega\cup\{\omega,\omega^*,\zeta\}$, the shuffle sum $L = (Sh(A),s,p)$ is $sp$-homogeneous. 
\end{theorem}

\begin{proof} Given finite sequences $a_0 < a_1 < \dots < a_k$ and $b_0 < b_1 < \dots < b_k$ of elements from $L$, the $s$ and $p$ functions will generate the blocks $[a_i]$ and $[b_i]$. Assume, without loss of generality, that these blocks are distinct. If these sets of blocks are isomorphic, then this is easily extended to an automorphism of $L$, since the intervals between any two distinct blocks are all isomorphic to $Sh(A)$. 
\end{proof}

We now show that our above example of a shuffle sum of blocks being $sp$-homogeneous is typical.
More specifically, the shuffle sums of blocks are the components of more complicated $sp$-homogeneous orderings.

\begin{theorem}\label{thm:homogChar}  Let $L$ be a linear ordering.
$L$ is $sp$-homogeneous if and only if there is a pairwise disjoint family $\{A,A_v, v \in V\}$ of subsets of $\omega \cup \{\omega,\omega^*,\zeta\}$ such that $L$ is the union of suborderings, as follows: 

\begin{enumerate}
\item For each $v \in V$, an open interval $I_v$ isomorphic to $Sh(A_v)$;

\item For each $a \in A$, a single block of size $a$.
\end{enumerate}
\bigskip
$L$ is \emph{weakly} $sp$-homogeneous if and only if it can be written as
\[L=L_1+B_1+\cdots+B_{k-1}+L_{k},\]
where each each $L_i$ is (possibly empty) $sp$-homogeneous and each $B_i$ is a single (non-empty) block.
\end{theorem}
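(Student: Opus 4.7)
The plan is to prove the two biconditionals in turn, starting with the $sp$-homogeneous case and then deducing the weakly $sp$-homogeneous case.

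For the backward direction of the $sp$-homogeneous characterization, I generalize Theorem \ref{ush}. Given finite tuples $a_0 < \cdots < a_k$ and $b_0 < \cdots < b_k$ whose finitely generated substructures are isomorphic, the functions $s$ and $p$ pick out the blocks $[a_i]$ and $[b_i]$, which must have matching types. Each such block lies either in a single $A$-block or in some $I_v \cong Sh(A_v)$, and pairwise disjointness of $A$ and the $A_v$'s forces matching pieces on the $\bar{a}$ and $\bar{b}$ sides. The absorption identities $Sh(A_v) + C + Sh(A_v) \cong Sh(A_v)$ (for $C$ of type in $A_v$) show that the interval between any two blocks of $\bar{a}$ lying in the same $I_v$ is again isomorphic to $Sh(A_v)$, so I can build an automorphism of $L$ extending the given map by piecing together isomorphisms on each maximal inter-block subinterval and on the boundary pieces.

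For the forward direction, suppose $L$ is $sp$-homogeneous. Call a block $[x]$ of $L$ \emph{rigid} if it is fixed setwise by every automorphism of $L$. Two distinct rigid blocks cannot share a type, since $sp$-homogeneity applied to singletons from each would yield an automorphism exchanging them. Let $A$ be the set of types of rigid blocks, let $\{I_v\}_{v \in V}$ be the maximal open intervals of non-rigid blocks (bounded by rigid blocks or the endpoints of $L$), and let $A_v$ be the set of block types appearing in $I_v$. Each $\mathrm{Aut}(L)$-orbit of non-rigid blocks lies entirely within a single $I_v$, since automorphisms preserve the intervals between rigid blocks; so the $A_v$'s are pairwise disjoint and disjoint from $A$. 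The main obstacle is the density claim: for any $[x] < [y] \in I_v$ and $a \in A_v$, some block of type $a$ lies strictly between $[x]$ and $[y]$. I plan to establish this using that $sp$-homogeneity applied to pairs $(x, w)$ of types $(\mathrm{type}([x]), a)$ makes the stabilizer of $[x]$ act transitively on the set of type-$a$ blocks greater than $[x]$, so this set cannot have a least element (the least would have to be fixed and simultaneously be moved, a contradiction). Combining this with the symmetric argument from $[y]$ and applying $sp$-homogeneity to triples $(x, y, w)$ rules out the possibility that all type-$a$ blocks above $[x]$ sit above $[y]$. Once density holds, the back-and-forth characterization of shuffle sums identifies each $I_v$ with $Sh(A_v)$.

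For the weakly $sp$-homogeneous case, suppose $\{a_1, \ldots, a_n\}$ witnesses weak $sp$-homogeneity of $L$. Let $B_1 < \cdots < B_{k-1}$ be the distinct blocks of $L$ containing these elements, and let $L_1, \ldots, L_k$ be the maximal open intervals of $L$ outside and between the $B_i$. Each $L_i$ is $sp$-homogeneous: an isomorphism between finitely generated substructures of $L_i$ extends by the identity on the other $L_j$ and each $B_l$ to an isomorphism between finitely generated substructures of $L$ fixing the exceptional set, which weak $sp$-homogeneity extends to an automorphism of $L$; this automorphism preserves each $B_j$ setwise and hence restricts to an automorphism of $L_i$. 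Conversely, given the decomposition $L = L_1 + B_1 + \cdots + B_{k-1} + L_k$ with each $L_i$ $sp$-homogeneous, I choose one representative from each $B_i$ to form the exceptional set. Any isomorphism between finitely generated substructures of $L$ fixing these representatives preserves each $B_i$ and each $L_j$ setwise, so restricts to isomorphisms on each $L_i$ that extend by $sp$-homogeneity and assemble into the required automorphism of $L$.
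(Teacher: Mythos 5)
Your backward direction for $sp$-homogeneity and both directions of the weak case are essentially correct and follow the paper's approach (using absorption identities for shuffle sums and fixing the exceptional blocks, respectively). The forward direction of the $sp$-homogeneity characterization, however, has a genuine gap: the decomposition you propose is too coarse.

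You define $I_v$ as a maximal open interval of non-rigid blocks, bounded by rigid blocks or the endpoints of $L$, and set $A_v$ to be the set of block types appearing in $I_v$. Such an $I_v$ need not be a single shuffle sum, and the density claim you want to prove --- that every $a \in A_v$ is realized between any $[x] < [y]$ in $I_v$ --- is simply false for this definition of $I_v$. Concretely, consider $L = Sh(\{1\}) + Sh(\{2\}) = \eta + 2\cdot\eta$, which \emph{is} $sp$-homogeneous (one can verify directly, or read off the theorem with $V = \{1,2\}$, $A_1 = \{1\}$, $A_2 = \{2\}$, $A = \emptyset$). Here no block is rigid, so your $I_v$ is all of $L$ with $A_v = \{1,2\}$; but $L \not\cong Sh(\{1,2\})$, and there is no singleton block between two $2$-blocks. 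Your stabilizer argument is vacuous there (the set of type-$1$ blocks above a $2$-block is empty), so it cannot rescue the density claim, and the planned identification $I_v \cong Sh(A_v)$ fails.

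The fix, and the paper's approach, is to define $I_v$ per block type $v$ occurring more than once, as the convex hull of the type-$v$ blocks: $I_v = \{y : \exists x < y < z \text{ with } t(x) = t(z) = v\}$. One first shows that there is no least or greatest type-$v$ block and that the type-$v$ blocks are dense in $I_v$ (here arguments of the kind you sketch do work); the crucial additional step is to prove that for \emph{any other} type $u$ appearing in $I_v$, the type-$u$ blocks are likewise dense in $I_v$ with no first or last element there, which forces $I_u = I_v$. This yields both the disjoint-or-equal dichotomy among the intervals and the identification $I_v \cong Sh(A_v)$. Your "maximal interval of non-rigid blocks" collapses exactly the distinction this per-type refinement is designed to preserve.
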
 

\begin{proof} 
We begin with the proof for $sp$-homogeneity.

First, suppose that $L$ has the described form and let $\phi: X \to Y$ be an isomorphism between two finitely generated subsets $X$ and $Y$ of $L$. 
For any $x$ with block type $b(x) \in A$, we must have $\phi(x) = x$. For any $v \in V$ and any $x \in I_v$, we must have $\phi(x) \in A_v$ and $\phi(x)$ must have the same block type as $x$, so that $\phi(x) \in I_v$. Thus $\phi$ acts as an isomorphism from $X \cap I_v$ to $X \cap I_v$.  Since $Sh(A_v)$ is 
$sp$-homogeneous by Theorem \ref{ush}, this may be extended to an automorphism 
$\alpha_v$ of $I_v$. Then the desired automorphism of $L$ is defined by letting $\alpha(x) = \alpha_v(x)$ when $x \in I_v$ where $X \cap I_v \neq \emptyset$, and $\alpha(x) = x$ for all other elements. 

\bigskip

Next, suppose that $L$ is $sp-$homogeneous.
Say that $Bk(L)$ is isomorphic to $W$, so $L$ has the form $\bigcup_{q \in W} b(q)$, where each $b(q)$ is a block of type $t(q)$.
The set $A$ is defined to be the set of block types occurring only once in $L$. 
Let $v = t(q)$ be one of the types occurring in $Bk(L)$. 

\begin{claim}
If $v$  appears more than once, then there is an open interval $I_v\subseteq W$ containing all $q$ such that $t(q) = v$. Moreover, the set of $q$ in $I_v$ with $t(q) = v$ is dense in $I_v$ and has no first or last element. 
\end{claim} 

\begin{proof}
First, we show that there is no least $x$ with $t(x) = v$.
To see this, suppose by way of contradiction that there is a least $x$ with $t(x) = v$ and let $b(y)$ be any other block in $I_v$ with $t(y) = v$. Then the map taking the block $b(x)$ to $b(y)$ cannot be extended to an automorphism of $L$, since $y$ has a block of size $v$ below it, and $x$ does not. A similar argument shows that there is no maximal $x$ with $t(x) = v$. Now suppose that $x_1 < x_2$ and $t(x_1) = v = t(x_2)$ but there is no $y$ in $(x_1,x_2)$ with $t(y) = v$. By the above argument, there is some $x_0 < x_1$ with $t(x_0) = v$. Then the map taking $b(x_0)$ to $b(x_1)$ and $b(x_2)$
to $b(x_2)$ cannot be extended to an automorphism, since there is a block of type $v$ between $b(x_0)$ and $b(x_2)$, but there is no such block between $b(x_1)$ and $b(x_2)$. 

The interval $I_v$ may be defined as the set of $y$ such that there are $x,z$ with block size $v$ such that $x < y < z$.
\end{proof}

Next, we show that the interval $I_v$ will, in fact, be a shuffle sum. The key here is that for any other block type $u$ appearing in $I_v$, we have that $I_u$ must equal $I_v$. 
Let $A_v$ be the set of types $t(q)$ such that $q \in I_v$. 
\begin{claim}\label{claim:spHomShuffle}
    For any type $u \in A_v$, the set of blocks of type $u$ in $I_v$ is dense in $I_v$ and has no first or last element.
    In particular, $I_v=Sh(A_v)$.
\end{claim}

\begin{proof}
     First, suppose that $y$ were the least element of $I_v$ of type $u$. Choose $x_0$ and $x_1$ of type $v$ from different blocks such that $x_0 < y < x_1$. Then the map taking $b(x_0)$ to $b(x_1)$ cannot be extended to an automorphism, since $x_1$ has a block of size $u$ below it, but $x_0$ does not. A similar argument shows that there is no greatest element of type $u$ in $I_v$.
     Suppose that $y_0 < y_1$ from distinct blocks in $I_v$ have block type $u$, but there is no $y \in (y_0,y_1)$ of block type $u$. Choose $x_0 < y_0$ and $x_1 \in (y_0,y_1)$ of block type $v$. Then the map taking $x_0$ to $x_1$ and $y_1$ to $y_1$ cannot be extended to an automorphism, since there is an element of block type $u$ in $(x_0,y_1)$ but there is no such element in $(x_1,y_1)$. Finally, we need to check that for any $x_0 < x_1$ in $I_v$ and any type $u \in A_v$, there is an element of type $u$ in $(x_0,x_1)$.  Suppose that $x_i$ has block type $u_i$ for $u=0,1$. By the previous argument, there is an element $y_1$ of block type $u$ greater than $x_1$. Then there is also an element $z$ of block type $u_1$ greater than $y_1$. Consider the isomorphism mapping $x_0$ to $x_0$ and $x_1$ to $z$. If there were no element of block type $u$ in $(x_0,x_1)$, then this could not be extended to an automorphism, since there is an element of type $u$ in $(x_0,z)$. This completes the proof that $I_v$ is a shuffle sum of $A_v$.
\end{proof}

Claim \ref{claim:spHomShuffle} completes the proof of the theorem for the case concerning $sp$-homogeneous orderings.
We now need only prove the associated claim for weak $sp$-homogeneity. 
Let $L$ be weakly $sp$-homogeneous over the exceptional set $\bar{p}=p_1<\cdots<p_r$.
By convention, let $p_0=-\infty$ and $p_{r+1}=\infty$.
Let $(p_i,p_{i+1})_B$ denote the set of elements whose block is strictly between the block of $p_i$ and that of $p_{i+1}$
For the sake of contradiction, say that some interval $(p_i,p_{i+1})_B$ is not $sp$-homogeneous.
This is witnessed by some offending pair $\bar{a},\bar{b}$ which generate the same structure yet are not automorphic.
That said, any automorphism of $L$ fixing $\bar{p}$ must restrict to an automorphism of each $(p_i,p_{i+1})_B$.
Therefore, $\bar{a},\bar{b}$ are not automorphic over $\bar{p}$ in $L$, yet they still generate the same structure over $\bar{p}$, a contradiction to the fact that $L$ is weakly $sp$-homogeneous over the exceptional set $\bar{p}=p_1<\cdots<p_r$.
This means that each $(p_i,p_{i+1})_B$ is $sp$-homogeneous, yielding the desired structural form for $L$.

On the other hand, say that $L=L_1+B_1+\cdots+B_{k-1}+L_{k}$ where each $L_i$ is $sp$-homogeneous and each $B_i$ is a single block.
Take a single element from each $B_i$ as the exceptional set $\bar{p}$.
Consider two tuples $\bar{a}$ and $\bar{b}$ that generate the same structure over $\bar{p}$.
Write $\bar{a}=\bar{a}_1,\dots,\bar{a}_k,\bar{a}'_1,\dots,\bar{a}'_{k-1}$ where each $\bar{a}_i\in L_i$ and $\bar{a}'_i\in B_i$.
Write $\bar{b}=\bar{b}_1,\dots,\bar{b}_k,\bar{b}'_1,\dots,\bar{b}'_{k-1}$ similarly.
By assumption, for each $i$, $\langle\bar{a}_i\rangle\cong\langle\bar{b}_i\rangle$, and as there is a fixed element in each $B_i$, $\bar{a}'_i=\bar{b}'_i$.
By $sp$-homogeneity, there is an automorphism $\sigma_i$ for $L_i$ taking $\bar{a}_i$ to $\bar{b}_i$.
Taken together, these yield an automorphism of $L,\bar{p}$ that takes $\bar{a}$ to $\bar{b}$.
In particular, $L$ is weakly homogeneous over the exceptional set $\bar{p}$ as desired.
\end{proof}

Here is a more explicit way to visualize these $sp-$homogeneous linear orders. 

\begin{proposition} \label{prop:spVisual} Let $\{A,A_v, v \in V\}$ be a pairwise disjoint family  of subsets of $\omega \cup \{\omega,\omega^*,\zeta\}$ and let $\sM$ be an arbitrary countable linear ordering.  Then an $sp-$homogeneous linear ordering $\sL$ as described in Theorem \ref{thm:homogChar} may be obtained by replacing each element of $\sM$ with either an open interval of type $I_v$, for $v \in V$, or a single block of size $a$, for $a \in A$, in such a way that
\begin{enumerate}
\item Every interval $I_v$ with $v\in V$, and every block of size $a$ with $a \in A$, occur exactly once;
\item No finite block is immediately followed by another finite block or a block of type $\omega$;  
\item A block of type $\omega^*$ does not immediately precede a finite block or a block of type $\omega$.
\end{enumerate}
\end{proposition}

\begin{proof}  It is immediate that $\sL$ as described above satisfies the condition given in Theorem \ref{thm:homogChar}.
\end{proof}

In particular, if $\sM$ is $\eta$,  then the components of $\sL$ could be placed arbitrarily. $\sM = \omega$ could be used as long as the $V$ is infinite, so that we never have to put two of the single blocks next to each other.  

On the other hand, given any $sp$-homogeneous linear ordering with corresponding $\{A,A_v, v \in V\}$, an ordering $\sM$ as in Proposition \ref{prop:spVisual} can be defined as follows. The elements of $\sM$ will consist of a representative $c_v$ from each interval $I_v$ and a representative $c_a$ from the block of type $a$ for each $a \in A$. The ordering on these representatives is inherited from $\sL$.

We note there is a similarity of this result with the classification of homogeneous colored linear orderings found in \cite{ST08} Lemma 3.1.
This similarity is not a coincidence, and it is explored thoroughly in \cite{GComb}.

In the following two results, we refer to the operation of a ``separated sum'' of linear orderings.  Given linear orderings $L,K$, the separated sum $L \oplus K$ is $L + 1 + K$.

\begin{corollary}  Every relatively $\Delta^0_2$ categorical linear ordering is weakly homogeneous as an $sp$-linear ordering. 
\end{corollary}

\begin{proof} Each relatively $\Delta^0_2$ categorical ordering is a finite, separated sum of orderings of the following types: $n$ for finite $n$, $\omega$, $\omega^*$, $\omega+\omega^*$, and $n\cdot \eta$ by the main result in \cite{McCoy2002}. The first three components are just single blocks, and $n \cdot \eta = Sh(\{n\})$.
\end{proof}

Note here that if we only had the successor function, and not the predecessor function, this result would not hold for $\omega$, since then we would have isomorphisms from $\omega$ to the substructure $[n,\infty)$ which could not be extended to an automorphism. The result will still hold for $\zeta$. 

For scattered linear orderings, those that do not accept an embedding from $\eta$, the converse of the above corollary holds.
This is because any scattered weakly $sp-$homogeneous ordering is necessarily a finite sum of blocks by the above characterization.
In other words, for scattered orderings, being relatively $\Delta_2$ categorical is just the same as being weakly $sp$-homogeneous.
We will see many examples of this correspondence failing in the non-scattered case throughout this article.

The following corollary limiting the scope of weak $sp$-homogeneity can be seen as a result of the structure theorem proven above.
We provide a direct proof of the fact as well for maximal clarity.

\begin{corollary}\label{cor:BkHasFinSuccessors}
    Let $L$ be a linear ordering such that $Bk(L)$ has infinitely many successors. Then $L$ is not weakly $sp$-homogeneous.
\end{corollary}

\begin{proof} Let $(a_0,b_0), (a_1,b_1), \dots$ be an infinite set of successor blocks in $L$. For any $i$,  either $a_i$ is infinite or $b_i$ is infinite.  Assume, without loss of generality, that infinitely many $a_i$ are infinite.  Since there are only three infinite types: $\omega$, $\omega^*$, and $\zeta$, there must be infinitely many of the same type, say $\omega$.  Given a fixed finite set of parameters, choose successor blocks $(c_0,d_0) < (c_1,d_1)$ greater than any of the parameters with each $c_i$ of type $\omega$. Consider the isomorphism mapping $c_0$ to $c_1$ and $d_1$ to $d_1$. 
This cannot be extended to an automorphism since there is a block between $c_0$ and $d_1$, but there is no infinite block between $c_1$ and $d_1$. 
\end{proof}

This provides ready examples of well-behaved linear orderings that are not weakly $sp$-homogeneous.
In particular, the following are a well-studied class of orderings.

\begin{definition}
    Given $f:\omega\to\omega$ the $\zeta$ representation $Z_f$ is given by
    \[Z_f=\sum_\omega\zeta+f(i).\]
\end{definition}

It can be directly observed that $Bk(Z_f)\cong\omega$ no matter the choice of $f$.
The following is immediate.

\begin{corollary} \label{wspcor} 
No $\zeta$ representation is weakly $sp$-homogeneous.
\end{corollary}

The notion of Hausdorff rank is important in the study of linear orderings. 
We define the quotient ordering $Bk(\sA)$ of a linear ordering $\sA$  to be the ordering on the set of blocks of $\sA$ with the inherited ordering $[a] \leq [b] \iff a \leq b$.
The block quotient may be iterated by letting $Bk^0(\sA) = \sA$
and $Bk^{n+1}(\sA) = Bk(Bk^n(\sA))$.
This quotient can also be iterated transfinitely with a bit more care.

\smallskip

The \emph{Hausdorff rank} of $\sA$ is the least $\alpha$ such that $Bk^{\alpha+1}(\sA) = Bk^\alpha(\sA)$
We observe that the linear orderings of Hausdorff rank 0 are exactly $\eta$, $1+\eta$, $\eta+1$, $1+\eta +1$, and $1$.
From this perspective, $sp-$homogeneous orderings cannot be too complicated.

\begin{proposition} 
Every weakly $sp-$homogeneous linear ordering $\sL$ has Hausdorff rank $\leq 2$. 
\end{proposition}

\begin{proof} By Corollary \ref{cor:BkHasFinSuccessors}, $Bk(\sL)$ has only finitely many successors.
This means that $Bk(\sL)$ is finite, or it is a finite sum of finite orderings and copies of $\eta$.
In the former case, $Bk^2(\sL)=1$.
In the latter case, say $Bk(\sL)=n_1+\eta+n_2+\cdots+\eta+n_k$.
Then $Bk^2(\sL)= \eta+1+\cdots+1+\eta\cong\eta$ possibly with added end points.
Therefore, $Bk^2(\sL)$ has rank $0$, and $\sL$ has rank at most $2$.
\end{proof}

\section{Index Sets} \label{sec:index}

Our goal in this section is to calculate the index set of the computable $sp$-homogeneous and weakly $sp$-homogeneous linear orderings.
This is done using entirely hands-on, effective methods.
We prove that the set of $sp$-homogeneous linear orderings is $\Pi_5^0$ complete while the set of weakly of $sp$-homogeneous linear orderings is $\Sigma_6^0$ complete.
We revisit this type of problem in Section \ref{sec:Boldface} using the methods of descriptive set theory to obtain a boldface analog of these results.
We delay any discussion of the more abstract tools used for those methods until reaching that section.
We begin with an argument about the complexity of isomorphisms between shuffle sums.

We note here that there are some $sp$-homogeneous linear orderings that are computationally complicated.
We show below that there are two isomorphic computable shuffle sums that have no $\Delta_3$ isomorphism between them.
We study the complexity of isomorphisms between $sp$-homogeneous orderings in detail in Section 5.
That said, we present this argument here because the techniques are relevant to index set calculations.
The non-$\Delta_3$ isomorphism of certain $sp$-homogeneous structures turns out to be closely related to demonstrating the hardness of certain index sets of shuffle sums.

\begin{proposition} \label{shnotisom} 
There are two computable copies of $Sh(\omega + 1)$ which are not $\Delta^0_3$ isomorphic.
\end{proposition}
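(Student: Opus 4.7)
The plan is to construct two computable copies $\A$ and $\B$ of $Sh(\omega+1)$ and show that no isomorphism $f:\A\to\B$ can be $\Delta_3^0$, by making the $\omega$-block structure of $\A$ easy to decide while encoding a set of higher arithmetical complexity into the $\omega$-block structure of $\B$.

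For $\A$ I would build a standard computable copy of $Sh(\omega+1)$ in which the set $\{a\in\A:a\text{ lies in an }\omega\text{-block}\}$ is computable. This is a routine effective back-and-forth construction: list the block bases of $\A$ computably, label each in advance as either a finite size or the symbol $\omega$, and place every label densely.

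For $\B$, fix a $\Pi_3^0$-complete set $S\subseteq\omega$, written as $n\in S\iff\forall s\,\exists t\,\forall u\,\psi(n,s,t,u)$ for a computable predicate $\psi$. For each $n$ designate a block base $b_n\in\B$, and arrange the construction so that the block above $b_n$ contains at least $s$ successors iff the $\Sigma_2^0$ statement $\exists t\,\forall u\,\psi(n,s,t,u)$ holds. This is obtained via the standard double-limit approximation: a candidate witness $t(n,s)$ is maintained, reset to the next value whenever $\psi(n,s,t(n,s),u)$ fails for a new $u$, and used to drive monotone extensions of the block so that the $s$-th successor exists in the limit iff the candidate eventually stabilizes. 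Consequently the block of $b_n$ is $\omega$-type iff every $s$ stabilizes, iff $n\in S$. In parallel we add dense ``buffer'' blocks of every finite size and of type $\omega$ so that $\B\cong Sh(\omega+1)$ regardless of which $b_n$'s grow infinitely. This is the main technical step: reconciling the $\Pi_3^0$ encoding with the density conditions on a single computable linear ordering is where the real work lies.

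Finally, suppose toward contradiction that $f:\A\to\B$ is a $\Delta_3^0$ isomorphism. Given $n$, use $0''$ to compute $a_n:=f^{-1}(b_n)$, permissible since $f^{-1}$ is $\Delta_3^0$. Then decide computably whether $a_n$ lies in an $\omega$-block of $\A$; by the isomorphism property this agrees with ``$b_n$ lies in an $\omega$-block of $\B$'', which by our construction agrees with $n\in S$. Hence $S\leq_T 0''$, so $S\in\Delta_3^0$, contradicting the $\Pi_3^0$-completeness of $S$.
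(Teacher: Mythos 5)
Your proposal takes essentially the same route as the paper: build one standard copy in which the $\omega$-block membership is computable, and another copy in which the map $n\mapsto[\,\text{block of }b_n\text{ is of type }\omega\,]$ realizes a $\Pi^0_3$-complete predicate, then observe that a $\Delta^0_3$ isomorphism together with the decidable $\omega$-block test on the standard side would put that predicate in $\Delta^0_3$. The paper instantiates this with the specific $\Sigma^0_3$-complete set $Cof$ and spells out the bookkeeping for shrinking a block by ejecting elements into a buffer; your version uses the abstract $\forall\exists\forall$ normal form, and you should make the $\Sigma^0_2$ conditions monotone in $s$ and describe where the ejected successors go, but these are the same technical moves the paper makes explicit.
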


\begin{proof}  Let $L_0$ be a standard copy, where $\{x: |b(x)| = k\}$ is computable for each $k \leq \omega$. We will construct a copy $L$
such that  $\{x: |b(x)| = \omega\}$ is $\Pi^0_3$ complete. It follows that there is no $\Delta^0_3$ isomorphism from $L_0$ to $L$. The linear ordering $L$ is constructed as follows. The universe of $L$ may be viewed as a subset of $B \times \omega$,
where $B = Bk(L) = \bQ \cap (0,1)$.  We will define a computable reduction of the $\Sigma^0_3$ complete set $Cof = \{e: W_e\ \text{is cofinite}\}$ to $L$, so that 
the block $[f(e)]$ has finite type if and only if $e \in Cof$. 
Let $q_e = 2^{-e-1}$ and $a_e = \la q_e,0 \ra$. The reduction is defined so that $f(e) = a_e$. We will assume that at any stage, at most one element comes into any $W_e$. The construction is in stages as follows. 

At stage 0, each $[a_0] = \{a_0\}$ and in general $[\la q,0\ra] = \{\la q,0\ra\}$.

After stage $s$, we have, for each $e$, a value $n = n_{e,s}$ and a finite sequence $[a_e] = \{a_e, a^s_{e,1} < \dots < a^s_{e,n-1}\}$ where $n_{e,s}$ is our current guess at the least $n$ such that every $m>n$ belongs to $W_e$ and 
$n -1$ does not belong to $W_{e,s}$. Also we let $m_{e,s}$ be the largest such that $\{n_{e,s},n_{e,s}+1,\dots,m_{e,s}\}$ all belong to $W_{e,s}$.  For each interval $(q_{e+1},q_e)$, as well as the interval $(q_0,1)$, we have begun to construct a copy of $Sh(A)$. 

At stage $s+1$, we continue to build the copies of $Sh(A)$ on each interval. 
In other words, we add blocks of all sizes between each block that we have already built.
For the block $[a_e]$, we look to see if a new element $m$ has come into $W_e$ at stage $s+1$. If no element comes into $W_e$, then we extend the block $[a_e]$ by adding the next available element of the form $\la q_e, x \ra$.

There are four cases when a new element $m$ comes into $W_e$. 

(1) The first case is when $m = m_{e,s}+1$, giving us more evidence that $n_{e,s}$ might indeed be the (least) $n$ such that all $m \geq n$ belong to $W_e$. Then $n_{e,s+1} = n_{e,s}$ and we leave the block $[a_e]$ as it is. 

\medskip

(2) Next, suppose that $m = n_{e,s} - 1$. Then $n = n_{e,s+1}$ will be the least such that $n,n+1,\dots,m$ all belong to $W_{e,s+1}$. In this case, the block of $a_e$ becomes $\{a_e, a^s_{e,1} < \dots < a^s_{e,n-1}\}$. The discarded elements $a^s_{e,n}, \dots, a^s_{e,n_{e,s}}-1$ are sent to one of the blocks being built in the interval $(q_{e+1},q_e)$. 

\medskip 

(3) Next, suppose that $m < n_{e,s} - 1$.  Then $n = n_{e,s+1}$ will be the least such that $n,n+1,\dots,m$ all belong to $W_{e,s+1}$. So $m_{e,s+1}$ will be the largest $k$ such that $n,n+1,\dots,k$ all belong to $W_{e,s+1}$. The block of $a_e$ will now be $a_e,\dots,a^s_{e,n}-1$ and as in case (2) the discarded elements $a^s_{e,n}, \dots, a^s_{e,n_{e,s}}-1$ are put to work in $(q_{e+1},q_e)$.

\medskip

(4) Finally, suppose that $m > m_{e,s}+1$. Then we let $n_{e,s+1} = m$ and we let  $m_{e,s+1}$ be the largest such that 
$\{n_{e,s},n_{e,s}+1,\dots,m_{e,s+1}\}$ all belong to $W_{e,s}$. The block of $a_e$ at stage $s+1$ will now be 
$\{a_e, a^s_{e,1} < \dots < a^s_{e,n_{e,s}-1}\}$ together with the next $m - n_{e,s}$ available elements in $\{q_e\} \times \omega$. 

\medskip

Now we verify that this construction works as desired. 
We observe that if $n$ is not the least such that all $m \geq n$ belong to $W_e$, then $n = n_{e,s}$ only finitely many times. 

First, suppose that $W_e$ is cofinite and let $n$ be the least such that every $m\geq n$ belongs to $W_e$. Let $s$ be large enough so that $n_{e,t} \geq n$ 
for all $t \geq s$.  It follows from the construction that $[a_e] =  
\{a_e < a^s_{e,1} < \dots < a^s_{e,n-1}\}$.
In particular, once all of the elements less than $n$ are enumerated into $W_e$ and some next element in the interval above $n$ is enumerated, each of the elements $a_e < a^s_{e,1} < \dots < a^s_{e,n-1}$ forever stays inside of the block $[a_e]$.
Furthermore, consider any larger element added to the block $[a_e]$.
Eventually, some new element $m>n$ is enumerated into $W_e$ and is proposed as a new witness to cofiniteness.
This witness is eventually defeated when each of the elements in $[n,m]$ is enumerated into $W_e$, so eventually (either by the action in Case (2) or (3)) the larger elements are no longer in the block $[a_e]$.

Next, suppose that $W_e$ is not cofinite. Then for every $n$, we have $n = n_{e,s}$ only finitely often. Given $k \in \omega$, let $s$ be a stage such that, for any $t \geq s$, we never have $n_{e,t} < k$. Then, for all stages $t > s$, $[a_e]$ contains the initial sequence $\{a_e, a^s_{e,1}, \dots, a^s_{e,k}\}$. It follows that at the end of the construction,  $[a_e]$ will be a block of type $\omega$, 
that is, $\{a_e < a^s_{e,1} < \cdots\}$ form a block.
\end{proof}

We observe that the proof of this theorem shows that given any $\Sigma^0_3$ relation $R(x)$, we can uniformly and computably build shuffle sums $L(x)$ such that a fixed block in $L(x)$ is finite when $R(x)$, and has type $\omega$ when $\neg R(x)$. 
This will be needed below.

Some preliminary facts regarding index sets and linear orderings are needed before moving on to the proof of our main theorem for this section. Recall $x \sim y$ if $x$ and $y$ belong to the same block, that is, there is a sequence of successors leading $x$ to $y$ or from $y$ to $x$. We write $[x] < [y]$ to mean that $x < y$ and $\neg x \sim y$. 

\begin{proposition} \label{block1} For any computable linear ordering, and any $n\in\omega$:
\begin{enumerate}
\item $\{\la x,y \ra: y = succ(x)\}$ is $\Pi^0_1$;
\item $\{\la x,y\ra : x \sim y\}$ is $\Sigma^0_2$; 
\item $\{(x,y): [x] < [y]\}$ is $\Pi^0_2$.
\item $\{x: |[x]| \geq n\}$ is $\Sigma^0_2$;
\item $\{x: |[x]| = n\}$ is $D^0_2$, that is a conjunction of $\Sigma^0_2$ and $\Pi^0_2$ sets, and hence is $\Delta^0_3$;
\item $\{x: [x] \cong \omega\}$, $\{x: [x] \cong \omega^*\}$, and
  $\{x: |[x] \cong \zeta\}$ are all $\Pi^0_3$.
\end{enumerate}
\end{proposition}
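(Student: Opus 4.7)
The plan is to verify each clause by writing down a natural defining formula and reading off its position in the arithmetical hierarchy by counting quantifier alternations.

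For (1), $y = succ(x)$ expresses as $x < y \wedge \forall z\,(z \leq x \vee z \geq y)$, which is $\Pi^0_1$. For (2), $x \sim y$ iff there is a finite alternating successor-chain $x = z_0, z_1, \ldots, z_k = y$ whose consecutive terms are successor-related in one direction or the other; this is an existential over a finite conjunction of $\Pi^0_1$ clauses from (1), and thus $\Sigma^0_2$. Item (3), $[x] < [y]$, is $x < y \wedge \neg(x \sim y)$, which is $\Pi^0_2$. For (4), $|[x]| \geq n$ is $\exists y_1 < \cdots < y_n\, \bigwedge_i y_i \sim x$, remaining $\Sigma^0_2$ since a finite conjunction of $\Sigma^0_2$'s is $\Sigma^0_2$. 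For (5), $|[x]| = n$ is the conjunction of (4) with the negation of (4) at $n+1$, giving a $D^0_2$ (and so $\Delta^0_3$) formula.

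For (6), I use the fact that any block of a linear ordering has one of four order types: finite, $\omega$, $\omega^*$, or $\zeta$. The $\zeta$ case is cleanest: $[x] \cong \zeta$ iff $[x]$ has neither a least nor a greatest element. ``No least'' expands to $\forall y\,[\neg(y \sim x) \vee y\text{ has a predecessor}]$, whose matrix is $\Pi^0_2 \vee \Sigma^0_2$. Because the bound variables in the two disjuncts are independent, the $\Sigma^0_2$ disjunct's existential can be pulled across the disjunction, collapsing the matrix into a $\forall\exists\forall$ block and hence $\Pi^0_3$; the outer $\forall y$ preserves this. The same argument handles ``no greatest'', so $\{x : [x] \cong \zeta\}$ is the intersection of two $\Pi^0_3$ sets and hence $\Pi^0_3$.

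The main obstacle is (6) for $\omega$ and $\omega^*$. The direct characterization ``infinite, has a least element, no greatest element'' combines a $\Pi^0_3$ infiniteness clause and a $\Pi^0_3$ no-greatest clause with ``has a least element'', whose natural formulation $\exists y\,(y \sim x \wedge y\text{ has no predecessor})$ is only $\Sigma^0_3$, so a naive estimate gives only $\Delta^0_4$. My plan is to recharacterize $\omega$ negatively by excluding the other three block types: $[x]$ is not finite (a conjunction over $n$ of $\neg(|[x]| = n)$, which is $\Pi^0_3$), is not of type $\zeta$, and is not of type $\omega^*$, together with the $\Pi^0_3$ ``no greatest'' condition; the $\omega^*$ case is handled symmetrically. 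The technical hurdle is arranging the quantifier prefixes of these clauses so that they align and combine into a single $\Pi^0_3$ formula rather than climbing the hierarchy. I expect this quantifier bookkeeping to be the most delicate step of the proposition.
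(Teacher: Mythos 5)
Your treatment of (1)--(5) is correct and essentially identical to the paper's: the paper defines $y = succ(x)$ by $x<y \wedge \forall z\,\neg(x<z<y)$, defines $\sim$ by an existential over finite chains of successors, and handles (3)--(5) exactly as you do (except the paper phrases (4) more directly as the existence of $n-1$ successors/predecessors of $x$, avoiding the extra pass through $\sim$). Your argument for the $\zeta$ clause of (6) also matches the paper's intent: it is the conjunction of ``arbitrarily long successor chains from $x$'' and ``arbitrarily long predecessor chains from $x$'', each $\Pi^0_3$, so the conjunction is $\Pi^0_3$.

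The gap you flag in the $\omega$ and $\omega^*$ clauses of (6) is real, and your proposed exclusion-based repair does not close it. ``$[x]$ is not finite'' is indeed $\Pi^0_3$, but ``$[x]\not\cong\zeta$'' is the negation of a $\Pi^0_3$ set, i.e.\ $\Sigma^0_3$, and ``$[x]\not\cong\omega^*$'' is exactly the dual of the problem you are trying to solve; so the conjunction you would form is still $\Pi^0_3 \wedge \Sigma^0_3$, which is no better than the direct approach. In fact the paper's own one-line argument has the same flaw: ``the non-existence of some finite length successor or predecessor chain from $x$'' is $\Pi^0_2$ only for a \emph{fixed} length $n$; expressing ``$[x]$ has a least element'' requires an existential over $n$, which pushes this clause to $\Sigma^0_3$. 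A witness that the bound cannot be improved: take the construction of Proposition~\ref{shnotisom} (where $[a_e]$ is finite iff $e\in\mathrm{Cof}$, of type $\omega$ otherwise), reverse it so the block is finite iff $e\in\mathrm{Cof}$ and of type $\omega^*$ otherwise, and append a copy of $\omega$ above $a_e$; in the resulting computable linear order, $[a_e]\cong\omega$ iff $e\in\mathrm{Cof}$, which is $\Sigma^0_3$-complete, so $\{x:[x]\cong\omega\}$ is not $\Pi^0_3$. The correct upper bound for the $\omega$ and $\omega^*$ clauses is $D^0_3 = \Pi^0_3\wedge\Sigma^0_3$ (the paper's own notation from clause (5)); this still lies inside $\Delta^0_4$ and so does not disturb the $\Pi^0_4$ and $\Pi^0_5$ bounds proved later, but you should not expect your ``quantifier bookkeeping'' to recover $\Pi^0_3$, because it is not available.
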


\begin{proof} (1) $y = succ(x) \iff x < y\ \land (\forall z) \neg (x<z<y)$.

\medskip

(2) $x \sim y$ if there is a finite sequence $z_1 < z_2 < \dots < z_n$ 
such that each $z_{i+1} = succ(z_i)$ and either $x = z_1$ and $y = z_n$ or $x = z_n$ and $y = z_1$. 

(3) is immediate from (2). 

(4) follows from (1) directly by asserting the existence of $n-1$ successors or predecessors. 

(5) is immediate from (4)

(6) comes down to asserting the existence of arbitrarily long successor or predecessor chains from $x$ (which can be done in a $\Pi^0_3$ manner) and the non-existence of some finite length successor or predecessor chains starting from $x$ (which can be done in a $\Pi^0_2$ manner).
\end{proof}

As usual, let $\phi_e$ be the $e^{th}$
 partial computable function. We say that $\phi_e$ defines a linear ordering $L_e$ when $\phi_e$ is total and is the characteristic function of a relation $L_e \subset \omega \times \omega$.  
Now let $LI = \{e: \phi_e\ \text{defines a linear ordering}\}$.
The following are several basic completeness results that are needed for our final construction. 
Their proofs are included here as they are not listed fully in our desired form in the literature.

\begin{proposition} \label{Le1} Let $n>1$ be finite.
\begin{enumerate}
\item $LI$ is a $\Pi^0_1$ set.
\item $\{e: L_e \cong \eta\}$ is $\Pi^0_2$ complete.
\item  $\{e: L_e \cong n \cdot \eta\}$ is $\Pi^0_3$ complete.
\item $\{e \in LI: L_e \cong \omega \cdot \eta \}$,  $\{e \in LI: L_e \cong \omega^* \cdot \eta \}$, and $\{e \in LI: L_e \cong \zeta \cdot \eta\}$ are all $\Pi^0_4$ complete.
\item $\{e \in LI: L_e \cong \omega\}$,  $\{e \in LI: L_e \cong \omega^*\}$, and $\{e \in LI: L_e \cong \zeta\}$ are all $\Pi^0_3$ complete.
\end{enumerate}
\end{proposition}

\begin{proof} The upper bounds on the complexity are easy to see. 
We will just show that  $\{e: L_e \cong n \cdot \eta\}$ is $\Pi^0_3$.
That is, for $e \in LI$, $L_e \cong n \cdot \eta$ if and only if every block size is $n$, and 
\begin{itemize}
\item for all $x$, $|[x]| = n$;
\item for any $x$, there exist $y_1 < y_2 < y_3 < x$;
\item for any $x$, there exist $y_1 > y_2 > y_3 > x$;
\item for any $x_0 < x_1$, there exist $y_1,y_2,y_3$ such that 
$x < y_1 < y_2 < y_3 < x_1$. 
\end{itemize}

\medskip

For the completeness:

(2) We obtain a reduction $f$ from the $\Pi^0_2$ complete set $Inf = \{e: W_e\ \text{is infinite}\}$ so that $L_{f(e)} \cong \eta$ if $e \in Inf$,and $L_{f(e)} \cong \zeta$ if $e \notin Inf$. Initially, $L = L_{f(e)} = \{0\}$. After stage $s$, we will have a finite ordering $L^s = a_0^s < a_1^s < \dotsb < a_{n_s -1}$ of size $n_s$ with domain $n_s$.  At stage $s+1$, there are two cases.

\begin{itemize}
    \item If no new element is enumerated into $W_e$, then we add $n_s$ on the left of $L^s$,  and we add $n_s+1$ on the right. Thus $n_{s+1} = n_s+2$.
    \item If a new element is enumerated into $W_e$, then we add $n_s$ on the left of $L^s$,
    we add $n_s+i+1$ between $a_i^s$ and $a_{i+1}^s$, and we add $2n_s$ on the right. Thus $n_{s+1} = 2n_s+1$. 
\end{itemize}

We have that $L = L_{f(e)}$ is the union of the orders $L^s$. If $W_e$ is finite, then clearly $L_{f(e)} \cong \zeta$. If $W_e$ is infinite, then the construction ensures that $L$ is dense and without endpoints, so that $L_{f(e)} \cong \eta$.

(3) We use the $\Pi^0_2$ completeness of $Inf$. Given an arbitrary $\Pi^0_3$ predicate $Q(e) \equiv (\forall m) P(e,m)$, where $P$ is $\Sigma^0_2$, there is a c.e.\ relation $R$ such that $P(e,m)$ if and only if $\{x: R(e,m,x)\}$ is finite. Here we may assume that if $\neg Q(e)$, then $\neg P(e,m)$ for co-finitely many $m$. 

It suffices to define a computable function $\phi$ so that, for all $e$, $Q(e)$ if and only if $L_{\phi(e)} \cong n \cdot \eta$.

Then $L_{\phi(e)}$ is defined as the sum $L_0 + n + L_1 + n +  \dotsb$, where for each $m$, $L_m \cong \eta$ if $P(e,m)$ and $L_m \cong n\cdot \eta$ if $\neg P(e,m)$. Thus, if $Q(e)$, then each $L_m \cong n \cdot \eta$, so that $L_{\phi(e)} \cong n \cdot \eta$, and otherwise at least one of the $L_m \cong \eta$, so that $L_{\phi(e)}$ is not isomorphic to $n \cdot \eta$. 

For the construction of the component $L_m$, we are building a copy of $n \cdot \eta$, but every time a new element $x$ satisfies $R(e,m,x)$, we break up our size $n$ blocks $y_1 < y_2 < \dots < y_n$ into $n$ blocks of size $n$ by starting to build a copy of $n \cdot \eta$ between each successor pair and adding new elements to expand each $y_i$ to a block of size $n$. Thus, if $P(e,m)$, then this expansion only happens finitely often, so that $L_m \cong n \cdot \eta$. 
If $\neg P(e,m)$, then the expansion happens infinitely often, so that no elements of $L_m$ will have (permanent) successors, and thus $L_m \cong \eta$, so that $L \cong n \cdot \eta + n + n \cdot \eta + \dotsb + n + n \cdot \eta + n + \eta + n + \eta + \cdots\not\cong n\cdot\eta$.

\medskip

(4) Let $P(x) \equiv (\forall a)R(a,x)$, where $R$ is $\Sigma^0_3$. We use the technique from the proof of Proposition \ref{shnotisom} to build $L_{f(x)}$ as a dense family of blocks. Each block will have the form $M_0 + M_1 + \cdots$, where $M_a$ is used to check whether $R(a,x)$. The component $M_a$ will have finite size if $R(a,x)$, and it will have type $\omega$ if $\neg R(a,x)$. As in the proof of Proposition \ref{shnotisom}, we use the $\Sigma^0_3$ completeness of $Cof$ to define $M_a$ so that:

\begin{itemize}
\item 
 When $R(a,x)$, $M_a$ will have finite size $n_e$, where $n_e$ is the least $n$ such that all elements $\geq n$ belong to the appropriate c.e.\ set, and 

\item When $\neg R(a,x)$, $M_a$ will have type $\omega$. 
\end{itemize}

The discarded elements outlined in the proof of Proposition \ref{shnotisom} are placed individually in new blocks $M_i$ that have not yet been started. At each stage of the construction, in addition to updating the $M_a$, we start to build new blocks between any two blocks and add new blocks at each end, so that we will get a dense family of blocks. 

If $P(x)$, then each $M_a$ will be finite, so that $M_0 + M_1 + \cdots$ will have type $\omega$ and therefore $L_{f(x)} \cong \omega \cdot \eta$. 

If $\neg P(x)$, then eventually each $M_a$ will have type $\omega$,  so that $M_0 + M_1 + \cdots$ will have type $\omega \cdot \omega$ and therefore $L_{f(x)} \cong \omega^2 \cdot \eta$. 

The proof is very similar when considering $\omega^*\cdot\eta$ or $\zeta\cdot\eta$.

\medskip

(5) We again use the $\Pi^0_2$ completeness of $Inf$. Given an arbitrary $\Pi^0_3$ predicate $Q(e) \equiv (\forall m) P(e,m)$, where $P$ is $\Sigma^0_2$, there is a c.e.\ relation $R$ such that $P(e,m)$ if and only if $\{x: R(e,m,x)\}$ is finite. Here we may assume that if $\neg Q(e)$, then $\neg P(e,m)$ for co-finitely many $m$. Also, we may assume that at any stage $s+1$, there is exactly one pair $(m,x)$ such that $e,m,x$ comes into $R$ at stage $s+1$. 

It suffices to define a computable function $\phi$ so that, for all $e$, $Q(e)$ if and only if $L_{\phi(e)} \cong \omega$. 

The ordering $L_{\phi(e)}$ is defined as the sum $L_0 + 3 + L_1 + 3 +  \dotsb$, where for each $m$, $L_m \cong \omega$, if $\neg P(e,m)$, and $L_m$ is finite and has size $k_m$ if $\{x: P(e,m,x)\}$ has finite size $k_m$. 

For the construction, we start with each $L_m = \emptyset$.  At any stage $s+1$,
when $(e,m,x)$ enters $R$, we add $x$ to the end of $L_m$. 

The argument for $\omega^*$ and $\zeta$ is similar. 

\end{proof}

\begin{theorem} \begin{enumerate}
\item $\{L_e: L_e\ \text{is homogeneous}\}$ is $\Pi^0_2$ complete.
\item An ordering $\{e: L_e\ \text{is weakly homogeneous}\}$ is $\Sigma^0_3$ complete.
\end{enumerate}
\end{theorem} 

\begin{proof} (1) This follows from part (2) of Proposition \ref{Le1}, since a linear ordering $L$ is homogeneous if and only if it is isomorphic to $\eta$.

(2) $L_e$ is weakly homogeneous if and only if there exist $k$ and elements $x_0,\dots,x_{k-1}$ such that the intervals $(-\infty, x_1)$, $(x_i,x_{i+1})$ (for each $i$), and $(x_{k-1},\infty)$ are all either empty or are homogeneous. Therefore, the set is $\Sigma^0_3$.

In particular, note that, for any finite $k$, $\eta + k + \eta$ is weakly homogeneous, whereas $\eta + \omega + \eta$ is not weakly homogeneous. 

For the completeness, we will use the technique of Proposition \ref{shnotisom} to define a block $[a_e]$ that has finite type if $e \in Cof$, and otherwise has type $\omega$. Now we are simply building copies of $\eta$ on both sides of this block. Thus, when the construction requires discarding some elements at the end of $[a_e]$ at stage $s$, we simply move them into the copy of $\eta$, which we are building on the right. At the end of the construction, we have $L_{f(e)} = \eta + [a_e] + \eta$. It follows that $L_{f(e)}$ is weakly homogeneous if and only if $e \in Cof$. 
\end{proof} 

We note that $L$ is homogeneous if and only if it has no successors.
Also, $L$ is weakly homogeneous if and only if it has finitely many successors, and this is also if and only if $L$ is relatively computably categorical.
This is all explored in detail in \cite{AC17}.

\begin{proposition} For any $n \in \omega$: 
\begin{enumerate}
\item $\{e: |Bk(L_e)| \geq n\}$ is $\Sigma^0_3$;
\item $\{e: |Bk(L_e)| \leq n\}$ is $\Pi^0_3$; 
\item $\{e: |Bk(L_e)| = n\}$ is $D^0_3$; 
    \item $\{e: Bk(L_e) \text{is finite}\}$ is $\Sigma^0_4$; 
    \item $\{e: Bk(L_e) \cong \omega\}$ and $\{e: Bk(L_e) \cong \omega^*\}$ , and $\{e: Bk(L_e) \cong \zeta\}$ are all $\Pi^0_5$;
    \item $\{e: Bk(L_e) \cong \eta\}$ is $\Pi^0_4$; 
    \item $\{e: Bk(L_e) \cong n \cdot \eta\}$ is $\Pi^0_5$; 
    \item $\{e: Bk(L_e) \cong \omega \cdot \eta\}$ is $\Pi^0_4$. 
\end{enumerate}
\end{proposition}

\begin{proof} These all follow from the proof of Proposition \ref{Le1} together with Proposition \ref{block1}, by replacing each $x<y$ with $[x] < [y]$. 
\end{proof}

We say that the linear ordering $L$ is \emph{strongly $\eta$-like} if $Bk(L) \cong \eta$ and there is a finite bound on the size of the blocks. We say that $L$ is $\eta$-like if $Bk(L) \cong \eta$ and all blocks are finite.

\begin{theorem} \label{shornot} \begin{enumerate} 

    \item $\{e: L_e$ is strongly $\eta$-like$\}$ is $\Sigma^0_3$ complete. 
    \item $\{e: L_e$ is $\eta$-like$\}$ is $\Pi^0_4$ complete. 

        \item $\{e: L_e \cong Sh(A) \ \text{for some $A \subseteq \omega$}\}$ is $\Pi^0_4$ complete.

    \item $\{e: L_e \cong Sh(A) \ \text{for some $A \subseteq \omega\cup \{\omega, \omega^*,\zeta\}$}\}$ is $\Pi^0_5$ complete.

\end{enumerate}
\end{theorem}

\begin{proof} First, we note that if all blocks are finite, then the blocks must be densely ordered. Given that all blocks are finite, $L_e$ has no leftmost block if and only if it has no least element, which is a $\Pi^0_2$ condition.
Not having a rightmost block is similar. 
 
(1) For the upper bound, the definition of strongly $\eta$-like just requires that $(\exists b)(\forall x)|[x]| \leq b$, which is $\Sigma^0_3$ by Proposition \ref{block1}. 

For the completeness, we modify the argument used in Proposition \ref{shnotisom}. There we built a copy of $Sh(\omega+1)$ such that for each $e$, a single block $[b_e]$ had type $\omega$ when $e \notin Cof$, and had size $n_e$ when $n_e$ was the least $n$ such that every $m \geq n$ belongs to $W_e$. Now, we fix $e$ and build a linear ordering $L_{\phi(e)}$ that is a copy of $Sh(\omega)$ if $e \in Cof$, and is a copy of $Sh(\omega)+\omega+Sh(\omega)$ if $e \notin Cof$ in an analogous manner.  





\medskip

We prove (2) and (3) together.
For the upper bound of (2), we just need to say that $(\forall x)(\exists k)|[x]| \leq k$, which is $\Pi^0_4$ by Proposition \ref{block1}. 
The upper bound for (3) is similar, but with the added statement that 
\[\Big((\forall x\neq y)\exists z_1 \exists z_2 \exists z_3 ~( [z_1]\cong k \land [z_2]\cong k \land [z_3]\cong k \land [z_1]<[x]<[z_2]<[y]<[z_3])\ \Big)\lor \forall w ~ [w]\not\cong k.\]
This is $\Pi_4^0$ by Proposition \ref{block1}.

For the completeness, let $P(x) \equiv (\forall a)R(a,x)$, where $R$ is $\Sigma^0_3$. We use the technique from the proof of Theorem \ref{shnotisom} to build $L_{f(x)}$ as a dense family of blocks. When $R(a,x)$, then the block $M_a$ will be finite, and when $\neg R(a,x)$, the block $M_a$ will have type $\omega$. We also add blocks of every finite size and now build a shuffle sum by ensuring that the blocks of every type are dense. 

If $P(x)$, then each $M_a$ will be finite, so that $L_{f(x)} \cong Sh(\omega)$.

If $\neg P(x)$, then some $M_a$ will have type $\omega$, $L_{f(x)} \cong Sh(\omega+1)$. 

\medskip

(4) For the upper bound, we must take for each block type $B$,
\[\Big((\forall x\neq y)\exists z_1 \exists z_2 \exists z_3 ~(  [z_1]\cong B \land ~ [z_2]\cong B \land  [z_3]\cong B \land [z_1]<[x]<[z_2]<[y]<[z_3])\Big)\lor \forall w ~ [w]\not\cong B.\]
This is $\Pi_5^0$ by Proposition \ref{block1}.

Let $P(x) \equiv (\forall a) R(a,x)$, where $R$ is $\Sigma^0_4$. 
From the uniform proof of parts (2) and (3), we may define $M_a = L_{f(x,a)}$ to be isomorphic to $Sh(\omega+1)$ if $R(a,x)$ and isomorphic to $Sh(\omega)$ if $\neg R(a,x)$.  If $P(x)$, then every $M_a \cong Sh(\omega+1)$, and if $\neg P(x)$, then, without loss of generality, $M_a \cong Sh(\omega+1)$ for $a \leq k$ and $M_a \cong Sh(\omega)$ for all $a > k$.  Now let $L_{\phi(x)} = Sh(\omega+1) + M_0 + M_1 + \cdots$. 

If $P(x)$, then $L_{\phi(x)}  \cong Sh(\omega+1) + 
Sh(\omega+1) + \cdots \cong Sh(\omega+1)$, which is a shuffle sum. 

If $\neg P(x)$, then  $L_{\phi(x)} = Sh(\omega+1) + \dots +  Sh(\omega+1) + Sh(\omega) + Sh(\omega) + \dots \cong Sh(\omega+1) + Sh(\omega)$, which is not a shuffle sum. 
\end{proof}

\begin{theorem} \label{homcplx} \begin{enumerate}
    \item $\{e: L_e \ \text{is $sp$-homogeneous}\}$ is $\Pi^0_5$ complete.
    \item $\{e: L_e \ \text{is weakly $sp$-homogeneous}\}$ is $\Sigma^0_6$ complete.  
\end{enumerate}
\end{theorem}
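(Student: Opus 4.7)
Based on Theorem~\ref{thm:homogChar}, I will express sp-homogeneity of $L_e$ as a $\Pi^0_5$ condition directly from the structural decomposition. Specifically, $L_e$ is sp-homogeneous iff for all $x,y,z$ with $[x]\cong[y]$, $x\not\sim y$, and $[z]$ lying block-wise at or between $[x]$ and $[y]$: (a) $[z]$ is non-unique, i.e., some $z'\not\sim z$ satisfies $[z']\cong[z]$; (b) there exist $w_1<x$ and $w_2>y$ in blocks distinct from those of $x,y,z$ with $[w_1]\cong[w_2]\cong[z]$; and (c) for all $u_1,u_2$ bracketing $[x],[y]$ from outside block-wise, some $z'\in(u_1,u_2)$ exists in a block distinct from those of $u_1,u_2,z$ with $[z']\cong[z]$. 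By Proposition~\ref{block1}, ``$[a]\cong[b]$'' is $\Delta^0_4$ and ``$a\sim b$'' is $\Sigma^0_2$, so (a) and (b) are $\Sigma^0_4$ while (c) is $\Pi^0_5$; combining and quantifying universally over $x,y,z$ yields $\Pi^0_5$ overall.

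\textbf{Part (1), lower bound.} I reduce from the $\Pi^0_5$-complete set $\{e:L_e\cong Sh(A)\text{ for some }A\subseteq\omega\cup\{\omega,\omega^*,\zeta\}\}$ of Theorem~\ref{shornot}(4). That construction yields $L_{\phi(x)}\cong Sh(\omega+1)$ when the defining $\Pi^0_5$ predicate holds---sp-homogeneous by Theorem~\ref{ush}---and $L_{\phi(x)}\cong Sh(\omega+1)+Sh(\omega)$ otherwise. By Theorem~\ref{thm:homogChar}, the latter is not sp-homogeneous: the non-unique type $\omega$ has convex hull equal to the first summand, while non-unique finite types have convex hull all of $L$, so a shared type would have to lie in two distinct would-be $A_v$'s, contradicting pairwise disjointness.

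\textbf{Part (2).} The $\Sigma^0_6$ upper bound is immediate: $L_e$ is weakly sp-homogeneous iff $\exists k\,\exists x_1<\cdots<x_{k-1}$ such that each of the $k$ block-convex intervals they cut is sp-homogeneous---an existential over a uniform $\Pi^0_5$ condition. For $\Sigma^0_6$-hardness, I reduce from $\{e:\exists K_0\,\forall K\ge K_0\,P'(K,e)\}$ with $P'$ uniformly $\Pi^0_5$; this ``$P'$ cofinitely often'' set is $\Sigma^0_6$-complete via the bounded-disjunction trick $P'(K,e)\equiv\exists K'\le K\,P(K',e)$ applied to an arbitrary $\Sigma^0_6=\exists\Pi^0_5$ predicate. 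Uniformly in $K,e$, I modify Theorem~\ref{shornot}(4)'s construction to produce $M_{K,e}\cong Sh(\omega+1)$ when $P'(K,e)$ holds and $M_{K,e}\cong Sh(\omega+1)+B+Sh(\omega)$ otherwise, where $B$ is a designated internal block of a fixed finite type. I then set $L_{f(e)}=M_{0,e}+B'+M_{1,e}+B'+\cdots$ with $B'$ a block of the same type as $B$. If cofinitely many $M_{K,e}$ are good, then finitely many exceptional blocks---the internal $B$ inside each bad chunk together with its adjacent $B'$'s---decompose $L_{f(e)}$ into sp-homogeneous convex pieces via absorption identities like $Sh(\omega+1)+B'+Sh(\omega+1)\cong Sh(\omega+1)$. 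Conversely, if infinitely many $M_{K,e}$ are bad, any finite exceptional set leaves a convex piece embedding a copy of $Sh(\omega+1)+Sh(\omega)$, which fails sp-homogeneity by the part-(1) analysis.

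\textbf{Main obstacle.} The decisive verification is that $Sh(\omega+1)+Sh(\omega)$ is itself not weakly sp-homogeneous: any single chosen block lies in one of its two open shuffle halves, leaving a convex complement of the same offending form, so no finite set of exceptional blocks produces an sp-homogeneous decomposition. This absorption-versus-obstruction dichotomy---governed by the identity $Sh(A)+X+Sh(A)\cong Sh(A)$ for $X$ a block of a type in $A$---underpins the part-(2) reduction's ability to distinguish cofinitely-good from infinitely-bad sequences of $M_{K,e}$.
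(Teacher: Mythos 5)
Your proposal diverges from the paper in one way that matters: the $\Pi^0_5$ upper bound formula in Part~(1) is too weak to characterize sp-homogeneity. The defect is in your condition~(c): you quantify over $u_1,u_2$ that bracket $[x],[y]$ \emph{from outside}, so the interval $(u_1,u_2)$ always contains $z$ and (by~(b)) points of type $[z]$ near $x$ and $y$, and~(c) becomes essentially vacuous. What is actually needed is density of every occurring type \emph{within} the putative shuffle interval, i.e.\ a quantifier over all pairs $y_1<y_2$ that each lie between two blocks of type $[x]$. Concretely, $L=Sh(\{2,3\})+Sh(\{2\})+Sh(\{2,3\})$ satisfies your $(a)\wedge(b)\wedge(c)$ for every admissible $x,y,z$ (types $2$ and $3$ both occur on both sides of any pair, and in any outer bracket), yet it is not sp-homogeneous: a $2$-block in the middle $Sh(\{2\})$ has no $3$-blocks nearby on either side, while a $2$-block in $Sh(\{2,3\})$ does, so the block isomorphism between them cannot extend. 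The paper's formulation handles exactly this by letting $y_1,y_2$ range over \emph{all} points between two blocks of type $[x]$, which detects the missing $3$-blocks between two $2$-blocks in the middle portion.

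The rest of your argument is sound and follows the paper's route. The Part~(1) lower bound via Theorem~\ref{shornot}(4), producing $Sh(\omega+1)$ versus $Sh(\omega+1)+Sh(\omega)$, matches the paper; your justification of non-sp-homogeneity via the pairwise-disjointness requirement in Theorem~\ref{thm:homogChar} is a fine alternative to the paper's concrete non-extendable isomorphism. Your Part~(2) is also correct and, if anything, slightly more careful than the paper's: by inserting explicit separator blocks $B'$ between consecutive $M_{K,e}$ (and the internal $B$ inside each bad chunk), every junction has a block that can serve as an exceptional point, and your absorption argument $(Sh(\omega+1)+B')\cdot\omega\cong Sh(\omega+1)$ finishes the cofinitely-good case cleanly. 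The ``main obstacle'' you flag---that $Sh(\omega+1)+Sh(\omega)$ with no boundary block cannot be split at the seam---is precisely why those separators are needed, and you handle it. So: Part~(2) and the Part~(1) reduction stand, but Part~(1)'s upper bound needs the density quantifier fixed before the proof is complete.
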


\begin{proof}  The $\Pi^0_5$ definition says the following. For any element $x$, 
either there is no other block isomorphic to $[x]$ or there is another block of type $[x]$, and for any three elements $y_1 < y_2$ and $z$, if each of them is between two blocks of type $[x]$, then there is a block of size $[z]$ between $y_1$ and $y_2$. 
This is of the desired complexity by the calculations in Proposition \ref{block1}.
It provides the correct definition because of Theorem \ref{thm:homogChar}.

(1) The completeness follows from the proof of parts (2) and (3) of Theorem \ref{shornot}. That is, if $P(x)$, then $L_{\phi(x)} \cong Sh(\omega+1)$, which is a shuffle sum, and therefore $sp$-homogeneous. If $\neg P(x)$, then $L_{\phi(x)} = Sh(\omega+1) + \dots +  Sh(\omega+1) + Sh(\omega) + Sh(\omega) + \cdots$, which is not a shuffle sum and is also not $sp$-homogeneous, since the two shuffle sum types are not disjoint. That is, consider two pairs $x_1 < y_1$ from the first $Sh(\omega+1)$ and $x_2 < y_2$ from the first $Sh(\omega)$, with $[x_1] < [y_1]$ and $[x_2] < [y_2]$. The map taking $[x_1]$ to $[x_2]$ and $[y_1]$ to $[y_2]$ is an isomorphism, but cannot be extended to an automorphism since there is a block of type $\omega$ between $x_1$ and $y_1$, but there is no such block between $x_2$ and $y_2$.  

(2) The $\Sigma^0_6$ definition says that there exist $x_0,\dots,x_{k-1}$ such that the intervals $(-\infty, x_1)_B$, $(x_i,x_{i+1})_B$ (for each $i$), and $(x_{k-1},\infty)_B$ are all either empty or are $sp$-homogeneous. A straightforward modification of the definition of $sp$-homogeneous yields the desired complexity.
It provides the correct definition because of Theorem \ref{thm:homogChar}.

For the completeness, let $P(x) \equiv (\exists a)R(a,x)$, where $R$ is $\Pi^0_5$.
It follows from a straightforward modification of the proof of part (4) of Theorem \ref{shornot} that we can build linear orderings $M_{a,x}$ which are isomorphic to $Sh(\omega+1)+1+Sh(\omega+1)\cong Sh(\omega+1)$ if $\neg R(a,x)$ and isomorphic to $Sh(\omega+1)+1+Sh(\omega)$ if $R(a,x)$.  Then let $L_{f(x)} \cong M_{0,x} + M_{1,x} + \dotsb$. 

If $P(x)$, then, $L_{f(x)}  \cong Sh(\omega+1) + 1 + Sh(\omega)  + 1 + \cdots + Sh(\omega) + 1 + Sh(\omega+1)$. This is weakly $sp$-homogeneous, where the exceptional points are those in the finitely many blocks written as explicit 1s in the above presentation. 

If $\neg P(x)$, then  $L_{f(x)}  \cong Sh(\omega+1)+1+Sh(\omega) + 1 + \cdots +Sh(\omega+1)+1+Sh(\omega) + \cdots$. This is not weakly $sp$-homogeneous by the following argument.  Given any finite set of exceptional points, let $a$ be large enough so that all of them are below the block $M_{a,x}$.  Now the remainder of $L_{f(x)} \cong Sh(\omega+1)+1+Sh(\omega) + 1 + \cdots +Sh(\omega+1)+1+Sh(\omega) + \cdots$.
Let $[x]$ and $[y]$ be finite blocks of the same size, one in a copy of $Sh(\omega)$ and the other in a copy of $Sh(\omega+1)$.
These generate isomorphic substructures that are not automorphic.
Therefore, $L_{f(x)}$ is not weakly $sp$-homogeneous.
\end{proof}

\section{Boldface Completeness}\label{sec:Boldface}

This section is dedicated to proving the boldface version of the results seen in Section \ref{sec:index}.
Instead of index sets for computable orderings, we work directly with presentations of countable linear orderings in the Polish space $Mod(LO)$.
While uniform relativization of the approach in Section \ref{sec:index} represents a possible approach to prove the theorems in this section, we instead offer a development more native to the descriptive set-theoretic viewpoint.
Our main tool is the boldface pair of structures theorem, a descriptive set-theoretic corollary of the classical result of Ash and Knight \cite{AK90}.
This approach has been taken before, see for example \cite{HMapprox} Theorem 2.2 or Section 5 of \cite{CGHT}.
The key to this approach is that there is a connection between boldface hardness theorems at the standard, asymmetric back-and-forth relations.

\begin{definition}
The \textit{standard asymmetric back-and-forth relations} $\leq_\alpha$, for a countable ordinal $\alpha < \omega_1$, are defined by the following:
    \begin{itemize}
        \item $(\mathcal{M},\bar{a}) \leq_0 (\mathcal{N},\bar{b})$ if $\bar{a}$ and $\bar{b}$ satisfy the same quantifier-free formulas from among the first $|\bar{a}|$-many formulas.
        \item For $\alpha > 0$, $(\mathcal{M},\bar{a}) \leq_\alpha (\mathcal{N},\bar{b})$ if for each $\beta < \alpha$ and $\bar{d} \in \mathcal{N}$ there is $\bar{c} \in \mathcal{M}$ such that $(\mathcal{N},\bar{b} \bar{d}) \leq_\beta (\mathcal{M},\bar{a} \bar{c})$.
    \end{itemize}
We define $\bar{a} \equiv_\alpha \bar{b}$ if $\bar{a} \leq_\alpha \bar{b}$ and $\bar{b} \leq_\alpha \bar{a}$.
\end{definition}

These relations are usually conceptualized as a game between the Spoiler, who moves first and picks a tuple of elements in alternating structures, and the Duplicator, who moves in response to this choice of tuple in the other structure.
The Spoiler attempts to copy the behavior exhibited by the Spoiler's choices and wins if they can do this until an ordinal clock expires.
This is useful for our purposes because of the aforementioned pair of structures theorem.

\begin{theorem}[\cite{AK90}]\label{thm:bfPoS}
    $\A\leq_\alpha\B$ if and only if $(Copies(\A),Copies(\B))$ is $(\mathbf{\Sigma}_\alpha^0,\mathbf{\Pi}_\alpha^0)-$hard.
\end{theorem}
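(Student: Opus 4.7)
The plan is to leverage the standard correspondence between back-and-forth relations and infinitary formulas of $\mathcal{L}_{\omega_1\omega}$ to prove both directions. For the $(\Leftarrow)$ direction, argue by contrapositive: if $\A\not\leq_\alpha\B$, then an induction on $\alpha$ through the recursive definition of the back-and-forth relation yields a $\Sigma_\alpha$ infinitary formula $\varphi$ such that $\A\models\varphi$ and $\B\not\models\varphi$. By the Lopez--Escobar correspondence, the set $C=\{\mathcal{M}\in Mod(LO):\mathcal{M}\models\varphi\}$ is $\mathbf{\Sigma}_\alpha^0$, contains $Copies(\A)$, and is disjoint from $Copies(\B)$. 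Such a $\mathbf{\Sigma}_\alpha^0$ separator precludes the pair from being $(\mathbf{\Sigma}_\alpha^0,\mathbf{\Pi}_\alpha^0)$-hard, since pulling $C$ back through any purported continuous reduction of a universal $(\mathbf{\Sigma}_\alpha^0,\mathbf{\Pi}_\alpha^0)$-pair would separate it by a $\mathbf{\Sigma}_\alpha^0$ set in the source space, which fails at this level.

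For the forward direction, assume $\A\leq_\alpha\B$. Given a Polish space $X$ and a disjoint pair $S\in\mathbf{\Sigma}_\alpha^0$, $T\in\mathbf{\Pi}_\alpha^0$ in $X$, the goal is to construct a continuous map $f:X\to Mod(LO)$ such that $f(S)\subseteq Copies(\A)$ and $f(T)\subseteq Copies(\B)$. The construction proceeds stage by stage, inspecting successive approximations to the alternating witnesses for $x\in S$ versus $x\in T$, and maintaining a finite partial presentation of a linear ordering at each stage that depends only on a bounded initial segment of $x$. The hypothesis $\A\leq_\alpha\B$ supplies, via a Duplicator strategy in the back-and-forth game, exactly the flexibility needed to extend whatever partial structure has been built into a full copy of either $\A$ or $\B$, depending on how the alternating guesses resolve in the limit.

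The principal obstacle is in the forward direction: one must match the depth-$\alpha$ nested alternation of countable unions and intersections defining a $\mathbf{\Sigma}_\alpha^0$ set with the depth-$\alpha$ recursive structure of $\leq_\alpha$. This matching is the content of the classical Ash--Knight pair of structures construction from~\cite{AK90}; the boldface version simply replaces effective $\Sigma_\alpha^0$ approximations with continuous ones over the Polish space $X$, while the combinatorial heart---a transfinite induction on $\alpha$ managing finitely many back-and-forth requirements via a priority-tree-style argument---is unchanged. The references~\cite{HMapprox} and Section 5 of~\cite{CGHT} carry out this boldface version at the level of generality needed here.
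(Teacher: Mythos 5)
The paper does not prove this statement; it simply cites it as a known result from \cite{AK90} (with the boldface upgrade credited to \cite{HMapprox} and \cite{CGHT}), so there is no in-paper proof to compare yours against. Your high-level plan for the forward direction is right, and appropriately defers the pair-of-structures construction to the literature.

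However, your $(\Leftarrow)$ direction contains a genuine error. The equivalence between the asymmetric back-and-forth relations and infinitary formulas goes the other way than you state: with the paper's convention (Spoiler plays in $\B$, Duplicator responds in $\A$), one has $\A\leq_\alpha\B$ if and only if every $\Pi_\alpha$ sentence true in $\A$ is true in $\B$, equivalently every $\Sigma_\alpha$ sentence true in $\B$ is true in $\A$. (Sanity check at $\alpha=1$: $\A\leq_1\B$ forces $|\A|\geq|\B|$ when both are finite, and $\Sigma_1$ sentences for linear orders assert lower bounds on size, which transfer from the smaller $\B$ to the larger $\A$.) So if $\A\not\leq_\alpha\B$, what you obtain is a $\Pi_\alpha$ sentence $\psi$ with $\A\models\psi$ and $\B\not\models\psi$, not a $\Sigma_\alpha$ one. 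This matters: the resulting separator $D=\{\mathcal{M}:\mathcal{M}\models\psi\}$ is $\mathbf{\Pi}^0_\alpha$, contains $Copies(\A)$, and misses $Copies(\B)$. If $f$ were a continuous reduction of a proper $\mathbf{\Sigma}^0_\alpha$ set $S$ (with $f(S)\subseteq Copies(\A)$ and $f(S^c)\subseteq Copies(\B)$), then $f^{-1}(D)=S$ would be $\mathbf{\Pi}^0_\alpha$, contradicting that $S$ is properly $\mathbf{\Sigma}^0_\alpha$. That is where the contradiction lives. Your version produces a $\mathbf{\Sigma}^0_\alpha$ separator containing $Copies(\A)$, which pulls back only to show that $S$ is $\mathbf{\Sigma}^0_\alpha$ --- something you already knew, and no contradiction at all. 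Swap $\Sigma$ for $\Pi$ (or swap the roles of $\A$ and $\B$ in the satisfaction claim) and the argument closes correctly.
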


This means that we can arrive at our desired result by carefully considering the back-and-forth relations between selected countable linear orderings.
The following are fundamental and extremely useful facts about calculating the back-and-forth relations for linear orderings seen in Lemma 15.8 of \cite{AK00}.

\begin{lemma}[\cite{AK00}]
Suppose $\A, \B$ are linear orders. Then $\A\leq_1 \B$ if and only if $\A$ is infinite or at least as large as $\B$. 
 
For $\alpha>1$, $\A\leq_\alpha \B$ if and only if for every $1\leq\beta<\alpha$ and every partition of $\B$ into intervals $\B_0,\dots,\B_n$, with endpoints in $\B$, there is a partition of $\A$ into intervals $\A_1,\dots,\A_n$ with endpoints in $\A$, such that $\B_i\leq_\beta \A_i$.
\end{lemma}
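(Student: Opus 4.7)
My plan is to proceed by transfinite induction on $\alpha$, handling the base case $\alpha = 1$ directly from the definition of the game. For the base case, $\A \leq_1 \B$ asserts that for every finite tuple $\bar{d} \in \B$ the Duplicator can find $\bar{c} \in \A$ agreeing with $\bar{d}$ on quantifier-free type, which in the language of linear orderings is exactly having the same order type. When $\A$ is infinite, any finite order type embeds into $\A$, so the Duplicator always wins. When $\A$ is finite with $|\A| \geq |\B|$, the Duplicator can match any Spoiler tuple since $\A$ has enough elements. Conversely, if $\A$ is finite with $|\A| < |\B|$, the Spoiler plays an enumeration of all of $\B$, which admits no order-preserving response in $\A$ by pigeonhole.

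For the inductive step, the central tool is the following decomposition lemma: if $\bar{d}$ and $\bar{c}$ are increasing tuples of the same length partitioning $\B$ and $\A$ into matching interval sequences $\B_0, \dots, \B_n$ and $\A_0, \dots, \A_n$ respectively, then for every ordinal $\gamma$,
\[
(\B, \bar{d}) \leq_\gamma (\A, \bar{c}) \iff \B_i \leq_\gamma \A_i \text{ for all } i \leq n.
\]
The intuition is that any new element introduced during play is pinned to a unique interval by its order relations with the endpoint tuples, so the Duplicator's response must lie in the corresponding interval; thus the game on $(\B, \bar{d})$ versus $(\A, \bar{c})$ splits into $n+1$ parallel independent games on the pairs $(\B_i, \A_i)$. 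I would prove this by a secondary transfinite induction on $\gamma$, with the base case reducing to order-type matching on each interval.

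Granted the decomposition, both directions of the inductive step fall into place. For the forward direction, assume $\A \leq_\alpha \B$ and fix $1 \leq \beta < \alpha$ and a partition of $\B$ with endpoint tuple $\bar{d}$; the definition of $\leq_\alpha$ supplies $\bar{c} \in \A$ with $(\B, \bar{d}) \leq_\beta (\A, \bar{c})$, and since $\beta \geq 1$ the defining clause applied with an empty Spoiler move forces $\bar{c}$ to agree quantifier-free with $\bar{d}$, so $\bar{c}$ is increasing and defines a matching partition of $\A$. The decomposition lemma then delivers $\B_i \leq_\beta \A_i$. For the backward direction, an arbitrary Spoiler move $\bar{d} \in \B$ can be replaced by its increasing enumeration of distinct coordinates, which defines a partition of $\B$; the hypothesis produces a matching partition of $\A$ with endpoints $\bar{c}$, and the decomposition lemma assembles these interval-wise witnesses into $(\B, \bar{d}) \leq_\beta (\A, \bar{c})$.

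The hard part will be the careful statement and proof of the decomposition lemma, since the Spoiler may play multi-element tuples whose coordinates are scattered across several intervals simultaneously. One must verify that Duplicator strategies on individual intervals combine into a single coherent strategy on the whole structure (and conversely that a global strategy restricts to valid strategies on each interval). Once this is solid, the remainder is essentially a translation of the $\leq_\alpha$ definition, using the order-theoretic rigidity of linear orderings to transfer tuples between increasing enumerations and their induced partitions.
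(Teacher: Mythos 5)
The paper does not prove this lemma; it is cited directly from Ash and Knight (Lemma~15.8 of \cite{AK00}), so there is no in-paper proof to compare against. Your approach is the standard textbook one, and it is essentially correct: the decomposition lemma (that $(\B,\bar d) \leq_\gamma (\A,\bar c)$ iff each interval pair satisfies $\B_i \leq_\gamma \A_i$) is precisely the engine that drives the Ash--Knight argument, and proving it by a secondary transfinite induction on $\gamma$ is the right structure. Your observation that the $\beta=0$ clause of the definition of $\leq_\alpha$ reduces, for $\alpha>1$, to the $\beta\geq 1$ clauses via monotonicity of the back-and-forth relations is also the correct way to reconcile the definition (which quantifies over all $\beta<\alpha$) with the lemma (which restricts to $\beta\geq 1$).

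Two small cautions you should attend to when filling in the details. First, the paper's $\leq_0$ is a finitary approximation (agreement only on the first $|\bar a|$ quantifier-free formulas), so "the defining clause applied with an empty Spoiler move" does not automatically pin down the full atomic type of $\bar c$ against $\bar d$; you need Spoiler to pad the move so that the index threshold exceeds the $O(n^2)$ atomic formulas involving the $n$ coordinates, after which the argument goes through. Second, in the backward direction, when $\bar d$ has repeated or unordered coordinates, you must explicitly reconstruct a Duplicator response $\bar c$ from the increasing endpoint tuple supplied by the hypothesis, inserting repetitions and permuting to match $\bar d$'s equality and order pattern before invoking the decomposition lemma; this is routine bookkeeping but easy to leave implicit. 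Neither point is a gap in the plan, only a place where the write-up must be careful.
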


This lemma is so fundamental to our calculations that we often appeal to it without citation.
It should also be noted that, just like $\leq_1$ for linear orderings, $\leq_2$ is completely characterized and understood. 
This was first done by Montalb\'an in \cite{McountingBF}, see also \cite{GR23} for a more contemporary treatment.
It is a bit too complicated and, therefore, too much of a sidetrack to write out the full characterization here.
That said, we do remind the reader of the most important consequences of this result for our purposes.
\begin{theorem}\label{thm:2bnf}[\cite{McountingBF} Section 4.1]
    Say that $\L$ and $\K$ are linear orderings with arbitrarily long successor chains or that $\L$ convexly embeds into $\K$. Write $\L=\A_\L+\L'+\B_\L$ and $\K=\A_\K+\K'+\B_\K$ where
    \begin{itemize}
        \item $\A_\L$ (and analogously $\A_\K$) is empty if $\L$ has no first element and contains exactly the block of the first element if there is one,
        \item $\B_\L$ (and analogously $\B_\K$) is empty if $\L$ has no last element and contains exactly the block of the last element if there is one.
    \end{itemize}
    If $|\A_\L|\leq |\A_\K|$ and $|\B_\L|\leq |\B_\K|$, then
    \[\L\geq_2\K.\]
\end{theorem}
We appeal to this result to simplify calculations in this and later sections.
That said, we very occasionally also apply the more general form of the theorem from \cite{McountingBF} that includes the more complicated case without arbitrarily long successor chains.
In short, when querying the $\leq_2$ relation between linear orderings in a proof, we will generally cite the theorem above.
From there, we will state the answer that this work gives for the linear orderings in question.
This will be done instead of unraveling the full argument that is already provided in this source and others.

We begin by comparing the following two closely related shuffle sums.

\begin{lemma}\label{lem:4above}
\[
Sh(\omega+1)\leq_4 Sh(\omega).
\]
\end{lemma}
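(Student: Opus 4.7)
The plan is to run an explicit back-and-forth argument. Fix $\beta<4$ and a partition of $Sh(\omega)$ cut by points $x_1<\cdots<x_n$; each $x_i$ occurs at some position $p_i$ inside a finite block of size $k_i$ of $Sh(\omega)$. Because every finite block size appears densely in $Sh(\omega+1)$ as well, the Duplicator can choose $y_1<\cdots<y_n$ in $Sh(\omega+1)$ with each $y_i$ at position $p_i$ in a finite block of size $k_i$, arranged so that $y_i,y_{i+1}$ lie in a common block exactly when $x_i,x_{i+1}$ do. The matched pieces then come in two forms: (i) matching finite intervals inside a common block, and (ii) matched triples $m_1+Sh(\omega)+m_2$ versus $m_1+Sh(\omega+1)+m_2$ with identical end-caps $m_1,m_2\in\omega$ (and $m_1=0$ or $m_2=0$ for the terminal pieces). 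Case (i) is trivial, and the problem reduces to establishing $m_1+Sh(\omega)+m_2\leq_3 m_1+Sh(\omega+1)+m_2$ for arbitrary $m_1,m_2$.

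At this inner stage the roles swap: the Spoiler cuts the $Sh(\omega+1)$-piece at points $z_1<\cdots<z_l$ and may now place cuts inside $\omega$-blocks, the feature distinguishing the two orderings. I would respond in the $Sh(\omega)$-piece by the following dictionary: if $z_j$ sits at position $p$ of a finite block of size $k$ (or inside an end-cap), put $w_j$ at the identical location; if $z_j$ sits at position $p$ of an $\omega$-block, put $w_j$ at position $p$ inside a finite block of $Sh(\omega)$ whose size is at least the largest position Spoiler uses inside that same $\omega$-block. Because Spoiler plays the whole tuple $\bar z$ at once, this maximum is available to Duplicator in advance, and the density of every finite block size in $Sh(\omega)$ allows the $w_j$'s to be placed in the correct order while preserving same-block/different-block relationships among the $z_j$'s.

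Verification reduces to showing $X_{i,j}\leq_2 Y_{i,j}$ for each matched pair of sub-pieces, and I expect this to follow immediately from Theorem~\ref{thm:2bnf}: both sub-pieces inherit arbitrarily long successor chains from their ambient shuffle sums, so the theorem's hypothesis is met and the comparison reduces to the sizes of the first and last blocks. The last (right-hand) block of any sub-piece is always a finite initial segment of some original block up to the cut point, and Duplicator's matching renders these identical on the two sides. The first (left-hand) block of a sub-piece is either the tail of a finite cut block, matched exactly, or the $\omega$-type tail of an $\omega$-block, in which case the $Sh(\omega+1)$-side carries $|\A_{X_{i,j}}|=\omega$ while the $Sh(\omega)$-side carries only a finite quantity, making the required inequality $|\A_{Y_{i,j}}|\leq|\A_{X_{i,j}}|$ immediate. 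Thus $Y_{i,j}\geq_2 X_{i,j}$ in every case.

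The main subtlety worth flagging is the asymmetry of cuts inside $\omega$-blocks: such a cut produces a finite initial segment below but an $\omega$-tail above, whereas $Sh(\omega)$ offers only finite blocks in either direction. What saves the argument is that the inequality from Theorem~\ref{thm:2bnf} runs in the favorable direction — the $Sh(\omega)$-side only needs to be no larger than the $Sh(\omega+1)$-side at the boundaries — so the "extra" $\omega$-tails present on the $Sh(\omega+1)$ side are harmless. The only coordination challenge is when several Spoiler cuts land in the same $\omega$-block, and this is settled by picking a single finite block of $Sh(\omega)$ large enough to accommodate all the relevant positions simultaneously.
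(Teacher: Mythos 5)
Your proof is correct and follows essentially the same route as the paper's: respond to the Spoiler's cuts along the natural embedding of the finite-block part, then in the inner game replace initial segments of $\omega$-blocks with sufficiently large finite blocks, and finally discharge the level-2 obligations via Theorem~\ref{thm:2bnf} using the favorable direction of the end-block inequality. The only presentational difference is that the paper assumes WLOG that the Spoiler's tuples saturate the blocks they touch (so the resulting pieces are literally $Sh(\omega)$, $Sh(\omega+1)$, or $\omega+Sh(\omega+1)$ with no end-caps), whereas you carry the end-caps $m_1,m_2$ explicitly; the content of the argument and the key observation about which side's boundary block is allowed to be larger are identical.
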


\begin{proof}

We provide a winning strategy for Duplicator in this game.
Let $\iota:Sh(\omega)\to Sh(\omega+1)$ be the natural embedding whose image is all of the blocks of finite size.
Say that Spoiler plays $\bar{p}$, which, without loss of generality, saturates the blocks that it intersects.
Duplicator can respond with the play of $\iota(\bar{p})$.
Note that the non-empty intervals defined by $\bar{p}$ are all isomorphic to $Sh(\omega)$.
Similarly, the non-empty itervals defined by $\iota(\bar{p})$ are all isomorphic to $Sh(\omega+1)$.
Therefore, this first move wins if and only if 
\[Sh(\omega) \leq_3 Sh(\omega+1).\]
Say that Spoiler plays $\bar{q}_1<\cdots<\bar{q}_n$ as their first move in this new game, where each $\bar{q}_i$ is a set of successors.
Without loss of generality, if $\bar{q}_i$ is in a finite block, it saturates this block, and if it is in a block of order type $\omega$, it is an initial segment of this copy of $\omega$.
In response to this move, Duplicator plays $\bar{r}_1<\cdots<\bar{r}_n$ increasing saturated finite blocks with $|\bar{q}_i|=|\bar{r}_i|$.
If $\bar{q}_i$ is from a finite block, the interval after it has order type $Sh(\omega+1)$; if $\bar{q}_i$ is from an $\omega$ block, the interval after it has order type $\omega+Sh(\omega+1)$.
Meanwhile, the non-empty intervals defined by $\bar{r}_1<\cdots<\bar{r}_n$ are all isomorphic to $Sh(\omega)$.
Therefore, it is enough to show that 
\[Sh(\omega) \geq_2 Sh(\omega+1) \text{  and  } Sh(\omega) \geq_2 \omega+ Sh(\omega+1). \]
These facts both follow from Montalb\'an's characterization of the 2 back-and-forth relations stated in Theorem \ref{thm:2bnf}.
One can also concretely check this fact by playing with the Duplicator along the previously mentioned embedding $\iota:Sh(\omega)\to Sh(\omega+1)$ and checking the cardinalities of the resulting intervals.

\end{proof}

In parallel to our lightface results, we now seek to compare two closely related linear orderings, one of which is $sp-$homogeneous and the other of which is not.

\begin{proposition}\label{prop:5below}
\[Sh(\omega+1)+Sh(\omega) \leq_5 Sh(\omega+1) \]
\end{proposition}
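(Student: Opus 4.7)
The plan is to give Duplicator an explicit winning strategy for the five-round game. The strategy has two modes: in rounds where Spoiler plays in $Sh(\omega+1)$, Duplicator mirrors block-for-block into the first $Sh(\omega+1)$ summand of $Sh(\omega+1)+Sh(\omega)$; in the round where Spoiler plays in $Sh(\omega+1)+Sh(\omega)$, Duplicator partitions Spoiler's tuple by summand and mirrors the first-summand elements into a left region of $Sh(\omega+1)$ and the second-summand elements (which are necessarily in finite blocks) into a disjoint right region. Throughout I adopt the assumption, as in the proof of Lemma \ref{lem:4above}, that Spoiler's moves saturate any finite block they intersect and form an initial segment of any $\omega$-block they intersect.

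In Round 1 (level $5$), Spoiler plays $\bar{d}$ in $Sh(\omega+1)$ and Duplicator plays the block-by-block mirror $\bar{c}$ inside the first summand of $Sh(\omega+1)+Sh(\omega)$. All induced interval pairs are identical (a copy of $Sh(\omega+1)$, perhaps with an $\omega$-prefix when the adjacent parameter is an $\omega$-block initial segment) except the trailing interval, which is $Sh(\omega+1)$ on the $\B$-side but $Sh(\omega+1)+Sh(\omega)$ on the $\A$-side, since the entire second summand is still to come. This reduces the problem to $Sh(\omega+1)\leq_4 Sh(\omega+1)+Sh(\omega)$.

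In Round 2 (level $4$), with Spoiler playing $\bar{d}'=\bar{d}'_1\cup\bar{d}'_2$ split across the two summands, Duplicator's split-region strategy yields interval pairs that are either identical (away from the junction and the second summand) or fall into two types. The intervals that live entirely inside, or trail out of, the second summand are $Sh(\omega)$ on the $\B$-side versus $Sh(\omega+1)$ on the $\A$-side, so they require $Sh(\omega)\leq_3 Sh(\omega+1)$, which is already established inside the proof of Lemma \ref{lem:4above}. The single interval crossing the junction (or the initial interval if $\bar{d}'_1$ is empty, or the trailing interval if $\bar{d}'_2$ is empty) is $Sh(\omega+1)+Sh(\omega)$ on the $\B$-side versus $Sh(\omega+1)$ on the $\A$-side, so it requires $Sh(\omega+1)+Sh(\omega)\leq_3 Sh(\omega+1)$. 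Round 3 (level $3$) then applies the Round-1 mirror strategy once more, reducing $Sh(\omega+1)+Sh(\omega)\leq_3 Sh(\omega+1)$ to the single level-$2$ comparison $Sh(\omega+1)\leq_2 Sh(\omega+1)+Sh(\omega)$.

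This final comparison is immediate from Theorem \ref{thm:2bnf}: both orderings have arbitrarily long successor chains, and neither has a first or last block, so the endpoint block data vanishes on both sides and one in fact obtains $Sh(\omega+1)\equiv_2 Sh(\omega+1)+Sh(\omega)$. The main obstacle is Round 2, the only round where Duplicator's strategy must genuinely branch based on which summand Spoiler plays in and where the junction interval truly looks different on the two sides; the $\omega$-prefix bookkeeping that arises from $\omega$-block initial segments is harmless because those prefixes appear symmetrically on both sides via the mirror.
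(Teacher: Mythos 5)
Your proof is correct, and it takes a genuinely different route from the paper's even though both open with the identical Round~1 mirror along the natural embedding $Sh(\omega+1)\hookrightarrow Sh(\omega+1)+Sh(\omega)$. After that round, the paper absorbs the extra $Sh(\omega)$ tail by invoking the isomorphism $Sh(\omega+1)\cong Sh(\omega+1)+Sh(\omega+1)$ and then appeals to a ``summand by summand'' play to reduce the end-segment comparison immediately to $Sh(\omega+1)\leq_4 Sh(\omega)$, which it then cites from Lemma~\ref{lem:4above} as a black box. You instead keep the composite tail $Sh(\omega+1)+Sh(\omega)$ intact and explicitly play out Round~2 with the split strategy, which produces two classes of sub-games: $Sh(\omega)\leq_3 Sh(\omega+1)$ (you correctly identify this as established inside the proof of Lemma~\ref{lem:4above}, as the paper indeed proves it en route) and the junction comparison $Sh(\omega+1)+Sh(\omega)\leq_3 Sh(\omega+1)$, which is the original statement two levels down; you then dispatch this with one more mirror round reducing to the $\leq_2$ comparison settled by Theorem~\ref{thm:2bnf}. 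The trade-off: your version makes Duplicator's strategy explicit at every level and avoids leaning on the (unstated) additivity of $\leq_\alpha$ under sums that the paper's ``summand by summand'' appeal implicitly invokes, at the cost of a longer argument and heavier bookkeeping in Round~2; your closing remark that the finite/$\omega$-prefixes arising from partially-consumed $\omega$-blocks appear symmetrically under the mirror, and hence pose no obstruction under Theorem~\ref{thm:2bnf}, is the right observation to justify suppressing them.
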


\begin{proof}
We describe a winning strategy for Duplicator in the back-and-forth game.
Let 
\[\nu: Sh(\omega+1)\to Sh(\omega+1)+Sh(\omega)\]
be the natural initial embedding.
Say that Spoiler plays the tuple $\bar{a}$ on their first move.
Duplicator responds with the play $\nu(\bar{a})$.
The intervals defined by $\bar{a}$ are isomorphic to those defined by $\nu(\bar{a})$ in every case but the end segment of the linear ordering.
If the largest value of $\bar{a}$ is within an $\omega$-block, its end segment is isomorphic to $\omega+Sh(\omega+1)$; in this case, the end segment in the other ordering is isomorphic to $\omega+Sh(\omega+1)+Sh(\omega)$.
If the largest value of $\bar{a}$ is not in an $\omega$ block, its end segment is isomorphic to $n+Sh(\omega+1)$ for some $n\in\omega$; in this case, the end segment in the other ordering is isomorphic to $n+Sh(\omega+1)+Sh(\omega)$.
Therefore, this is a winning move if the following relations hold for all $n$:
\[n+Sh(\omega+1)\cong n+Sh(\omega+1)+Sh(\omega+1)\leq_4 n+Sh(\omega+1)+Sh(\omega)
 \text{  and  } 
\]
\[
\omega+Sh(\omega+1)\cong \omega+Sh(\omega+1)+Sh(\omega+1)\leq_4 \omega+Sh(\omega+1)+Sh(\omega).
\]
Playing the back-and-forth game summand by summand and noting the isomorphisms between the first two summands yields that Duplicator has made a winning move, so long as
\[
Sh(\omega+1)\leq_4 Sh(\omega).
\]
This is exactly given by Lemma \ref{lem:4above}.

\end{proof}

\begin{corollary}\label{cor:5hard}
The set $([Sh(\omega+1)]_\cong,[Sh(\omega+1)+Sh(\omega)]_\cong)$ is $(\mathbf{\Pi}_5^0,\mathbf{\Sigma}_5^0)$ hard.
\end{corollary}

\begin{proof}
This follows from Proposition \ref{prop:5below} and Theorem \ref{thm:bfPoS}.
\end{proof}

We now analyze a slight variation of the above proposition that will be useful later.

\begin{proposition}\label{prop:5belowV2}
\[Sh(\omega+1)+1+Sh(\omega)+1 \leq_5 Sh(\omega+1)+1 \]
\end{proposition}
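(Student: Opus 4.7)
The plan is to adapt the strategy of Proposition \ref{prop:5below}, routing Spoiler's terminal $1$ to the terminal $1$ of the larger ordering rather than to the middle one. Set $R = Sh(\omega+1)+1$ and $L = Sh(\omega+1)+1+Sh(\omega)+1$. Whenever Spoiler plays a tuple $\bar{a}$ in $R$, Duplicator responds with $\mu(\bar{a})$, where $\mu$ maps the initial $Sh(\omega+1) \subseteq R$ to the initial $Sh(\omega+1) \subseteq L$ via the identity and, when the terminal $1$ of $R$ appears in $\bar{a}$, sends it to the terminal $1$ of $L$, skipping over the middle $1+Sh(\omega)$ entirely.

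I would then compute the intervals defined by $\bar{a}$ in $R$ and by $\mu(\bar{a})$ in $L$. All intervals that sit strictly inside the initial $Sh(\omega+1)$ match isomorphically under $\mu$, the singleton at the terminal $1$ matches, and the tail after it is empty on both sides. The only interval that genuinely differs is the ``large'' one sitting above the largest $Sh(\omega+1)$-element of $\bar{a}$: in $R$ it has the form $F+Sh(\omega+1)$ when $\bar{a}$ contains the terminal $1$, and $F+Sh(\omega+1)+1$ otherwise, while in $L$ it is $F+Sh(\omega+1)+1+Sh(\omega)$ or $F+Sh(\omega+1)+1+Sh(\omega)+1$, respectively. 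Here $F$ is the post-element remainder of the block containing the largest $Sh(\omega+1)$-element of $\bar{a}$, and is the same on both sides by construction.

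By additivity of $\leq_4$ over linear-order sums, both interval comparisons reduce to the single sub-claim
\[Sh(\omega+1) \leq_4 Sh(\omega+1)+1+Sh(\omega).\]
To establish this, I would invoke the absorption identity $Sh(\omega+1) \cong Sh(\omega+1)+1+Sh(\omega+1)$, which is a special case of the shuffle identities recorded before Theorem \ref{ush}. Substituting on the left yields
\[Sh(\omega+1)+1+Sh(\omega+1) \leq_4 Sh(\omega+1)+1+Sh(\omega),\]
and this follows by additivity from $Sh(\omega+1) \leq_4 Sh(\omega)$ (Lemma \ref{lem:4above}).

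The principal obstacle is the routing choice for Spoiler's terminal $1$. Sending it to the middle $1$ of $L$ would leave a nonempty tail $Sh(\omega)+1$ to be matched against the empty tail of $R$, which would lose for Duplicator at the next round. Routing to the terminal $1$ of $L$ instead absorbs the extra $1+Sh(\omega)$ into the unique large offending interval, which the absorption identity for $Sh(\omega+1)$ together with Lemma \ref{lem:4above} then controls.
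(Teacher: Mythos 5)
Your proposal is correct and takes essentially the same approach as the paper's proof. Both send the initial $Sh(\omega+1)$ along the natural embedding, route the terminal $1$ of the smaller ordering to the terminal $1$ of the larger one, observe that only the tail interval genuinely differs, and reduce that comparison to $Sh(\omega+1)\leq_4 Sh(\omega)$ (Lemma \ref{lem:4above}) via the absorption identity $Sh(\omega+1)\cong Sh(\omega+1)+1+Sh(\omega+1)$ and additivity of $\leq_4$ over sums.
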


\begin{proof}
We describe a winning strategy for Duplicator in the back-and-forth game.
Let 
\[\nu: Sh(\omega+1)\to Sh(\omega+1)+1+Sh(\omega)+1\]
be the natural initial embedding.
Say that Spoiler plays the tuple $\bar{a}$, which without loss of generality includes the largest element $c$, on their first move.
Duplicator responds with the play $\nu(\bar{a})$ for the elements that are not the largest and the largest element $d$ in response $c$.
The intervals defined by $\bar{a}^\frown c$ are isomorphic to those defined by $\nu(\bar{a})^\frown d$ in every case but the segment between the largest element in $\bar{a}$ (resp. $\nu(\bar{a})$) and $c$ (resp. $d$).
If the largest value of $\bar{a}$ is within an $\omega$-block, this segment is isomorphic to $\omega+Sh(\omega+1)$; in this case, the segment in the other ordering is isomorphic to $\omega+Sh(\omega+1)+Sh(\omega)$.
If the largest value of $\bar{a}$ is not in an $\omega$ block, this segment is isomorphic to $n+Sh(\omega+1)$ for some $n\in\omega$; in this case, the segment in the other ordering is isomorphic to $n+Sh(\omega+1)+Sh(\omega)$.
The rest of the argument follows exactly as in Proposition \ref{prop:5below}.

\end{proof}

\begin{corollary}\label{cor:5hardV2}
The set $([Sh(\omega+1)+1]_\cong,[Sh(\omega+1)+1+Sh(\omega)+1]_\cong)$ is $(\mathbf{\Pi}_5^0,\mathbf{\Sigma}_5^0)$ hard.
\end{corollary}

\begin{proof}
This follows from Proposition \ref{prop:5belowV2} and Theorem \ref{thm:bfPoS}.
\end{proof}

\begin{theorem}
The set of linear orderings $(L,<)$ such that $(L,<,s,p)$ is homogeneous is $\mathbf{\Pi}^0_5$-hard.
\end{theorem}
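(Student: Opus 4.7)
The plan is to leverage Corollary \ref{cor:5hard} directly: it already establishes that the pair $([Sh(\omega+1)]_\cong, [Sh(\omega+1)+Sh(\omega)]_\cong)$ is $(\mathbf{\Pi}_5^0,\mathbf{\Sigma}_5^0)$-hard. The only remaining task is to verify that these two linear orderings sit on opposite sides of the $sp$-homogeneity boundary, and then to translate the pair hardness into a $\mathbf{\Pi}_5^0$-hardness statement for the set in question.

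First, I would note that $Sh(\omega+1)$ is $sp$-homogeneous by direct citation of Theorem \ref{ush}, since shuffle sums over subsets of $\omega \cup \{\omega, \omega^*, \zeta\}$ are always $sp$-homogeneous. Second, I would verify that $Sh(\omega+1)+Sh(\omega)$ is \emph{not} $sp$-homogeneous. The most efficient way is to quote the structural characterization in Theorem \ref{thm:homogChar}: such an ordering would need to be decomposed into a disjoint family of shuffle intervals together with single blocks, but the block type $\omega$ appears densely in both the left and right summands, so no single shuffle interval $I_v$ contains all $\omega$-blocks, contradicting the required disjointness of the $A_v$. Alternatively, one can exhibit the same explicit witness used inside the proof of Theorem \ref{homcplx}(1): take a pair $[x_1]<[y_1]$ from the left summand and $[x_2]<[y_2]$ from the right summand with matching block types, and note that the finite-partial isomorphism sending one pair to the other is not extendable to an automorphism, because exactly one of the intervals $(x_i,y_i)$ contains an $\omega$-block.

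Finally, I would assemble these pieces. Given any $\mathbf{\Pi}_5^0$ set $S \subseteq \omega^\omega$, the pair hardness from Corollary \ref{cor:5hard} supplies a continuous function $f\colon\omega^\omega \to Mod(LO)$ such that $x \in S$ forces $f(x) \cong Sh(\omega+1)$ and $x \notin S$ forces $f(x) \cong Sh(\omega+1)+Sh(\omega)$. By the two observations above, $f$ is a continuous reduction from $S$ to the set of $sp$-homogeneous linear orderings, which is precisely the required $\mathbf{\Pi}_5^0$-hardness.

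There is no substantive obstacle here: all of the technical work has already been done in Lemma \ref{lem:4above} and Proposition \ref{prop:5below}. The only point that deserves any care is the non-$sp$-homogeneity of $Sh(\omega+1)+Sh(\omega)$, and both Theorem \ref{thm:homogChar} and the explicit witness argument make this immediate.
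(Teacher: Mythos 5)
Your approach matches the paper's exactly: both invoke Corollary~\ref{cor:5hard} for the pair hardness and then verify that $Sh(\omega+1)$ is $sp$-homogeneous (by Theorem~\ref{ush}) while $Sh(\omega+1)+Sh(\omega)$ is not, and your explicit-witness argument for the latter is the one the paper actually uses.

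One small factual slip in your alternative route via Theorem~\ref{thm:homogChar}: you assert that ``the block type $\omega$ appears densely in both the left and right summands,'' but this is false --- $Sh(\omega)$ contains only finite blocks, so $\omega$-blocks occur only in the left summand $Sh(\omega+1)$. The correct contradiction is instead that a fixed \emph{finite} block size $n$ appears densely in both summands, forcing any candidate interval $I_n$ from the characterization to span both summands; but then $I_n$ cannot be a shuffle sum, since the $\omega$-blocks that co-occur with the $n$-blocks on the left are not dense throughout $I_n$. The issue is failure of the shuffle-sum structure of $I_n$, not disjointness of the $A_v$. Since your second, explicit-witness argument is correct and self-contained, this slip does not compromise the proof.
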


\begin{proof}
This is witnessed by the reduction given in Corollary \ref{cor:5hard}.
In particular, note that $Sh(\omega+1)$ is homogeneous as a $\{<,s,p\}$ structure while $Sh(\omega+1)+Sh(\omega)$ is not because the finite blocks in $Sh(\omega+1)$ are not automorphic to those of the same size appearing in $Sh(\omega)$.
\end{proof}

We now prove a short, technical lemma that will allow us to upgrade our previous theorem to a statement about $\mathbf{\Sigma}^0_6$ and weakly $sp$-homogeneous linear orderings.

\begin{lemma}\label{lem:LOsumCont}
Given a sequence of continuous functions $f_i:X\to Mod(LO)$ reducing $(A_i,B_i)$ to $(C_i,D_i)$ and a countable linear ordering $\mathcal{L}$, there is a continuous map $F_\mathcal{L}:X\to Mod(LO)$ such that
\[F(x) = \sum_{i\in\mathcal{L}} f_i(x). \]
\end{lemma}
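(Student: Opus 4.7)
\textbf{The plan} is to realize $F_\mathcal{L}(x)$ as a fixed presentation on $\omega$ whose atomic diagram is built coordinate-wise from the atomic diagrams of the $f_i(x)$. Writing $L$ for the underlying set of $\mathcal{L}$, regarded as a subset of $\omega$, I would fix once and for all a bijection $\psi\colon\omega\to L\times\omega$. Then define $F_\mathcal{L}(x)\in Mod(LO)$ by stipulating, for $n,m\in\omega$ with $\psi(n)=(i,a)$ and $\psi(m)=(j,b)$, that $n<_{F_\mathcal{L}(x)}m$ holds if and only if either $i<_\mathcal{L} j$, or $i=j$ and $a<_{f_i(x)} b$. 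This is nothing more than the standard way of presenting a sum of orderings along an index ordering as a single linear ordering on $\omega$.

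The first step is to verify that $F_\mathcal{L}(x)$ is a well-defined element of $Mod(LO)$ whose isomorphism type is $\sum_{i\in\mathcal{L}}f_i(x)$. This is immediate from the definition of an ordered sum: $\psi$ partitions $\omega$ into the fibers $\psi^{-1}(\{i\}\times\omega)$, on each of which the order agrees with $f_i(x)$, and comparisons between distinct fibers are decided by $\mathcal{L}$.

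The substantive step is continuity. A subbasic open set of $Mod(LO)$ is specified by fixing the truth value of a single atomic comparison $n<m$. For our construction each such comparison falls into one of two cases. If $\psi(n)$ and $\psi(m)$ lie in distinct fibers, the truth value depends only on $\psi$ and $\mathcal{L}$ and not on $x$, so its preimage is either $X$ or $\emptyset$. If they lie in a common fiber $\{i\}\times\omega$, the truth value is exactly that of a single atomic comparison in $f_i(x)$, and by continuity of $f_i$ its preimage in $X$ is open. Since each subbasic preimage is open, $F_\mathcal{L}$ is continuous.

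\textbf{Main obstacle.} There is no genuine obstacle here beyond bookkeeping; the proof is essentially the observation that ordered sums can be assembled on a fixed universe in a coordinate-by-coordinate fashion, and that each atomic fact about the resulting structure is determined either by $\mathcal{L}$ alone or by a single atomic fact about one of the summands. Notice that the reduction-of-pairs hypothesis on the $f_i$ plays no role in establishing continuity of $F_\mathcal{L}$; it is included in the lemma because it is the form in which the construction will be applied downstream in the paper, where one will want to conclude that $F_\mathcal{L}$ reduces a suitable combined pair of sets built from the $(A_i,B_i)$ and $(C_i,D_i)$.
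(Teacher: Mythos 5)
Your proposal is correct and matches the paper's proof: both realize $F_\mathcal{L}(x)$ by defining its atomic diagram coordinate-by-coordinate via a fixed pairing of $\omega$ with $L\times\omega$, so that each atomic fact is decided either by $\mathcal{L}$ alone or by a single atomic fact about one $f_i(x)$, which immediately gives continuity. The only difference is that you spell out the subbasic-open-set check that the paper leaves as "straightforward to confirm."
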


\begin{proof}
$F$ can be explicitly constructed out of the functions $f_i$.
Let
\[
F(x)(j) = \begin{cases}
f_i(x)(\langle k, k'\rangle ) \text{ if }  j=\langle \langle i,k \rangle,  \langle i,k' \rangle \rangle \\
1 \text{ if } j= \langle \langle \ell,k \rangle,  \langle \ell',k' \rangle \rangle \text{ and } \ell<_{\mathcal{L}} \ell' \\
0 \text{ if } j= \langle \langle \ell,k \rangle,  \langle \ell',k' \rangle \rangle \text{ and } \ell>_{\mathcal{L}} \ell'. 
\end{cases}
\]
It is straightforward to confirm that this map is continuous, as all of the $f_i$ are continuous.
Furthermore, it is explicitly constructed to have the desired property that $F(x) = \sum_{i\in\mathcal{L}} f(x)$.
\end{proof}

\begin{theorem}
The set of linear orderings $(L,<)$ such that $(L,<,s,p)$ is weakly homogeneous is $\mathbf{\Sigma}_6$ hard.
\end{theorem}

\begin{proof}
Let $A\subset X$ be a $\mathbf{\Sigma}_6$ set.
Write $A=\bigcup_{i\in\omega} B_i$ such that each $B_i$ is $\mathbf{\Pi}_5^0$.
Without loss of generality, $B_i\subseteq B_{i+1}$.
Let $f_{i}$ be the reduction witnessing this hardness for the set $(B_i,B_i^c)$ from Corollary \ref{prop:5belowV2}.
Let $\mathcal{L}$ be the standard copy of $\omega$ given by $0<1<2<3<\cdots$.
Apply Lemma \ref{lem:LOsumCont} to the sequence $f_i$ and the linear ordering $\mathcal{L}$.

Let $x\in A$.
In this case, $x\in B_a$ for some least $a$.
This means that $x\in B_i$ exactly for each $i\geq a$.
Therefore, $f_i(x)\cong Sh(\omega+1)+1$ for each $i\geq a$ while $f_i(x)\cong Sh(\omega+1)+1+Sh(\omega)+1$ for each $i< a$.
This gives that
\[
F(x) = \sum_{i\in\mathcal{L}} f_i(x) \cong\sum_{i<a}\Big(Sh(\omega+1)+1+Sh(\omega)+1\Big) +\sum_{i\geq a}\Big(Sh(\omega+1)+1\Big)\cong
\]
\[
(Sh(\omega+1)+1+Sh(\omega)+1)\cdot a + (Sh(\omega+1)+1)\cdot\omega \cong (Sh(\omega+1)+1+Sh(\omega)+1)\cdot a + Sh(\omega+1).
\]
Name the parameters given by each of the $2a$ many "1"s that appear in $(Sh(\omega+1)+1+Sh(\omega)+1)\cdot a$.
It is enough to show that each of the intervals between these parameters is homogeneous as a $\{<,s,p\}$ structure to demonstrate that this ordering is weakly homogeneous as a $\{<,s,p\}$ structure.
However, this is immediate, as each such interval is given by a shuffle sum.

Let $x\not\in A$.
This means that $x\not\in B_i$ for each $i$.
Therefore, $f_i(x)\cong Sh(\omega+1)+1+Sh(\omega)+1$ for each $i$.
This gives that
\[
F(x) = \sum_{i\in\mathcal{L}} f_i(x) \cong \sum_{i\in\omega} Sh(\omega+1)+1+Sh(\omega)+1 \cong (Sh(\omega+1)+1+Sh(\omega)+1)\cdot\omega.
\]
Given any finite tuple of parameters, the set of elements greater than all of them in $F(x)$ has infinitely many summands isomorphic to $Sh(\omega+1)$ and infinitely many alternating summands isomorphic to $Sh(\omega)$.
It is direct to show that a finite block in one of these $Sh(\omega)$ summands is not automorphic to the same-sized finite block in a $Sh(\omega+1)$ summand.
Therefore, $F(x)$ is not weakly $sp-$homogeneous, and we have constructed our desired reduction.
\end{proof}

Here are boldface versions of the simpler results from Section \ref{sec:index}.
In particular, the boldface approach is just as robust as the lightface analysis from that section.
We work in the space of relations on $\omega$, which is essentially $2^{\bN}$.

\begin{proposition} \label{Le1B} Let $n>1$ be finite.
\begin{enumerate}
\item $\{L: \text{$L$ is a linear ordering}\}$ is a closed set.
\item $Copies(\eta)$ is $\mathbf{\Pi}^0_2$ complete.
\item  $Copies(n \cdot \eta)$ is $\mathbf{\Pi}^0_3$ complete.
\item $Copies( \omega \cdot \eta )$,  $Copies( \omega^* \cdot \eta )$, and $Copies( \zeta \cdot \eta )$ are all $\mathbf{\Pi}^0_4$ complete.
\item $Copies(\omega)$, $Copies(\omega^*)$, and $Copies(\zeta)$ are all $\mathbf{\Pi}^0_3$ complete.
\end{enumerate}
\end{proposition}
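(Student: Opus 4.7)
Each item splits into an upper bound and a hardness claim. The upper bounds are immediate boldface transliterations of the lightface ones in Proposition \ref{Le1}: the defining formulas used there are Boolean combinations of open and closed conditions in $Mod(LO) \subseteq 2^{\omega\times\omega}$, with quantifier alternations exactly matching the stated $\mathbf{\Pi}^0_\alpha$ levels. In particular, $Mod(LO)$ itself is an intersection of closed sets (one for each linear-order axiom), giving (1).

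For the hardness claims I plan to apply the boldface pair-of-structures theorem (Theorem \ref{thm:bfPoS}): to show that $Copies(\A)$ is $\mathbf{\Pi}^0_\alpha$-hard it suffices to produce a linear ordering $\B \not\cong \A$ with $\B \leq_\alpha \A$, for then $(Copies(\B), Copies(\A))$ is $(\mathbf{\Sigma}^0_\alpha, \mathbf{\Pi}^0_\alpha)$-hard and $Copies(\A)$ inherits $\mathbf{\Pi}^0_\alpha$-hardness. In each case I take $\B$ to be the ordering produced by the ``other branch'' of the corresponding lightface reduction in Proposition \ref{Le1}: for (2), $\B=\zeta$ and the inequality to verify is $\zeta \leq_2 \eta$; for (3), $\B=\eta$ and I verify $\eta \leq_3 n\cdot\eta$; for (4), $\B\in\{\omega^2\cdot\eta,\,(\omega^*)^2\cdot\eta,\,\zeta^2\cdot\eta\}$ against the matching $\A\in\{\omega\cdot\eta,\,\omega^*\cdot\eta,\,\zeta\cdot\eta\}$; for (5), $\B\in\{\omega\cdot 2,\,\omega^*\cdot 2,\,\zeta\cdot 2\}$ against $\A\in\{\omega,\,\omega^*,\,\zeta\}$.

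Each back-and-forth verification follows the same template supplied by the inductive characterization of $\leq_\alpha$ stated earlier in the excerpt. Any Spoiler partition of the right-hand side produces intervals of a few canonical shapes---singletons, empty pieces forced by successor pairs, initial or final segments of blocks, and dense convex regions of the shuffle---and a Duplicator strategy on the left matches them piece by piece, closing via the two useful observations that the empty ordering is $\leq_\beta$-below every ordering and that any infinite piece on the left trivially $\leq_1$-absorbs its right-hand counterpart. The innermost $\leq_2$ base cases are handled by Theorem \ref{thm:2bnf}: for example, $\omega \equiv_2 \omega\cdot 2$ because both orderings have first block of size $\omega$ and no last element. The requirements $\B\not\cong\A$ are immediate from elementary block-size or block-ordering considerations. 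The main obstacle is (4), whose level-4 verification operates on a shuffle of non-trivial blocks and requires tracking Duplicator's choices across two nested partition rounds; this is the only calculation that genuinely goes beyond a mechanical rehearsal of the (3) and (5) arguments, and it is most cleanly organized by reducing it to the one-level-lower analogue already handled in (3).
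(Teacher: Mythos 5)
The approach is right in outline—the boldface pair-of-structures theorem reduces hardness to back-and-forth inequalities—but the choices you make for parts (2)–(4) differ from the paper's, and your choice in (3) is simply false. The paper uses $\eta+1\leq_2\eta$, $Sh(n,n-1)\leq_3 n\cdot\eta$, $Sh(\omega,\omega\cdot 2)\leq_4\omega\cdot\eta$, and $\omega\cdot 2\leq_3\omega$. Your $\zeta\leq_2\eta$ for (2) and $\omega\cdot 2\leq_3\omega$ for (5) are fine; your choice $\omega^2\cdot\eta$ for (4) also turns out to be usable. But your choice $\eta\leq_3 n\cdot\eta$ for (3) does not hold.

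The root cause is a reversal in your closing heuristics. You claim ``the empty ordering is $\leq_\beta$-below every ordering,'' but under the convention quoted from \cite{AK00} ($\A\leq_1\B$ iff $\A$ is infinite or at least as large as $\B$), the inequality goes the other way: $\A\leq_\beta\emptyset$ for every $\A$, while $\emptyset\leq_\beta\A$ only if $\A=\emptyset$. Likewise the statement that ``any infinite piece on the left trivially $\leq_1$-absorbs its right-hand counterpart'' has it backwards: if the Duplicator-side interval $\A_i$ is infinite but the Spoiler-side interval $\B_i$ is nonempty and finite, then $\B_i\leq_1\A_i$ fails. In (3) this bites immediately: since $n>1$, $n\cdot\eta$ contains a successor pair $p<p'$, so Spoiler can play $p,p'$ and force the interval $(p,p')=\emptyset$; Duplicator must then produce adjacent $q<q'$ in $\eta$ with $(q,q')=\emptyset$, which is impossible in a dense ordering. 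Hence $\eta\not\leq_2 n\cdot\eta$, a fortiori $\eta\not\leq_3 n\cdot\eta$. (Equivalently: density of $\eta$ is a $\Pi_2$ sentence that $n\cdot\eta$ fails, so $\Pi_2\text{-Th}(\eta)\not\subseteq\Pi_2\text{-Th}(n\cdot\eta)$.) You have also misidentified the ``other branch'' of the lightface reduction for (3): it is $n\cdot\eta+n+\dotsb+n\cdot\eta+n+\eta+n+\eta+\cdots$, not $\eta$. The fix is to use an $\A$ that shares the local successor structure of $n\cdot\eta$, which is exactly why the paper takes $Sh(n,n-1)$.

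Since your part (4) is ``cleanly organized by reducing it to the one-level-lower analogue already handled in (3),'' and (3) is wrong, the argument you sketch for (4) is not reliable as written even though the statement $\omega^2\cdot\eta\leq_4\omega\cdot\eta$ is in fact true; the essential difference from (3) is that a Spoiler partition of $\omega\cdot\eta$ never isolates an interval that looks like a single block, so the obstruction that kills (3) does not recur. If you keep your witnesses for (4), verify $\omega^2\cdot\eta\leq_4\omega\cdot\eta$ directly rather than by analogy with (3), or just use the paper's $Sh(\omega,\omega\cdot 2)$, whose blocks are all of type $\omega$ as in $\omega\cdot\eta$, making the interval matching transparent.
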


\begin{proof} The upper bounds on the complexity are easy to see.

\medskip

For the completeness:

(2), (3), and (5) can all be seen as corollaries of Gonzalez and Rossegger's characterization of Scott complexities under $\Sigma_4$ for linear orderings (see \cite{GR23}).
That said, we also provide more direct arguments here.

(2) It is enough to observe that $\eta\geq_2\eta+1$ by the characterization of Montalb\'an stated in Theorem \ref{thm:2bnf} and apply Theorem \ref{thm:bfPoS}.

(3) We show that $n\cdot\eta\geq_3Sh(n,n-1)$ and apply Theorem \ref{thm:bfPoS}.
On the first move of the game, we may assume that Spoiler plays a sequence of entire $n$ blocks $\bar{p}_1,\cdots,\bar{p}_n$.
Duplicator may play along the natural embedding $\iota:n\cdot\eta\to Sh(n,n-1)$ whose image is the blocks of size $n$.
The resulting intervals require that $n\cdot\eta\leq_2Sh(n,n-1)$ for this to be a winning move for Duplicator.
This relation is readily confirmed by the characterization of Montalb\'an \cite{McountingBF}.
Because these orderings have a bound on the size of their successor chains, we cannot directly apply Theorem \ref{thm:2bnf} as stated (though this case is still covered in Montalb\'an's more general work).
For a more concrete approach, it is easy to confirm that if the Spoiler plays an increasing sequence of finite blocks, the Duplicator can respond with a sequence of increasing tuples where the blocks of size $n-1$ are responded to with the first $n-1$ elements of a block of size $n$ and the blocks of size $n$ are responded to with a block of size $n$.

\medskip

(4) We only analyze $Copies( \omega \cdot \eta )$ as the other arguments are very similar.
We show that $\omega\cdot\eta\geq_4Sh(\omega,\omega\cdot2).$
Here, we write $Sh(\omega,\omega\cdot2)$ as shorthand for $Sh(\{\omega,\omega\cdot2\})$
On the first move of the game, the Spoiler plays initial segments of some $\omega$ blocks $\bar{p}_1,\ldots,\bar{p}_n$.
The Duplicator may play along the natural embedding $\iota:\omega\cdot\eta\to Sh(\omega,\omega\cdot2)$ whose image is the shuffled in $\omega$.
The resulting intervals require that $\omega+\omega\cdot\eta\leq_3\omega+Sh(\omega,\omega\cdot2)$ for this to be a winning move for Duplicator.
Say that on this move the Spoiler plays increasing initial segments of some $\omega$ blocks $\bar{q}_1,\dots,\bar{q}_n$.
Duplicator will play increasing initial segments of some $\omega$ blocks of the same size in response.
Analyzing intervals, it is required that both $\omega+\omega\cdot\eta\geq_2\omega$ and $\omega+\omega\cdot\eta\geq_2\omega+Sh(\omega,\omega\cdot2)$ for this to be a winning move for Duplicator.
Both of these relations follow from Theorem \ref{thm:2bnf}. 

\medskip

(5) Observe that $\omega\geq_3\omega\cdot2$, $\omega^*\geq_3\omega^*\cdot2$, and $\zeta\geq_3\zeta\cdot2$ all follow from the classical analysis of Ash (see e.g. \cite{cst1} Lemma II.38). Apply Theorem \ref{thm:bfPoS}.

\end{proof}

\section{Categoricity level of $sp-$homogeneous linear orderings}

In this section, we classify the \textit{sp}-homogeneous linear orderings by their level of relative categoricity.
In other words, for each \textit{sp}-homogeneous ordering, we find the least $n$ such that it is relatively $\Delta_n$ categorical based on easily checkable structural properties of the ordering.
We show that $sp$-homogeneous linear orderings are always relatively $\Delta_4$ categorical.
Because $\Delta_1$ and $\Delta_2$ relative categoricity for linear orderings is so explicitly understood, this comes down to distinguishing the $\Delta_3$ relatively computably categorical linear orderings from those that are only $\Delta_4$ relatively computably categorical.

We make use of the key result of \cite{AC17}, which shows that there is an interesting connection between weak homogeneity and computability-theoretic properties of a structure.

\begin{theorem}\label{thmAC}
\begin{enumerate}
\item If a structure $\A$ is weakly homogeneous, then it is relatively $\Delta^0_2$ categorical.
\item If a structure $\A$ is locally finite and weakly homogeneous, then it is relatively computably categorical.
\end{enumerate}
\end{theorem}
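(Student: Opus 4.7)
The plan is to build the isomorphism by a back-and-forth argument starting from a correct image of the exceptional tuple, absorbing the non-computability into a single $\emptyset'$-query.

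Fix an exceptional tuple $\bar{a} = (a_1,\dots,a_n)$ for $\A$ and let $\B$ be an isomorphic copy with atomic diagram accessible. The central lemma is: if $\bar{b} \in \B$ equals $\psi(\bar{a})$ for some isomorphism $\psi: \A \to \B$, then the naive computable back-and-forth from $\bar{a} \mapsto \bar{b}$ never gets stuck. Inductively, suppose $f_s \supseteq \{\bar{a} \mapsto \bar{b}\}$ is a finite partial isomorphism already known to extend to a full isomorphism. For the next $a \in \A$, pick any $b' \in \B$ making $f_s \cup \{(a,b')\}$ a partial isomorphism (one exists, because some extending isomorphism provides a witness). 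To see that the new map also extends to an isomorphism, observe that $\bar{b} = \psi(\bar{a})$ is an exceptional tuple for $\B$ by isomorphism invariance of weak homogeneity, so there is an automorphism of $\B$ fixing $\text{ran}(f_s)$ pointwise and moving the ``wrong'' image to $b'$; composing this with the extending isomorphism produces the desired extension.

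For part (1), I use this lemma in a Shoenfield-style limit construction. Enumerate candidate tuples $\bar{b}^{(0)}, \bar{b}^{(1)}, \dots$ in $\B$. At stage $s$, mark $\bar{b}^{(k)}$ as provisionally bad if, within $s$ computation steps, the back-and-forth from $\bar{b}^{(k)}$ has failed to produce a matching element at some substage $\leq s$; let $k_s$ be the least non-bad index and let $g(s,x)$ be the image of $x$ in the stage-$s$ back-and-forth from $\bar{b}^{(k_s)}$. The index $k_s$ is nondecreasing in $s$ and bounded above by the least truly correct index $k^\ast$ (which exists: pick $\psi(\bar{a})$ for any isomorphism $\psi$), so $k_s$ stabilizes at $k^\ast$. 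Once $k_s$ is constant, each $g(s,x)$ stabilizes as the back-and-forth proceeds, producing $f \in \Delta^0_2(\A \oplus \B)$.

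For part (2), local finiteness lets the $\emptyset'$-limit collapse to an effective finitary computation. Since finitely generated substructures are finite and effectively recoverable from the atomic diagram, atomic-type comparisons needed to verify a candidate $\bar{b}$ are bounded searches, and the ``correctness'' test on $\bar{b}$ becomes decidable rather than merely $\Pi^0_2$. Thus a correct $\bar{b}$ is located in finite time and the back-and-forth from it runs computably.

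The main obstacle is cleanly proving the extension lemma of the first paragraph. The delicate point is that when one invokes weak homogeneity in $\B$, the partial automorphism produced must fix not just the exceptional tuple $\bar{b}$ but all of $\text{ran}(f_s)$; this in turn requires that $\psi(\bar{a})$ be genuinely exceptional in $\B$, which is not automatic from weak homogeneity of $\A$ alone and must be transported along $\psi$ explicitly. Once this extension lemma is in hand, both parts follow from the parallel-search/limit template above, with the only difference being the effectiveness of the verification that a candidate is correct.
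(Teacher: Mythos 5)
Your extension lemma is correct and is the right structural fact to extract: once a finite partial isomorphism contains $\bar a\mapsto\psi(\bar a)$, any quantifier-free-type-preserving one-step extension can be repaired to a genuine extension by invoking weak homogeneity of $\B$ over $\psi(\bar a)$, and your worry about having to transport exceptionality along $\psi$ is well placed and correctly handled. (Do be careful that ``partial isomorphism'' must mean quantifier-free-type preservation, so that the map you hand to weak homogeneity really is an isomorphism of the finitely \emph{generated} substructures; for a language with function symbols, mere atomic agreement on the finite tuples is not enough.)

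The gap is in the Shoenfield-limit bookkeeping for part (1). You claim $k_s$ is nondecreasing, but ``provisionally bad at stage $s$'' is not a stable observation: a candidate $\bar b^{(k)}$ can look bad at stage $s$ (no $b'<s$ has survived $s$ steps of the $\Pi^0_1$ qf-type check) and then stop looking bad at stage $s+1$ when a fresh $b'$ appears, so $k_s$ can drop. If instead you make ``bad'' permanent, a genuinely correct $\bar b^{(k)}$ may be discarded because the search for its matching $b'$ timed out, and there is no recovery mechanism. The root problem is that ``the greedy back-and-forth from $\bar b^{(k)}$ gets stuck'' is a $\Pi^0_2$ event, so the naive limit protocol does not obviously converge; making it converge essentially re-proves Chisholm's theorem. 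The standard route avoids the explicit construction: weak homogeneity over $\bar a$ says the orbit of each $\bar c$ over $\bar a$ is determined by the quantifier-free type of $\bar a\bar c$, so the formulas ``$\bar x$ has the same qf-type over $\bar a$ as $\bar c$'' form a $\Pi^{\mathrm{in}}_1$ (hence formally $\Sigma^0_2$) Scott family, and Ash--Knight--Manasse--Slaman/Chisholm gives relative $\Delta^0_2$ categoricity. For part (2), local finiteness compresses each such qf-type into a single finitary existential formula (``$\exists\bar y$: $\bar a\bar x\bar y$ is closed under the functions and has this specific finite atomic diagram''), yielding a formally $\Sigma^0_1$ Scott family and hence relative computable categoricity. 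Your back-and-forth has exactly the right intuition, but the convergence of the limit is where the real work is, and it is cleaner to appeal to the Scott-family characterization than to redo it by hand.
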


The functions $s$ and $p$ are computable in $2$ jumps of the structure (even though the partial functions that are undefined when the successor or predecessor does not exist will be partial computable in only one jump). By the above, any weakly homogeneous 
\textit{sp}-linear ordering must be relatively $\Delta^0_2$ categorical.
Note that this means the underlying linear ordering without the successor and predecessor functions is relatively $\Delta^0_4$ categorical, as follows. 

\begin{proposition} Suppose that $(L,s,p)$ is weakly homogeneous. Then $L$ is relatively $\Delta^0_4$ categorical.  
\end{proposition}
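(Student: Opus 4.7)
The plan is to combine two facts: Theorem \ref{thmAC}(1) says that weak homogeneity of $(L,s,p)$ gives relative $\Delta^0_2$ categoricity of the \emph{expanded} structure, and the paper has already observed that $s$ and $p$ are computable in $L''$ for any copy $L$. Putting these together with one jump to compute $s$ and $p$, and a further jump to get the isomorphism, will yield a $\Delta^0_4$ isomorphism relative to $L_1\oplus L_2$.

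More concretely, I would proceed as follows. Given two copies $L_1$ and $L_2$ of the plain linear ordering underlying $L$, I would first argue that the successor and predecessor functions $s_i,p_i$ on $L_i$ are uniformly computable from $L_i''$. Indeed, for $x,y\in L_i$, the statement ``$y=s_i(x)$'' says that $y>x$ and there is no $z$ strictly between them, a $\Pi^0_1(L_i)$ condition; detecting whether a successor exists at all is $\Sigma^0_2(L_i)$, so the total function $s_i$ (defined to be $x$ when $x$ has no successor) is $\Delta^0_2(L_i')\subseteq\Delta^0_3(L_i)$, and in any case is computable in $L_i''$. The same goes for $p_i$. So the atomic diagram of $(L_i,s_i,p_i)$ is computable from $L_i''$, uniformly in $L_i$.

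Next, I would invoke Theorem \ref{thmAC}(1): since $(L,s,p)$ is weakly homogeneous, it is relatively $\Delta^0_2$ categorical. Hence, viewing $(L_1,s_1,p_1)$ and $(L_2,s_2,p_2)$ as two copies of $(L,s,p)$, there is an isomorphism $f\colon L_1\to L_2$ that is $\Delta^0_2$ in the join of their atomic diagrams, i.e., computable from $\bigl((L_1,s_1,p_1)\oplus(L_2,s_2,p_2)\bigr)'$. Since the expanded diagrams are computable in $L_1''\oplus L_2''\leq_T(L_1\oplus L_2)''$, the isomorphism $f$ is computable in $(L_1\oplus L_2)'''$, which is exactly $\Delta^0_4$ relative to $L_1\oplus L_2$. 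As $f$ is simultaneously an isomorphism of the plain linear orderings $L_1\cong L_2$, this witnesses relative $\Delta^0_4$ categoricity of $L$.

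There is no real obstacle here; the proof is a routine bookkeeping of Turing jumps. The only point requiring care is the uniformity of the construction of $s_i,p_i$ from $L_i''$ and the observation that the isomorphism of expanded structures is also an isomorphism of the plain structures. It is worth remarking that the bound $\Delta^0_4$ from this argument is not necessarily optimal for all weakly sp-homogeneous orderings; the subsequent sections of the paper refine this to distinguish the $\Delta^0_3$ from $\Delta^0_4$ cases via finer structural conditions.
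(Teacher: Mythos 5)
Your proof is correct and follows essentially the same route as the paper's: compute $s,p$ from $L_i''$, then invoke Theorem \ref{thmAC}(1) to get an isomorphism of the expanded structures from one further jump, and observe this is also an isomorphism of the reducts. You merely spell out the jump-counting and uniformity details that the paper leaves implicit.
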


\begin{proof} Let $L_1$ and $L_2$ be isomorphic copies of $L$. Then $s$ and $p$ are computable in $L_i$ from $L_i^{''}$. Since $L$ is weakly homogeneous, an isomorphism between $L_1$ and $L_2$ may be computed from $(L_1,s,p)^{'}$ and $(L_2,s,p)^{'}$ and hence from $L_1^{'''}$ and $L_2^{'''}$. 
\end{proof}

Note that not every relatively $\Delta_4$ or even relatively $\Delta_3$ categorical linear ordering is weakly $sp$-homogeneous.
Consider the examples from Corollary \ref{wspcor}.
It follows from a result of Frolov and Zubkov \cite{FZ24} that these examples are relatively $\Delta_4$ categorical.
More than this general upper bound, a recent preprint of the first four authors shows that these $\zeta$ representations are often relatively $\Delta_3$ categorical.
This shows that, while weakly $sp$-homogeneous orderings represent a large class, they do not completely subsume the class of relatively $\Delta_4$ or even relatively $\Delta_3$ categorical linear orderings.

The rest of this section is dedicated to understanding when a weakly $sp$-homogeneous linear ordering achieves its maximal level of complexity.
We will make use of the following auxiliary definitions for useful relations.

\begin{definition}
    Let $P_n$ be a unary predicate that holds of the elements in a linear ordering with $n$ predecessors.
    Let $S_n$ be a unary predicate that holds of the elements in a linear ordering with $n$ successors.
    Let $Adj_n$ be a binary predicate that holds of pairs of elements with exactly $n$ elements in between them.
\end{definition}

Note that $P_n$ and $S_n$ are $\Sigma_2$ definable.
$Adj_n$ is $\dSinf{1}$ definable.

To show that an ordering $\L$ is uniformly, relatively $\Delta_3$ categorical, it is equivalent to show that there is a computable list of computable $\Sigma_3$ formulas that define the orbits of all of the tuples in $\L$.
Such a set is called a $\Sigma_3$ Scott family.
To show that an ordering $\L$ is relatively $\Delta_3$ categorical, it is equivalent to show that there is a computable list of computable $\Sigma_3$ formulas over some finite set of parameters $\bar{p}\in \L$ that define the orbits of all of the tuples in $\L$.
Such a set is called a $\Sigma_3$ Scott family over $\bar{p}$.
See \cite{cst2} Chapter VII for more details on Scott families.
We will work with Scott families instead of considering $\Delta_3$ maps between structures directly.

\begin{proposition}\label{prop:Delta3begin}
    The following $sp-$homogeneous linear orderings are uniformly, relatively $\Delta_3$ categorical:
    \begin{enumerate}
        \item Those with only finite blocks;
        \item Those with $\omega$ blocks, $\omega^*$ blocks and finite blocks of bounded size;
        \item Those with $\zeta$ blocks and finite blocks of bounded size.
    \end{enumerate}
\end{proposition}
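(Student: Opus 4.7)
The plan is to exploit the correspondence from Section \ref{sec:colorings} between sp-homogeneous orderings and homogeneous colored linear orderings. In each of the three cases, the block coloring turns out to be uniformly $\Delta_3^0$-definable from the pure order, so the relative computable categoricity of the colored quotient $\Phi(L)$ (which follows from Theorem \ref{thmAC}) lifts to relative $\Delta_3^0$ categoricity of $L$.

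First I would verify that in each case, the coloring on blocks is uniformly $\Delta_3^0$-definable. For case (1), Proposition \ref{block1}(5) already tells us that $|[x]|=k$ is $D_2^0$ uniformly in $k$. For case (2), within the given class, ``$x$ lies in an $\omega$-block'' reduces to ``$[x]$ has a minimum and no maximum,'' which is the conjunction of a $\Sigma_2^0$ and a $\Pi_2^0$ formula and therefore $\Delta_3^0$; symmetrically for $\omega^*$-blocks, while the finite block-size predicates remain $\Delta_3^0$ by Proposition \ref{block1}. For case (3), ``$x$ lies in a $\zeta$-block'' becomes ``$[x]$ has neither minimum nor maximum,'' which is $\Pi_2^0\subseteq\Delta_3^0$. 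Boundedness of the finite blocks in (2) and (3) is what keeps each of these definitions at the $\Delta_3^0$ level.

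Next I would invoke Proposition \ref{prop:coloriffSp} to pass to the colored ordering $\Phi(L)$. Since $L$ is sp-homogeneous, $\Phi(L)$ is a homogeneous colored linear ordering (as established immediately after that proposition), and its purely relational signature makes it locally finite, so by Theorem \ref{thmAC}(2) it is relatively computably categorical. Given two computable copies $\A,\B$ of $L$, the previous step gives presentations of $\Phi(\A)$ and $\Phi(\B)$ computable from $(\A\oplus\B)''$, using block representatives such as the $\A$-least (respectively $\B$-least) element in each block. Relative computable categoricity then produces an isomorphism $g:\Phi(\A)\to\Phi(\B)$ computable from $(\A\oplus\B)''$.

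Finally I would extend $g$ to an isomorphism $f:\A\to\B$ by matching elements inside each pair of corresponding blocks: finite and $\omega$-blocks by position from the block minimum, $\omega^*$-blocks by position from the block maximum, and $\zeta$-blocks by pairing the $\A$-least element of a $\zeta$-block with the $\B$-least element of its image under $g$ and extending by successor distance. All these choices are computable from $(\A\oplus\B)''$ once block boundaries and endpoints are identified, and the entire procedure is uniform in the atomic diagrams of $\A$ and $\B$. The main subtlety I anticipate is case (1), where the colored signature is infinite: one needs that Theorem \ref{thmAC}(2) holds for locally finite homogeneous structures in countable relational signatures, which still works because each finite back-and-forth step inspects only finitely many of the coloring predicates.
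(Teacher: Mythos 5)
Your route through the colored quotient $\Phi(L)$ is genuinely different from the paper's, which instead builds explicit $\Sigma_3$ Scott families in the linear-order language directly: the paper observes that the predicates $P_n,S_n,Adj_n$ are $\Sigma_2$-definable, and then shows that in each of the three cases the finite quantifier-free $C_{\infty,\infty}$-diagrams isolate orbits, whence a parameterless computable $\Sigma_3$ Scott family falls out immediately. The advantage of the paper's approach is that it produces the Scott family explicitly, which is reused repeatedly in later arguments (e.g., in Proposition~\ref{prop:Sigma3blockabove} and in the proof of Theorem~\ref{spud3cat}); your approach is more conceptual and reuses the colorings-to-sp bijection, but it is less convenient as a springboard for the subsequent local analysis.

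There is, however, a real gap around uniformity. The proposition asserts \emph{uniform} relative $\Delta_3$ categoricity, which in the paper's usage means a parameterless computable $\Sigma_3$ Scott family (equivalently, a single procedure that works for every pair of copies). Theorem~\ref{thmAC}(2), which you invoke, only yields relative computable categoricity of $\Phi(L)$; since it is stated for \emph{weakly} homogeneous structures, the Scott family it produces is in general allowed a finite tuple of parameters, and a parameter-dependent isomorphism cannot be lifted to a uniform $\Delta_3$ procedure on $L$. To close the gap you must use the full (non-weak) homogeneity of $\Phi(L)$: in a relational language in which every element carries exactly one color, the quantifier-free type of a finite tuple is captured by a single finite formula (one atomic color formula per coordinate plus the order diagram), so the orbit-isolating formulas form a parameterless $\Sigma_1$ Scott family and $\Phi(L)$ is in fact \emph{uniformly} relatively computably categorical. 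Once this is said explicitly, the rest of your plan (computing $\Phi(\A),\Phi(\B)$ from $(\A\oplus\B)''$, extracting $g$ uniformly, and lifting $g$ block-by-block to $f$) goes through, but as written the appeal to Theorem~\ref{thmAC}(2) alone does not deliver the uniformity you need.
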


\begin{proof}
Let $\L$ be an $sp$-homogeneous.
By Theorem \ref{thm:homogChar}, the orbit of a tuple $\bar{x}\in\L$ is determined by
\begin{enumerate}
    \item The order of $\bar{x}$,
    \item The types of blocks that $\bar{x}$ lie in, and
    \item The position of $\bar{x}$ within the blocks it intersects.
\end{enumerate}
Therefore, to describe the automorphism orbit in the ordering $\L$, it is enough to isolate the automorphism orbit within each block and identify the isomorphism type of the block that the tuple lies in.

In the case that $\L$ has only finite blocks, the automorphism orbit of an element $x$ with exactly $n$ successors and $m$ predecessors can be isolated by $P_m(x)\land S_n(x)\land \lnot P_{m+1}(x)\land \lnot S_{n+1}(x)$. 
As every element has a finite number of successors and predecessors, these formulas are as desired for every element.
To distinguish elements $a,b$ that lie in the same block from those that lie in different blocks, it is enough to say either $Adj_\ell(a,b)$ or $\bigwwedge_{\ell\in\omega}\lnot Adj_\ell(a,b)$ where $\ell$ is the number of elements between any element in the orbit of $a$ and any element in the orbit of $b$ within their block.

In the second case, say that the maximal finite block has size $N$.
Within finite blocks, the same formulas as above define automorphism orbits.
Within the block of type $\omega$, an element $x$ must say how many predecessors it has to isolate its automorphism orbit.
In particular, if it has exactly $k$ predecessors $P_k(x)\land S_N(x)\land \lnot P_{k+1}(x)$ isolates the automorphism orbit of $k$.
A dual argument in $\omega^*$ shows that if $x$ has exactly $k$ successors then $S_k(x)\land P_N(x)\land \lnot S_{k+1}(x)$ isolates the automorphism orbit of $k$.
Distinguishing pairs of elements in the same blocks from those in different blocks is given by the same procedure as the one in the above paragraph.

In the final case, say that the maximal finite block has size $N$.
Again, within finite blocks, the same formulas as above demonstrate the desired construction.
Within the blocks of type $\zeta$, a tuple must simply state its (necessarily finite) $Adj_n$ type along with $S_N(x)$ to enforce that it is not in any finite block.
Distinguishing pairs of elements in the same blocks from those in different blocks is given by the same procedure as the one in the above paragraph.

Taken together, we obtain $\Sigma_3$ Scott families in each of the described cases, yielding the desired result. 
\end{proof}

Note that we have begun by looking at uniform relative categoricity.
It will follow from an argument later in the section, based on Theorem VIII.4 of \cite{cst1}, that we can leverage this understanding to understand relative categoricity without the uniformity assumption.

To describe the distinction between the $\Delta_3$ and $\Delta_4$ cases, we first need to observe the following consequence of the classification given in Theorem \ref{thm:homogChar}.

\begin{proposition}\label{prop:CasesDef}
    If $\L$ is $sp-$homogeneous, one of the following holds:
    \begin{enumerate}
        \item $\L\cong\L'+Sh(S)$ for $S\subseteq\omega\cup\{\omega,\omega^*,\zeta\}$;
        \item $\L\cong\L'+B$ where $B\in \omega\cup\{\omega,\omega^*,\zeta\}$ and $B$ is a block in $\L$;
        \item $\L$ has finite blocks of arbitrarily long size in every end segment.
    \end{enumerate}
\end{proposition}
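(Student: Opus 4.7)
The plan is to invoke the structural characterization from Theorem~\ref{thm:homogChar} and perform a case analysis on the right end of $\L$. That theorem decomposes $\L$ as an ordered union of \emph{pieces}: open intervals $I_v \cong Sh(A_v)$ for $v \in V$ together with single blocks of size $a$ for $a \in A$, where the family $\{A\}\cup\{A_v : v \in V\}$ is pairwise disjoint. Let $\mathcal{P}$ denote the linear ordering on this collection of pieces inherited from $\L$. My case split will be on whether $\mathcal{P}$ has a maximum element.

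If $\mathcal{P}$ has a maximum piece, then that piece is a right end segment of $\L$. If it is some shuffle $I_v$, then $\L \cong \L' + Sh(A_v)$ where $\L'$ is formed by the remaining pieces, yielding case~(1); if it is a single block $B$, then $\L \cong \L' + B$, yielding case~(2). In either case I allow $\L'$ to be empty. Suppose instead that $\mathcal{P}$ has no maximum, and fix an arbitrary end segment $E = \{x \in \L : x \geq y\}$ together with the piece $P$ containing $y$. Since $\mathcal{P}$ has no top, infinitely many pieces lie strictly above $P$, and each such piece is entirely contained in $E$. Each single-block piece above $P$ contributes exactly one block type from $A$, and each shuffle piece $I_v$ above $P$ contributes the nonempty set $A_v$ of block types; by Theorem~\ref{thm:homogChar} these contributions are pairwise disjoint. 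Hence $E$ realizes infinitely many distinct block types. Since $\omega \cup \{\omega, \omega^*, \zeta\}$ offers only three infinite types, cofinitely many of those realized types must be finite natural numbers, and being distinct they are unbounded. This establishes case~(3).

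The main subtlety will be in the no-maximum case: I need to be careful that the block types I am counting really are realized by blocks \emph{contained} in $E$, which boils down to the observation that any piece strictly above the piece of $y$ is fully contained in $E$, so every block of such a piece is a block of $\L$ sitting inside $E$. The disjointness clause of the structure theorem then converts infinitely many pieces into infinitely many distinct block types, and a pigeonhole on the three infinite types delivers unboundedly large finite blocks. Once this bookkeeping is done, the remaining case analysis is essentially a direct reading of the structural decomposition, so I expect the argument to be short.
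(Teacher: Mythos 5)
Your argument is correct and takes essentially the same approach as the paper: both rely on the decomposition from Theorem~\ref{thm:homogChar} together with a pigeonhole on the three infinite block types to produce unboundedly large finite blocks in every end segment. The only cosmetic difference is that the paper argues contrapositively (assume not Case~1 or~2, derive Case~3), whereas you do a direct case split on whether the ordering of pieces has a maximum; these are logically interchangeable.
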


\begin{proof}
    Say that $\L$ is not in Case 1 or Case 2.
    Consider a block $B$ in $\L$.
    By Theorem \ref{thm:homogChar}, $B$ is either the unique block of its order type or lies in a shuffle sum including its order type.
    By our assumption, these are not the last elements of the ordering.
    Therefore, there is a block $B'$ greater than every block with order type $B$.
    In sum, every block in $\L$ has a block of a different order type greater than it.
    As there are only finitely many infinite block types, this means that every block in $\L$ has infinitely many finite block types greater than it.
    The fact that $\L$ is in Case 3 follows at once.
\end{proof}

Throughout this section, we say that $\L$ is ``in Case 1'', ``in Case 2'', or ``in Case 3'' if $\L$ falls in the corresponding case of the above theorem.
We can also apply the above analysis to $\L^*$ to understand the initial behavior of $\L$.

\begin{corollary}
        If $\L$ is $sp$-homogeneous, then one of the following holds:
    \begin{enumerate}
        \item $\L\cong Sh(S)+\L'$ for $S\subseteq\omega\cup\{\omega,\omega^*,\zeta\}$;
        \item $\L\cong B+\L'$, where $B\in \omega\cup\{\omega,\omega^*,\zeta\}$ and $B$ is a block in $\L$;
        \item $\L$ has finite blocks of arbitrarily long size in every initial segment.
    \end{enumerate}
\end{corollary}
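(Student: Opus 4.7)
The plan is to obtain this corollary by a straightforward application of Proposition \ref{prop:CasesDef} to the reverse linear ordering $\L^*$. The first step is to verify that $sp$-homogeneity is preserved under reversal: if $(\L,<,s,p)$ is $sp$-homogeneous, then so is $(\L,>,p,s)$, since reversal swaps the roles of $s$ and $p$ while preserving the automorphism group. Hence $\L^*$ is itself $sp$-homogeneous, and Proposition \ref{prop:CasesDef} yields that $\L^*$ falls into one of the three cases listed there.

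Next I would translate each of the three cases for $\L^*$ back to the corresponding case for $\L$ by reversing once more. The translation relies on two small observations. First, the collection $\omega\cup\{\omega,\omega^*,\zeta\}$ of allowable block types is closed under reversal: finite blocks reverse to themselves, $\omega$ and $\omega^*$ are interchanged, and $\zeta^*\cong\zeta$. Second, shuffle sums commute with reversal in the sense that $Sh(S)^*\cong Sh(S^*)$, where $S^*=\{B^*:B\in S\}$; this follows from the uniqueness-up-to-isomorphism characterization of shuffle sums, since the reverse of a densely distributed family of block types is again densely distributed.

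With these two facts in hand, Case 1 for $\L^*$ (which says $\L^*\cong \L''+Sh(S)$) becomes $\L\cong Sh(S^*)+(\L'')^*$ with $S^*$ still a subset of $\omega\cup\{\omega,\omega^*,\zeta\}$, yielding Case 1 for $\L$. Case 2 for $\L^*$ gives $\L\cong B^*+(\L'')^*$, and since $B^*\in\omega\cup\{\omega,\omega^*,\zeta\}$ whenever $B$ is, and reversal sends blocks to blocks, this is exactly Case 2 for $\L$. Case 3 for $\L^*$ asserts the existence of arbitrarily large finite blocks in every end segment of $\L^*$, which is precisely the statement that $\L$ has arbitrarily large finite blocks in every initial segment.

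There is no substantive obstacle here; the only thing to be careful about is verifying the two commutation properties (closure of the allowable block types under reversal and the reversal identity for shuffle sums), both of which are essentially immediate. I would therefore state the corollary, note these observations briefly, and leave the routine case-by-case dualization to the reader, referencing Proposition \ref{prop:CasesDef} applied to $\L^*$.
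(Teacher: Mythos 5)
Your proof is correct and takes the same route the paper intends: the paper simply says ``We can also apply the above analysis to $\L^*$'' and then states the corollary, leaving the dualization implicit. You have supplied the routine verifications (preservation of $sp$-homogeneity under reversal, closure of the block-type set under reversal, and $Sh(S)^*\cong Sh(S^*)$) that the paper omits, and all three check out.
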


We say that $\L$ is in ``Case 1*'', ``in Case 2*'', or ``in Case 3*'' if $\L^*$ falls in the corresponding case of the above corollary.

Proposition \ref{prop:Delta3begin} indicates that the only potential barriers to being $\Delta_3$ relatively categorical are the infinite blocks.
Indeed, Proposition \ref{prop:Delta3begin} gives a uniformly computable $\Sigma_3$ Scott family that defines the automorphism orbits of the elements in the finite blocks, regardless of the underlying $sp$-homogeneous linear ordering, so their positioning is never of serious concern.
A key idea in the proof is to analyze the infinite blocks in the linear ordering locally.
We look at each of the infinite blocks in the ordering (or the shuffle sum they happen to be a part of) along with the linear orderings to their right and left (analyzing which case it is in the above definition).
This yields a series of cases for local behavior, some of which will be allowable in a $\Delta_3$ relatively categorical and some of which will forbid the ordering from being $\Delta_3$ relatively categorical.
By exhausting these cases, we will arrive at the desired classification.

We make extensive use of the following tool and proposition for our negative results.
These definitions make use of the previously introduced back-and-forth relations.

\begin{definition}[{\cite[Section 17.4]{AK00}}]
We say that a tuple is $\alpha$-\textit{free} in the structure $\A$ if for every tuple $\bar{b}\in A$ and every $\beta<\alpha$, there are tuples $\bar{a}',\bar{b}'$ such that 
\[\bar{a}\bar{b}\leq_\beta\bar{a}'\bar{b}'\]
\[\bar{a}\not\leq_\alpha\bar{a}'.\]
\end{definition}

\begin{proposition}[{\cite[Section 17.4]{AK00}}]
    A tuple is $\alpha$-free if and only if its automorphism orbit cannot be defined by a $\Sigma_\alpha$ formula.
    In particular, if $\A$ contains an $\alpha$-free tuple, it is not uniformly relatively $\Delta_\alpha$ categorical.
\end{proposition}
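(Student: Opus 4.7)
The plan is to prove both directions of the biconditional via back-and-forth arguments and then deduce the categoricity clause from the standard characterization of uniform relative $\Delta_\alpha$-categoricity by formally $\Sigma_\alpha$ Scott families.

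For the forward direction I would argue by contradiction: suppose $\bar{a}$ is $\alpha$-free yet its orbit is defined by a $\Sigma_\alpha$ formula $\varphi(\bar{x})$. Since $\bar{a} \models \varphi$, some disjunct of $\varphi$ of the form $\exists \bar{y}\,\psi(\bar{x}, \bar{y})$ with $\psi \in \Pi_\beta$ for some $\beta < \alpha$ is witnessed by a tuple $\bar{b}$ in $\A$. Apply $\alpha$-freeness at this $\bar{b}$ and $\beta$ to obtain $\bar{a}',\bar{b}'$ with $\bar{a}\bar{b} \leq_\beta \bar{a}'\bar{b}'$ and $\bar{a} \not\leq_\alpha \bar{a}'$. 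The $\leq_\beta$-comparability transfers the $\Pi_\beta$-formula $\psi$ to $\bar{a}'\bar{b}'$, so $\bar{a}' \models \varphi$ and $\bar{a}'$ lies in the orbit of $\bar{a}$; but then $\bar{a} \equiv_\alpha \bar{a}'$, contradicting $\bar{a} \not\leq_\alpha \bar{a}'$.

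For the reverse direction, the failure of $\alpha$-freeness provides $\bar{b}$ and $\beta < \alpha$ such that every extension matching $\bar{a}\bar{b}$ at level $\leq_\beta$ forces $\bar{a} \leq_\alpha \bar{a}'$. I would take the $\Pi_\beta$-type $p(\bar{x}, \bar{y})$ of $\bar{a}\bar{b}$ and form the $\Sigma_\alpha$ formula $\varphi(\bar{x}) := \exists \bar{y}\,\bigwedge p(\bar{x}, \bar{y})$. Containment of the orbit in the definable set is immediate; conversely, any realization $\bar{a}'$ of $\varphi$ comes with a witness $\bar{b}'$ giving $\bar{a}\bar{b} \leq_\beta \bar{a}'\bar{b}'$, hence $\bar{a} \leq_\alpha \bar{a}'$. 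Exploiting the symmetry of the $\Pi_\beta$-type match by reapplying the same principle with the roles of $\bar{a}$ and $\bar{a}'$ swapped yields $\bar{a}' \leq_\alpha \bar{a}$ as well, and Scott's isomorphism theorem for countable structures then promotes this $\equiv_\alpha$-equivalence (bootstrapped by iterating up the Scott analysis) to an actual automorphism.

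Finally, the categoricity clause rests on the standard equivalence that $\A$ is uniformly relatively $\Delta_\alpha$-categorical iff it carries a formally $\Sigma_\alpha$ Scott family over a finite parameter tuple $\bar{b}$. If $\bar{a}$ is $\alpha$-free, substituting any such $\bar{b}$ into the freeness definition exhibits tuples that share the $\Sigma_\alpha$-type of $\bar{a}\bar{b}$ yet lie in a distinct orbit, which rules out any such Scott family. I expect the main technical obstacle to be the reverse direction of the biconditional: the non-freeness hypothesis is one-sided, so extracting the symmetric inequality $\bar{a}' \leq_\alpha \bar{a}$ requires carefully leveraging the symmetric structure of the $\Pi_\beta$-type match and correctly iterating the hypothesis before Scott's theorem can be invoked to deliver the automorphism.
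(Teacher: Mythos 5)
This result is cited to Ash--Knight \cite{AK00} rather than proved in the paper, so there is no in-text argument to compare against; I am assessing your proposal on its own. Your forward direction is correct: extracting a disjunct $\exists\bar{y}\,\psi$ with $\psi\in\Pi_\beta$ ($\beta<\alpha$) from a putative $\Sigma_\alpha$ definition $\varphi$ of the orbit, applying freeness at the witness $\bar{b}$ and at level $\beta$, and using that $\bar{a}\bar{b}\leq_\beta\bar{a}'\bar{b}'$ transports every $\Pi_\beta$ truth of $\bar{a}\bar{b}$ to $\bar{a}'\bar{b}'$, yields $\bar{a}'\models\varphi$ while $\bar{a}\not\leq_\alpha\bar{a}'$, contradicting membership of $\bar{a}'$ in the orbit.

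The reverse direction has a genuine gap. Forming $\varphi(\bar{x}):=\exists\bar{y}\,\bigwedge p(\bar{x},\bar{y})$ from the $\Pi_\beta$-type $p$ of $\bar{a}\bar{b}$ is indeed $\Sigma_\alpha$ (since $\beta+1\le\alpha$), and any realization $\bar{a}'$ with witness $\bar{b}'$ does give $\bar{a}\bar{b}\leq_\beta\bar{a}'\bar{b}'$ and hence, by non-freeness, $\bar{a}\leq_\alpha\bar{a}'$. But that is as far as the hypothesis takes you, and the two steps you propose to upgrade $\bar{a}\leq_\alpha\bar{a}'$ to ``$\bar{a}'$ lies in the orbit of $\bar{a}$'' do not hold up. First, swapping the roles of $\bar{a}$ and $\bar{a}'$ is illegitimate: non-freeness is an existential statement about $\bar{a}$ specifically (there exist $\bar{b},\beta$ witnessing the implication for $\bar{a}$), and nothing grants the analogous statement with $\bar{a}',\bar{b}'$ in place of $\bar{a},\bar{b}$. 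Moreover, a realization of $p$ gives only the one-sided comparison $\bar{a}\bar{b}\leq_\beta\bar{a}'\bar{b}'$, not an equality of $\Pi_\beta$-types ($\bar{a}'\bar{b}'$ may satisfy additional $\Pi_\beta$ formulas), so the advertised ``symmetry of the $\Pi_\beta$-type match'' is not present. Second, even granting $\bar{a}\equiv_\alpha\bar{a}'$, Scott's theorem promotes $\equiv_{\infty\omega}$-equivalence (i.e., $\equiv_\gamma$ for every countable $\gamma$) to automorphy, not $\equiv_\alpha$ for a single $\alpha$; ``bootstrapped by iterating up the Scott analysis'' names the missing content without supplying it. This passage from non-freeness to a $\Sigma_\alpha$ definition of the orbit is precisely where the substantive work in Ash--Knight lives, and it is not closed by formally massaging the definition.

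A smaller point: the categoricity clause's appeal to a ``formally $\Sigma_\alpha$ Scott family over a finite parameter tuple $\bar{b}$'' mislabels the equivalence. Uniform relative $\Delta_\alpha$-categoricity corresponds to a \emph{parameter-free} $\Sigma_\alpha$ Scott family; the parametrized version characterizes (non-uniform) relative $\Delta_\alpha$-categoricity, exactly the distinction the paper relies on later via its citation of \cite{cst1}. With that corrected, the intended argument (any such Scott family would have to contain a $\Sigma_\alpha$ formula isolating the orbit of the free tuple) is sound once the biconditional is in hand.
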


We also make consistent use of the classification of the 2-back-and-forth relations for linear orderings explicitly laid out in Theorem \ref{thm:2bnf}.
This allows us to deliver our proofs in a far more parsimonious manner.

\begin{convention}
    For the sake of cleaner notation, in the following proposition and throughout the section, we write $Sh(I_0,S_0)$ to mean $Sh(\{I_0\}\cup S_0)$ and $Sh(I_1,I_2,S_1)$ to mean $Sh(\{I_1,I_2\}\cup S_1)$ where the $I_k$ are infinite block types and the $S_j$ are sets of finite block types.
\end{convention}

We begin by noting that some shuffle sums with infinite blocks are never allowed in a $\Delta_3$ relatively categorical ordering.

\begin{proposition}\label{prop:forbiddenShuffles}
    Let $S$ be an arbitrary subset of $\omega\cup\{\omega,\omega^*,\zeta\}$ and $T$ be an infinite subset of $\omega$. Let $\L$ be one of the following orderings:
    \begin{enumerate}[label=(\alph*)]
        \item $Sh(\omega,\zeta,S)$,
        \item $Sh(\omega^*,\zeta,S)$,
        \item $Sh(\omega,T)$,
        \item $Sh(\omega^*,T)$,
        \item $Sh(\zeta,T)$,
        \item $Sh(\omega,\omega^*,T)$.
    \end{enumerate}
    Rephrased, (a) and (b) are shuffle sums containing $\zeta$ and a different infinite block, while (c)-(f) are shuffle sums containing an infinite block alongside finite blocks of arbitrary size.
    Given any $\A$ and $\B$, $\A+\L+\B$ is not uniformly $\Delta_3$ relatively categorical.
\end{proposition}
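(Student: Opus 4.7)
The plan is to exhibit, in each of the six cases, a tuple in $\A + \L + \B$ that is $3$-free; by the Ash–Knight framework recalled in Section~\ref{sec:Boldface}, this precludes uniform relative $\Delta_3$ categoricity. The unifying idea is that a shuffle sum which mixes an infinite block type with arbitrarily large finite blocks (cases (c)–(f)), or with a second infinite block type (cases (a), (b)), admits an automorphism orbit whose definability requires a genuinely $\Pi_3$ statement of the form ``infinitely many successors/predecessors'' that cannot be flattened to $\Sigma_3$ in the pure order language once these arbitrarily large approximations coexist inside $\L$.

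I will work out case (c), $\L = Sh(\omega,T)$ with $T$ infinite, as the template. Let $a$ be the first element of some $\omega$-block of $\L$; I claim $(a)$ is $3$-free in $\A + \L + \B$. Given any parameter tuple $\bar{b}$, choose $n\in T$ strictly larger than the successor-depth that $\bar{b}$ probes near $a$'s block, and let $a'$ be the first element of some block of $\L$ of size $n$. The partner $\bar{b}'$ is obtained by applying an automorphism of $\A + \L + \B$ that fixes $\A\cup \B$ and permutes blocks of $\L$ of matching type so as to move $\bar{b}$'s local configuration around $a$ onto the local configuration around $a'$; such an automorphism exists because by Theorem~\ref{thm:homogChar} the complement of finitely many blocks in $\L$ is again a copy of $Sh(\omega,T)$.

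To verify $(a,\bar{b})\leq_\beta (a',\bar{b}')$ for $\beta<3$, the main check is at $\beta=2$, where I invoke Theorem~\ref{thm:2bnf}. The intervals of $\A + \L + \B$ defined by $(a,\bar{b})$ and $(a',\bar{b}')$ that lie away from the distinguished block are in pairwise isomorphic copies of $Sh(\omega,T)$ (or of pieces of $\A$ or $\B$) and are thus $\equiv_2$. The interval carrying $a$ has type $\omega+\text{tail}$ while that carrying $a'$ has type $n+\text{tail}$; since $n$ exceeds any successor-depth probed from $\bar{b}$, the initial block structures match as far as a level-$2$ witness can test, and Theorem~\ref{thm:2bnf} certifies the $\leq_2$ relation. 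For $a\not\leq_3 a'$, the Spoiler wins at level $3$ by playing (on the $a'$-side) the entire block of $a'$ together with an element in the block immediately above; the Duplicator on the $a$-side cannot close off $a$'s $\omega$-block in finitely many moves, and the resulting level-$2$ sub-game is lost because the $\Sigma_2$ statement ``has an $(n{+}1)$-st further successor'' holds on the $a$-side but not on the $a'$-side.

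Cases (d), (e), and (f) run in parallel with (c), taking $a$ in the listed infinite block type ($\omega^*$, $\zeta$, or one of $\omega,\omega^*$) and $a'$ in a large finite block of size $n\in T$; the roles of ``successor'' and ``predecessor'' are swapped when the block type is $\omega^*$, and the $\zeta$-case is handled by arranging matching tails on both sides of the distinguished element. Cases (a) and (b) require a different confusing partner since arbitrarily large finite blocks are not guaranteed: I instead take $a$ inside a $\zeta$-block and $a'$ inside an $\omega$-block (case (a)) or $\omega^*$-block (case (b)) at a position so deep that the predecessor (or successor) information in $\bar{b}$ is dwarfed, exploiting that a deep enough initial segment of the respective infinite block is $\leq_2$-indistinguishable from the $\zeta$ configuration. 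The main obstacle, common to all six cases, is the bookkeeping at $\beta=2$: one must show that after each Spoiler move one can produce a matching Duplicator move, which reduces via Theorem~\ref{thm:2bnf} to a finite case analysis on whether Spoiler's new element lies inside the distinguished block, elsewhere in $\L$, or in $\A\cup \B$, each sub-case being resolved by the richness of the shuffle-sum structure together with the largeness of $n$ (or of the chosen deep position).
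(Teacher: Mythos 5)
Your overall plan — exhibit a $3$-free tuple and verify freeness via the $\leq_2$-calculus of Theorem~\ref{thm:2bnf} — is exactly the paper's strategy, and your choice of confusing partner (infinite block versus arbitrarily large finite block for (c)--(f), $\zeta$-block versus $\omega$/$\omega^*$-block for (a)--(b)) also matches. However, there is a genuine gap in how you produce the tuple $\bar{b}'$.

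You claim to obtain $\bar{b}'$ by ``applying an automorphism of $\A+\L+\B$ that fixes $\A\cup\B$ and permutes blocks of $\L$ of matching type so as to move $\bar{b}$'s local configuration around $a$ onto the local configuration around $a'$.'' No such automorphism exists: every automorphism sends the $\omega$-block containing $a$ to another $\omega$-block, never to the finite block containing $a'$, so if $\bar{b}$ has elements in $a$'s block (the hard and typical case), the images of those elements cannot land next to $a'$. The paper sidesteps this by not using an automorphism at all: it splits $\bar{b}$ as $\bar{b}_1\bar{b}_2\bar{b}_3$ (below / inside / above $a$'s block), keeps $\bar{b}_1$ and $\bar{b}_3$ literally fixed, and replaces only $\bar{b}_2$ with the first $|\bar{b}_2|$ elements of the finite block $F$ containing $a'$, chosen between $\bar{b}_1$ and $\bar{b}_3$. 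This is the step your writeup is implicitly assuming but not actually justifying; as stated, the construction of $\bar{b}'$ does not go through. A second, smaller issue is that your verification of $\leq_2$ is heuristic: ``the initial block structures match as far as a level-$2$ witness can test'' is not a proof. The paper reduces the check to two concrete $\geq_2$ facts about shuffle sums with short block prefixes/suffixes attached (e.g.\ $Sh(\omega,T)\geq_2 Sh(\omega,T)+k$ and $k+Sh(\omega,T)\geq_2 \omega+Sh(\omega,T)$), which follow directly from Theorem~\ref{thm:2bnf}; you should state the interval types explicitly and cite those inequalities. Finally, for $\bar{a}\not\leq_3\bar{a}'$ you describe a multi-round Spoiler strategy, but the observation that $a$ and $a'$ differ on the $\Pi_1$ formula ``has at most $n-1$ consecutive successors'' already gives $\bar{a}\not\leq_2\bar{a}'$, which is all that's needed and is what the paper uses. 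Your choice for (a)--(b) of placing $a'$ deep in the $\omega$-block also works but is unnecessary; the paper places it shallowly, matching the position of $a$ within $\bar{b}_2$, and the interval comparisons close the same way.
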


\begin{proof}
    We will show that in each case there is a $3$-free element in $\L$.
    In each case, the proof of $3$-freeness is unaffected by the presence of $\A$ and $\B$ in the ordering, so this will yield the desired result.

    We now analyze Case (a); by symmetry, Case (b) will be the same argument.
    Fix an element $a$ lying in $Z$, a block isomorphic to $\zeta$.
    We claim that $a$ is $3$-free.
    Let $\bar{b}$ be the tuple selected by the Spoiler.
    Write $\bar{b}=\bar{b}_1\bar{b}_2\bar{b}_3$, where $\bar{b}_1$ contains elements below $Z$, $\bar{b}_2$ contains elements in $Z$, and $\bar{b}_3$ contains elements above $Z$.
    Without loss of generality, $\bar{b}_2$ is a finite interval of size $m$ including $a$.
    Say that $a$ is the $k^{th}$ element of $\bar{b}_2$.
    Let $a'$ be the $k^{th}$ element of an $\omega$ block, $W$ greater than all of the elements in $\bar{b}_1$ and less than all of the elements in  $\bar{b}_3$.
    Let $\bar{b}'=\bar{b}_1\bar{b}'_2\bar{b}_3$ where $\bar{b}'_2$ is the first $m$ elements of $W$.
    It is immediate that $\bar{a}\not\geq_3\bar{a}'$ as the two elements have a different number of predecessors. 
    We claim that $\bar{a}\bar{b}\leq_2\bar{a}'\bar{b}'$.
    Checking interval by interval, the only non-isomorphic corresponding intervals are the ones below $\bar{b}_2$ and $\bar{b}'_2$.
    Thus, we need only check that
    \[Sh(\omega,\zeta,S)\geq_2 Sh(\omega,\zeta,S)+\omega^*,\]
    which follows from Theorem \ref{thm:2bnf}.

    We now analyze Case (c).
    In fact, all other cases will follow from the same argument as the one for Case (c), so this is the only remaining case that is explicitly considered.
    Let $a$ be an element of an $\omega$ block $W$.
     We claim that $a$ is $3$-free.
    Let $\bar{b}$ be the tuple selected by the Spoiler.
    Write $\bar{b}=\bar{b}_1\bar{b}_2\bar{b}_3$ where $\bar{b}_1$ contains elements below $W$, $\bar{b}_2$ contains elements in $W$ and $\bar{b}_3$ contains elements above $W$.
    Without loss of generality, $\bar{b}_2$ is a finite interval of size $m$ including $a$.
    Say that $a$ is the $k^{th}$ element of $\bar{b}_2$.
    Let $a'$ be the $k^{th}$ element of a finite block of size greater than $m$, that we call $F$, greater than all of the elements in $\bar{b}_1$ and less than all of the elements in  $\bar{b}_3$.
    Let $\bar{b}'=\bar{b}_1\bar{b}'_2\bar{b}_3$ where $\bar{b}'_2$ is the first $m$ elements of $F$.
    It is immediate that $\bar{a}\not\geq_3\bar{a}'$ as the two elements have a different number of successors. 
    We claim that $\bar{a}\bar{b}\leq_2\bar{a}'\bar{b}'$.
    Checking interval by interval, the only non-isomorphic corresponding intervals are the ones below and above $\bar{b}_2$ and $\bar{b}'_2$.
    Thus, we need only check that
    \[Sh(\omega,T)\geq_2 Sh(\omega,T)+k_1,\]
    \[k_2+Sh(\omega,\zeta,S)\geq_2 \omega+Sh(\omega,\zeta,S),\]
    for some finite orderings $k_i$.
    Both of these follow from Theorem \ref{thm:2bnf}.
\end{proof}

Note that, when restricting our attention to shuffle sums, Proposition \ref{prop:forbiddenShuffles} provides a full converse to Proposition \ref{prop:Delta3begin}.
More explicitly, Proposition \ref{prop:Delta3begin} implies that shuffle sums containing only finite blocks, along with some selection of infinite blocks not including both $\zeta$ and another infinite block or just infinite blocks, always have uniformly computable $\Sigma_3$ definitions of their automorphism orbits.
On the other hand, shuffle sums not in this form (exactly those considered by Proposition \ref{prop:forbiddenShuffles}) fail to have such $\Sigma_3$ definitions no matter what, even if they are placed in the context of a larger linear ordering.

We now move to analyze the shuffle sums from Proposition \ref{prop:Delta3begin} in the context of a larger ordering.
Each of these is sometimes allowable to appear in a uniformly relatively $\Delta_3$ categorical linear ordering, but sometimes they are not, depending on the local context of the ordering.
We begin with the case where the infinite block(s) are shuffled in with some \textit{non-empty} finite set of finite blocks.
This case is in some ways the ``nicest''; it does not come with any barriers to uniform relative $\Delta_3$ categoricity.
This will follow from the following proposition.

\begin{proposition}\label{prop:Sigma3blockabove}
    There is a uniformly computable set of $\Sigma_3$ formulas $\varphi_{n,<}$ and $\varphi_{n,>}$ which hold of exactly the points that are (respectively) below some block of size $n$ and above some block of size $n$.
\end{proposition}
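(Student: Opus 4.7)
The plan is to directly build the formula $\varphi_{n,<}(x)$ by combining the block-size calculations from Proposition \ref{block1} with a single bounded existential quantifier. By that proposition, the predicate $|[y]| \geq n$ is uniformly $\Sigma_2^0$ in the language of linear orderings: one writes
\[
\exists y_1 < \cdots < y_n \Bigl( y \in \{y_1,\ldots,y_n\} \wedge \bigwedge_{i<n}\bigl( y_i < y_{i+1} \wedge \forall z\, \neg(y_i < z < y_{i+1}) \bigr) \Bigr),
\]
which has the form $\exists$ over a conjunction of $\Pi_1$ matrix. Taking the negation of the analogous formula for $n+1$ gives a uniformly $\Pi_2^0$ definition of $|[y]| \leq n$, and their conjunction uniformly defines $|[y]| = n$.

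Given this, I would simply set
\[
\varphi_{n,<}(x) \;\equiv\; \exists y\,\bigl( x < y \,\wedge\, |[y]| \geq n \,\wedge\, |[y]| \leq n \bigr),
\]
and define $\varphi_{n,>}(x)$ dually by replacing $x < y$ with $y < x$. Since the matrix is the conjunction of a $\Sigma_2$ and a $\Pi_2$ formula (with the $\Pi_2$ part dominating), prefixing a single existential quantifier produces a $\Sigma_3$ formula. The construction is explicit in $n$, so the resulting family $\{\varphi_{n,<},\varphi_{n,>}\}_{n\in\omega}$ is uniformly computable.

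Finally, by construction $\L \models \varphi_{n,<}(a)$ if and only if there is some $b > a$ with $|[b]| = n$, i.e., $a$ lies below some block of size $n$, and dually for $\varphi_{n,>}$. There is no real obstacle here beyond bookkeeping on the complexity of the matrix; the content of the result is essentially a repackaging of Proposition \ref{block1} with one extra bounded existential.
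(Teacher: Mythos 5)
Your proposal is correct and takes essentially the same approach as the paper: both construct an explicit $\Sigma_3$ formula asserting the existence of a block of size exactly $n$ on the appropriate side of the given point. The paper writes the formula out directly in terms of $Adj$ (asserting $n$ adjacent elements with no predecessor of the first and no successor of the last), while you route through Proposition \ref{block1}'s complexity bounds for $|[y]| \geq n$ and $|[y]| \leq n$; the complexity count ($\exists$ over a $\Sigma_2 \wedge \Pi_2$ matrix giving $\Sigma_3$) is correct either way, and the uniformity in $n$ is transparent in both.

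One small discrepancy worth noting: your formula $\exists y\,(x < y \wedge |[y]| = n)$ asserts only that some \emph{element} of an $n$-block lies above $x$, whereas the paper's version asserts that an \emph{entire} $n$-block lies strictly on the relevant side. These disagree when $x$ itself sits inside an $n$-block (e.g.\ in a single isolated $3$-block, your $\varphi_{3,<}$ holds of the first element). This edge case is harmless for the intended application — inside a shuffle sum there are always $n$-blocks strictly on both sides — but if you want the formula to match the literal statement "below some block of size $n$," you should additionally demand the witnessing block be disjoint from $[x]$, as the paper's explicit $Adj$-formulation implicitly does.
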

\begin{proof}
    We write out the definitions explicitly:
    \[\varphi_{n,<}(y):= \exists x_1\dots, x_n<y\ \bigwedge_{j<n} Adj(x_j,x_{j+1})\land \forall z \lnot Adj(z,x_1)\land \lnot Adj(x_n,z)\]\
    
    \[\varphi_{n,>}(y):=\exists x_1,\dots,x_n>y \bigwedge_{j<n} Adj(x_j,x_{j+1})\land \forall z \lnot Adj(z,x_1)\land \lnot Adj(x_n,z)).\]
    
    \end{proof}

\begin{corollary}
        Let $S$ be a finite non-empty subset of $\omega$.
    Let $\L$ be one of the following orderings:
    \begin{enumerate}
        \item $Sh(\omega,S)$,
        \item $Sh(\omega^*,S)$,
        \item $Sh(\zeta,S)$,
        \item $Sh(\omega,\omega^*,S)$.
    \end{enumerate}
    Assume that $\A+\L+\B$ is $sp$-homogeneous. The automorphism orbits of the tuples of $\L$ within $\A+\L+\B$ are always definable by a uniform set of uniformly computable $\Sigma_3$ formulas.
\end{corollary}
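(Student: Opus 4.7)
The plan is to leverage the $C_{\infty,\infty}$-homogeneity of $\A+\L+\B$ guaranteed by Theorem \ref{thm:CnmClassify}(4), and then construct a finite quantifier-free $C_{\infty,\infty}$-formula that isolates the automorphism orbit of each tuple from $\L$. Each predicate $S_i$, $P_j$, $Adj_k$ is $\Sigma_2^0$ in the pure linear-ordering language by Proposition \ref{block1}, so any quantifier-free $C_{\infty,\infty}$-formula translates into a $\Delta_3^0$ formula in $LO$; combined with the $\Sigma_3^0$ locating formulas of Proposition \ref{prop:Sigma3blockabove} and the closure of $\Sigma_3^0$ under finite conjunction, this will yield the required uniformly computable $\Sigma_3^0$ Scott family.

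Given a tuple $\bar a$ from $\L$, I would first group its elements into blocks $B_1<\dots<B_r$ and, within each block, describe each element's position. For a finite block of size $k\in S$ with an element at position $p$, the formula $P_p(x)\wedge\neg P_{p+1}(x)\wedge S_{k-p-1}(x)\wedge\neg S_{k-p}(x)$ pins down the position completely. For an infinite block, set $K=\max(S)+1$: an $\omega$-block element at position $p$ from the bottom is described by $P_p(x)\wedge\neg P_{p+1}(x)\wedge S_K(x)$; an $\omega^*$-block element at position $p$ from the top is described symmetrically; and a $\zeta$-block element is described by $S_K(x)\wedge P_K(x)$, together with the $Adj_k$ distances between any pair of tuple elements lying in the same $\zeta$-block. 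Conjoining these per-element descriptions with $<$-information (and $\neg Adj_j$ separations across distinct blocks, for $j$ up to the size-bound) produces the quantifier-free $C_{\infty,\infty}$-part of the formula.

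The decisive step is adjoining locating clauses: picking some fixed $n_0\in S$, I would require, for each element $x$ of the tuple, that both $\varphi_{n_0,<}(x)$ and $\varphi_{n_0,>}(x)$ hold. By Theorem \ref{thm:homogChar} applied to the sp-homogeneous ordering $\A+\L+\B$, the block-type $n_0$ must be either uniformly unique or densely shuffled throughout; since it occurs infinitely in $\L$ it cannot occur uniquely in $\A$ or $\B$, so every $n_0$-block in $\A+\L+\B$ sits inside the (possibly extended) shuffle interval containing $\L$. Consequently, an element has $n_0$-blocks both above and below it precisely when it lies in this shuffle interval, which is exactly where the within-block $C_{\infty,\infty}$-description above was designed to isolate orbits.

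The main obstacle will be verifying that the locating clauses truly restrict matters to the shuffle interval, ensuring that no stray element from a unique finite or infinite block in $\A$ or $\B$ (permitted by sp-homogeneity) satisfies the full conjunction. The key point is a short case analysis: a unique $\omega$-block (or $\omega^*$-, $\zeta$-, or large finite block) sitting outside the shuffle lies either strictly below or strictly above the shuffle interval in $\A+\L+\B$, so at least one of $\varphi_{n_0,<}$, $\varphi_{n_0,>}$ fails on its elements. Combined with Proposition \ref{prop:blockcriteria} — which says that under $C_{\infty,\infty}$-homogeneity two tuples with the same quantifier-free $C_{\infty,\infty}$-type are automorphic — this will show that the constructed formula holds of $\bar a$ and defines exactly its orbit in $\A+\L+\B$, completing the proof.
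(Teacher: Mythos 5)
Your proposal follows the same route as the paper: locate $\L$ with the $\varphi_{n_0,<}\wedge\varphi_{n_0,>}$ clauses (valid because, by the pairwise disjointness in Theorem \ref{thm:homogChar}, block sizes from $S$ occur nowhere in $\A$ or $\B$), then isolate the orbit inside $\L$ using $P_i/S_j/Adj_k$ predicates. The paper's own proof is just those two sentences, delegating the inside-$\L$ work to Proposition \ref{prop:Delta3begin}; you unpack that step explicitly, which is fine.

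There is, however, one spot where the unpacking exposes a gap that the paper's terse wording also skates past, namely the cross-block case for $Sh(\zeta,S)$. If $a_i<a_j$ lie in \emph{distinct} $\zeta$-blocks, their quantifier-free $C_{\infty,\infty}$-type contains $\neg Adj_k(a_i,a_j)$ for \emph{every} $k$, which is not a finite fragment. Your proposed clause ``$\neg Adj_j$ separations across distinct blocks, for $j$ up to the size-bound'' does not isolate the orbit: a pair $b_i<b_j$ inside the \emph{same} $\zeta$-block at distance larger than $\max(S)$ satisfies exactly the same finite conjunction of $\neg Adj_j$'s, yet lies in a different automorphism orbit (the interval $(b_i,b_j)$ is finite while $(a_i,a_j)$ is not). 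Proposition \ref{prop:blockcriteria} then gives you nothing: it only guarantees that tuples with the \emph{same full} q.f.\ $C_{\infty,\infty}$-type are automorphic, and your finite formula fails to pin that type down. Note that the bounding trick does work for every other cross-block pair, because if either endpoint sits in a finite, $\omega$-, or $\omega^*$-block you can read off a finite bound on $Adj$ from its $P_i$ or $S_j$ diagram; the obstruction is special to two $\zeta$-elements. The easy repair is to stop insisting the formula be quantifier-free over $C_{\infty,\infty}$: replace the $\neg Adj$-clause by the $\Sigma_1(C_{\infty,\infty})$ (hence still $\Sigma_3^0$ over $LO$) formula $\exists z\,\bigl(a_i<z<a_j\wedge\neg S_{\max(S)}(z)\bigr)$, asserting a finite-block element strictly between them, which holds iff they sit in distinct $\zeta$-blocks of the shuffle. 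With that one change, your argument goes through and agrees with the paper's intent.
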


\begin{proof}
    Because $\A+\L+\B$ is $sp$-homogeneous, the finite block sizes in $S$ do not appear anywhere in $\A$ or $\B$.
    Let $n\in S$.
    The elements of $\L$ are then definable by $\varphi_{n,<}(y)$ and $\varphi_{n,>}(y)$. 
    Within $\L$, the definitions from Proposition \ref{prop:Delta3begin} define the automorphism orbit; taking a conjunction of these definitions with $\varphi_{n,<}(y)$ and $\varphi_{n,>}(y)$ yields the desired result.
\end{proof}

\begin{corollary}\label{cor:defBoundedSummand}
    Let $\A$ and $\B$ be one of the following orderings:
    \begin{enumerate}
        \item $n\in\omega$,
        \item $Sh(S)$,
        \item $Sh(\omega,S)$,
        \item $Sh(\omega^*,S)$,
        \item $Sh(\zeta,S)$,
        \item $Sh(\omega,\omega^*,S)$,
    \end{enumerate}
    where $S$ is a finite non-empty subset of $\omega$.
    If $\A'+\A+\L+\B+\B'$ is $sp$-homogeneous, then $\A+\L+\B$ is $\Sigma_3$ definable in $\A'+\A+\L+\B+\B'$ by a computable formula.
\end{corollary}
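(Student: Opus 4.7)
The plan is to produce an explicit $\Sigma_3$ formula defining $\A+\L+\B$, using the formulas $\varphi_{n,<}$ and $\varphi_{n,>}$ from Proposition \ref{prop:Sigma3blockabove} together with a finite block ``marker'' chosen inside each of $\A$ and $\B$. For $\A$ in case (1) set $n_{\A} := |\A|$; in cases (2)--(6) let $n_{\A}$ be any element of the non-empty finite set $S$ of finite block sizes appearing in $\A$, and choose $n_{\B}$ analogously for $\B$. By Theorem \ref{thm:homogChar}, applied to the sp-homogeneous ordering $\A'+\A+\L+\B+\B'$, the blocks of size $n_{\A}$ either form a single unique block (case (1), which must be $\A$ itself) or fill a single maximal shuffle interval that coincides with $\A$ (cases (2)--(6)); in particular no block of size $n_{\A}$ appears in $\A'$, $\L$, or $\B$. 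The symmetric statement holds for $n_{\B}$.

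Next I would write down the candidate definition
\[
\psi(y) \;:=\; \bigl(|[y]| = n_{\A} \,\vee\, \varphi_{n_{\A},>}(y)\bigr) \;\wedge\; \bigl(|[y]| = n_{\B} \,\vee\, \varphi_{n_{\B},<}(y)\bigr).
\]
The predicate $|[y]| = n$ is $\Delta_3$ by Proposition \ref{block1}, and $\varphi_{n,<}$, $\varphi_{n,>}$ are uniformly computable $\Sigma_3$ formulas by Proposition \ref{prop:Sigma3blockabove}. Hence $\psi$ is a computable $\Sigma_3$ formula whose index depends only on $n_{\A}$ and $n_{\B}$.

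The correctness verification proceeds by case analysis. For the forward direction: if $y \in \A$, then in case (1) we have $|[y]| = n_{\A}$, while in cases (2)--(6) the density of $n_{\A}$-blocks in the shuffle sum $\A$ together with the absence of a leftmost such block ensures $\varphi_{n_{\A},>}(y)$ holds; if $y \in \L+\B$, then every element of $\A$ lies below $y$, so an $n_{\A}$-block exists below $y$ and again $\varphi_{n_{\A},>}(y)$ holds. The dual argument handles the right conjunct. Conversely, if $\psi(y)$ holds, then since all $n_{\A}$-blocks live inside $\A$, the left conjunct forces $y$ to lie weakly to the right of $\A$, and dually $y$ lies weakly to the left of $\B$, so $y \in \A+\L+\B$.

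The only real subtlety, and hence the main obstacle, is at a case (1) endpoint: a point $y$ sitting inside the unique $n_{\A}$-block does not satisfy $\varphi_{n_{\A},>}(y)$ on its own, which is exactly why the disjunct $|[y]| = n_{\A}$ must be built into $\psi$. I would also note that the degenerate situations $\A' = \emptyset$ or $\B' = \emptyset$ pose no extra difficulty, since a missing side simply imposes a vacuous constraint, and that the resulting formula is uniform in the computable structural data naming $\A$ and $\B$.
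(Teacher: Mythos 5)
Your proof is correct and follows the same basic approach as the paper's proof: reduce the definability claim to the formulas $\varphi_{n,<}$ and $\varphi_{n,>}$ of Proposition~\ref{prop:Sigma3blockabove}, using a marker block size from each of $\A$ and $\B$. The paper's proof is a one-liner, stating that $\A+\L+\B$ is definable by $\varphi_{n_1,<}(y)$ and $\varphi_{n_2,>}(y)$, and read at face value as the conjunction of those two formulas it does not cover case (1). As you correctly observe, if $\A$ is a single finite block of size $n_{\A}$, that block is the \emph{only} $n_{\A}$-block in the whole ordering (by Theorem~\ref{thm:homogChar}), so no $y\in\A$ has an $n_{\A}$-block strictly below it and the conjunct that is supposed to mark the left boundary fails on all of $\A$ itself. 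Your patch $|[y]| = n_{\A} \vee \varphi_{n_{\A},>}(y)$ (and its mirror for $\B$) is exactly the right fix, and your case analysis, the complexity bookkeeping ($|[y]| = n$ is $\Delta_3$, hence $\Sigma_3$, and $\Sigma_3$ is closed under finite conjunction and disjunction), and the remark about the vacuous cases $\A'=\emptyset$ or $\B'=\emptyset$ are all sound.

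One small notational remark, inherited from the source and carried into your writeup: the statement of Proposition~\ref{prop:Sigma3blockabove} says $\varphi_{n,<}$ holds of points ``below some block of size $n$,'' but the explicit formula given in its proof, $\exists x_1,\dots,x_n<y\,\dots$, actually asserts that an $n$-block lies \emph{below} $y$. These two readings are opposite. You followed the prose description (so you write $\varphi_{n_{\A},>}$ for ``above an $n_{\A}$-block''), whereas the paper's proof of the corollary follows the displayed formula (so it writes $\varphi_{n_1,<}$ for the same meaning). Your semantics are right; just be aware that your subscripts are flipped relative to the paper's own proof, which could confuse a reader trying to reconcile the two.
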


\begin{proof}
    Let $\A$ be associated with the finite set $S_1$ and $\B$ with the finite set $S_2$ with block sizes $n_1\in S_1$ and $n_2\in S_2$. Then
    $\A+\L+\B$ is definable by $\varphi_{n_1,<}(y)$ and $\varphi_{n_2,>}(y)$.
\end{proof}

We now analyze $\omega$ as an individual block.
This analysis will be emblematic of all of the remaining arguments.
Before diving into the technical details, we offer some informal discussion of our approach and the intuition behind it.
A priori, it may be surprising that a block of type $\omega$ can ever pose a threat to the uniform relative $\Delta_3$ categoricity of the ordering.
After all, $\omega$ itself is relatively $\Delta_2$ categorical with automorphism orbits given by simply counting the number of predecessors that each element has.
It is just this that allows the orbits within $\omega$ to be sometimes defined even when placed inside the context of a larger ordering.
Consider the following example suggested by Proposition \ref{prop:Sigma3blockabove}.
Say that there is an $\omega$ block within an $sp$-homogeneous ordering of the form $\cdots+Sh(S)+\omega+Sh(S')+\cdots$ where $S$ and $S'$ are finite subsets of $\omega$.
By Proposition \ref{prop:Sigma3blockabove}, we can restrict our attention to $Sh(S)+\omega+Sh(S')$ using uniformly computable $\Sigma_3$ formulas.
Within this structure, we are then permitted to use a slight variation of the ordinary definition of the orbits within $\omega$ to obtain definitions of the orbits in $Sh(S)+\omega+Sh(S')$ (as in Proposition \ref{prop:Delta3begin}).
The takeaway here is that sometimes the local behavior of an \textit{sp}-homogeneous function near the $\omega$ block helps us define the orbits within it.
In contrast, some local behavior stops us from having nice orbit definitions.
An example that we explore more precisely in the formal proposition below is when the ordering is of the form $\cdots+Sh(T)+\omega+Sh(T')+\cdots$ where $T$ and $T'$ are infinite subsets of $\omega$.
In this case, one could still apply Proposition \ref{prop:Sigma3blockabove} as we did previously to isolate $Sh(T)+\omega+Sh(T')$ with $\Sigma_3$ formulas.
That said, this is not as helpful a move here, because we are unable to apply Proposition \ref{prop:Delta3begin} to obtain actual definitions from this point.
In fact, an argument using freeness similar to the one seen in Proposition \ref{prop:forbiddenShuffles} shows that this arrangement forbids the existence of $\Sigma_3$ definitions for orbits.
Intuitively, the $\omega$ block is too mixed in with arbitrarily large finite blocks to extract an appropriate definition. 
Our approach in the following proposition, then, is to enumerate all of the possible local behaviors near $\omega$ (as understood through the lens of the cases in Proposition \ref{prop:CasesDef}) and determine which local behavior gives rise to $\Sigma_3$ definitions for orbits and which ones forbid such definitions.
By ultimately exhausting all of the different possibilities, we give a full accounting of the allowable and disallowable behavior of an $\omega$ block within a uniformly relatively $\Delta_3$ categorical $sp$-homogeneous linear ordering.

\begin{proposition}\label{prop:omegaAnalysis}
    Consider an \textit{sp}-homogeneous linear ordering $\L=\A+\omega+\B$ where the written $\omega$ block is the only $\omega$ block (i.e., $\omega$ does not appear in a shuffle sum).
    \begin{enumerate}
        \item If $\A$ is in Case 3, then $\L$ is not uniformly relatively $\Delta_3$ categorical.
        \item If $\A$ is in Case 1 so $\A=\A'+Sh(S)$ where $S\subseteq\omega$ is infinite, then $\L$ is not uniformly relatively $\Delta_3$ categorical.
        \item If $\A$ is in Case 1 so $\A=\A'+Sh(S)$ where $S\subseteq\omega$ is finite, then the tuples in $\omega$ have uniformly computable $\Sigma_3$ definable automorphism orbits.
        \item If $\A$ is in Case 1 so that $\A=\A'+Sh(S)$ where $S$ includes an infinite block and finitely many finite blocks:
        \begin{enumerate}
            \item If $\B$ is in Case 1* with $\B=Sh(T)+\B'$ and $T$ has a non-zero, finite number of finite blocks, then the tuples in $\omega$ have uniformly computable $\Sigma_3$ definable automorphism orbits.
            \item If $\B$ is in Case 1* with $\B=Sh(T)+\B'$ and $T\subseteq \omega$ is infinite then $\L$ is not uniformly relatively $\Delta_3$ categorical.
            \item If $\B$ is in Case 2* with $\B=k+\B'$ for some $k\in\omega$ then the tuples in $\omega$ have uniformly computable $\Sigma_3$ definable automorphism orbits.
            \item If $\B$ is in Case 3* then $\L$ is not uniformly relatively $\Delta_3$ categorical.
        \end{enumerate}
    \end{enumerate}
\end{proposition}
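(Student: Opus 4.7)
The proposition splits naturally into positive sub-claims ((3), (4a), (4c)) --- where the plan is to construct uniform $\Sigma_3$ definitions of the orbits of $\omega$-elements --- and negative sub-claims ((1), (2), (4b), (4d)) --- where the plan is to establish that the $k$-th element of $\omega$ is $3$-free. In every case, the outcome is driven by what surrounds the $\omega$ block in $\L$: a ``tame'' neighborhood (a shuffle sum with only finitely many finite block sizes, or a single isolated block) will permit $\Sigma_3$ definability, while a ``wild'' neighborhood (one with arbitrarily large finite blocks nearby) will force freeness.

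For the positive cases, my plan is to combine the $\Sigma_3$ formulas $\varphi_{n,<}$ and $\varphi_{n,>}$ from Proposition \ref{prop:Sigma3blockabove} with the $\Delta_3$ predecessor-count conjuncts $P_{k-1}(x) \land \neg P_k(x)$ used in the proof of Proposition \ref{prop:Delta3begin}. In case (3), the hypothesis $\A = \A' + Sh(S)$ with $S$ finite gives a finite bound $N = \max(S)$ on block sizes appearing just below $\omega$, and a formula of the shape $\bigwedge_{n \in S} \varphi_{n,<}(x) \land S_{N+1}(x) \land P_{k-1}(x) \land \neg P_k(x)$ will isolate the $k$-th element of the unique $\omega$ block. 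Cases (4a) and (4c) will be handled analogously, additionally using the bounded block structure on the right of $\omega$ (from a $Sh(T)$ with only finitely many finite block sizes in (4a), or from the leading isolated block of $\B$ in (4c)) as an extra constraint via $\varphi_{n,>}$-formulas.

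For the negative cases, the plan is to prove $3$-freeness of the $k$-th element $a$ of $\omega$, following the template of Proposition \ref{prop:forbiddenShuffles}. Given a tuple $\bar{b}$ --- assumed without loss of generality to be saturated in its blocks and to meet $\omega$ in an initial segment $\bar{b}_\omega$ of size $m$ containing $a$ at position $k$ --- the hypothesis will guarantee a finite block $F$ of size greater than $m$ situated between the parts of $\bar{b}$ outside $\omega$: inside $\A$'s end segment for (1), inside $Sh(S) \subseteq \A$ with $S$ infinite for (2), inside $Sh(T) \subseteq \B$ with $T$ infinite for (4b), or in $\B$'s initial segment for (4d). I will set $\bar{a}'$ to be the $k$-th element of $F$ and $\bar{b}'$ to be the tuple obtained by replacing $\bar{b}_\omega$ with the first $m$ elements of $F$. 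The relation $\bar{a} \not\leq_3 \bar{a}'$ will be witnessed by the $\Pi_3$ property ``has infinitely many successors in its own block'', which holds of $a$ but not of $a'$.

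The main obstacle is verifying $\bar{a}\bar{b} \leq_2 \bar{a}'\bar{b}'$ interval-by-interval. Intervals untouched by the replacement are identical and trivially satisfy $\leq_2$, so only the two intervals adjacent to $\bar{b}_\omega$ (respectively $\bar{b}'_F$) need inspection. In the shuffle-sum cases (2) and (4b), absorption identities such as $Sh(A) + F \cong Sh(A) \cong F + Sh(A)$ for finite $F$ will make the changed intervals isomorphic to the originals, so $\leq_2$ is automatic. In the Case 3 / Case 3* cases (1) and (4d) absorption is unavailable, and I will invoke Theorem \ref{thm:2bnf} directly: each of the two intervals contains arbitrarily long successor chains (since $\omega$ lies in both), so it suffices to check that their first- and last-block sizes at the comparison endpoints match appropriately, and the sufficient condition of Theorem \ref{thm:2bnf} then delivers the required level-2 domination. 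This interval bookkeeping is where the proof must be carried out most carefully.
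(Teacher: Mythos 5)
Your plan for the negative cases ((1), (2), (4b), (4d)) mirrors the paper's argument: place a finite block $F$ in the wild side of $\omega$, swap the initial segment of $\omega$ in $\bar b$ for the corresponding segment of $F$, witness $\bar a \not\leq_3 \bar a'$ by the different successor count, and verify $\leq_2$ on the two changed intervals via Theorem \ref{thm:2bnf}. One small inaccuracy: in cases (2) and (4b) the swapped intervals are generally \emph{not} isomorphic (e.g.\ in (4b) the interval below $\bar b_2'$ suddenly contains the whole $\omega$ block), so the ``absorption makes them isomorphic'' shortcut fails and you do have to run the Theorem \ref{thm:2bnf} bookkeeping there too, just as in cases (1) and (4d). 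This is minor, since the bookkeeping goes through.

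The genuine gap is in case (3). Your candidate formula $\bigwedge_{n\in S}\varphi_{n,<}(x)\land S_{N+1}(x)\land P_{k-1}(x)\land\lnot P_k(x)$ does not isolate the orbit of the $k$-th element of $\omega$, because case (3) makes \emph{no hypothesis on $\B$}: the only requirement is that $\B$ contains no $\omega$ block and avoids the block sizes in $S$. So $\B$ may contain arbitrarily large finite blocks. Concretely, take $\L = Sh(\{1,2\}) + \omega + Sh(\{5\})$, so $S=\{1,2\}$, $N=2$, $k=1$. The first element of any $5$-block in $Sh(\{5\})$ lies above blocks of size $1$ and $2$, has $4 \geq N+1$ successors, and has $0 = k-1$ predecessors, so it satisfies your formula but is not in the $\omega$ block. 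The missing ingredient is a mechanism that forces $x$ to sit \emph{below} $\B$; the paper's formula $\chi_{n,S}(x)$ achieves this by existentially quantifying a witness $b$ and requiring that the interval $(b,y_1)$ (where $y_1$ is the first predecessor of $x$) contain no successor chain of length $N+1$ while still containing blocks of every size in $S$. If $x$ were in $\B$, any such $b$ below $\B$ has the whole $\omega$ block (with long chains) between $b$ and $x$, and any $b$ in $\B$ has no $S$-sized blocks above it, so either clause fails. This witness-and-local-window trick is a real additional idea that your sketch does not supply; cases (4a) and (4c) avoid the difficulty only because a bounded block structure on the right of $\omega$ lets you bracket with $\varphi_{n,>}$, a move unavailable in case (3).
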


\begin{proof}
    We begin with (1).
    More explicitly, $\L=\A+\omega+\B$ where $\A$ has arbitrarily large finite blocks in each final segment.
    Fix an element $a$ lying in $\omega$.
    We claim that $a$ is $3$-free.
    Let $\bar{b}$ be the tuple selected by the Spoiler.
    Write $\bar{b}=\bar{b}_1\bar{b}_2\bar{b}_3$ where $\bar{b}_1$ contains elements in $\A$, $\bar{b}_2$ contains elements in $\omega$ and $\bar{b}_3$ contains elements in $\B$.
    Without loss of generality, $\bar{b}_2$ is a finite interval of size $m$ including $a$.
    Say that $a$ is the $k^{th}$ element of $\bar{b}_2$.
    Let $a'$ be the $k^{th}$ element of a finite block, $F$ greater than any element finitely far from the elements in $\bar{b}_1$ and still in $\A$ (this is possible because $\A$ is in Case 3).
    Let $\bar{b}'=\bar{b}_1\bar{b}'_2\bar{b}_3$ where $\bar{b}'_2$ is the first $m$ elements of $F$.
    It is immediate that $\bar{a}\not\geq_3\bar{a}'$ as the two elements have a different number of successors. 
    We claim that $\bar{a}\bar{b}\leq_2\bar{a}'\bar{b}'$.
    Checking interval by interval, the only non-isomorphic corresponding intervals are the ones below and above $\bar{b}_2$ and $\bar{b}'_2$.
    Write the interval above $\bar{b}_2$ as $\omega+\C+K$ where $K$ is the (potentially empty) last block and $\C$ has no last element.
    The interval above $\bar{b}'_2$ is then isomorphic to $l+P+\omega+\C+K$ for some $l\in\omega$.
    The fact that
    \[l+P+\omega+\C+K\geq_2 \omega+\C+K\]
    follows from Theorem \ref{thm:2bnf}.
    The interval below $\bar{b}_2$ is $K+\D+n$ where $K$ is the (potentially empty) first block, $\D$ is in Case 3 with no least element, and $n\in\omega$.
    The interval below $\bar{b}'_2$ is then isomorphic to $K+\D'$, where $\D'$ has no last or least element and $\D'\hookrightarrow\D$ initially.
    We now observe that
    \[K+\D'\geq_2K+\D+n.\]
    In particular, if we let $\iota: K+\D'\to K+\D+n$ be the natural initial embedding, playing along $\iota$ is an easily confirmed winning strategy for Duplicator.

    (2) follows from a similar argument to the one above.
    In this case, $\L=\A'+Sh(S)+\omega+\B$ for some infinite $S\subseteq\omega$.
    It is just different enough to be included for completeness.
    Fix an element $a$ lying in $\omega$,.
    We claim that $a$ is $3$-free.
    Let $\bar{b}$ be the tuple selected by the Spoiler.
    Write $\bar{b}=\bar{b}_1\bar{b}_2\bar{b}_3$ where $\bar{b}_1$ contains elements in $\A$ and (without loss of generality) contains a largest block in $Sh(S)$, $\bar{b}_2$ contains elements in $\omega$ and $\bar{b}_3$ contains elements in $\B$.
    Without loss of generality, $\bar{b}_2$ is a finite interval of size $m$ including $a$.
    Say that $a$ is the $k^{th}$ element of $\bar{b}_2$.
    Let $a'$ be the $k^{th}$ element of a finite block, $F$ greater than any element finitely far from the elements in $\bar{b}_1$ in $Sh(S)$ (this is possible because $S$ is infinite).
    Let $\bar{b}'=\bar{b}_1\bar{b}'_2\bar{b}_3$ where $\bar{b}'_2$ is the first $m$ elements of $F$.
    It is immediate that $\bar{a}\not\geq_3\bar{a}'$ as the two elements have a different number of successors. 
    We claim that $\bar{a}\bar{b}\leq_2\bar{a}'\bar{b}'$.
    Checking interval by interval, the only non-isomorphic corresponding intervals are the ones above and below $\bar{b}_2$ and $\bar{b}'_2$.
    Write the interval above $\bar{b}_2$ as $\omega+\C+K$ where $K$ is the (potentially empty) last block and $\C$ has no last element.
    The interval above $\bar{b}'_2$ is then isomorphic to $l+Sh(S)+\omega+\C+K$ for some $l\in\omega$.
    The fact that
    \[l+Sh(S)+\omega+\C+K\geq_2 \omega+\C+K\]
    follows from Theorem \ref{thm:2bnf}.
    The interval below $\bar{b}_2$ is $Sh(S)+n$ where $n\in\omega$
    The interval below $\bar{b}'_2$ is isomorphic to $Sh(S)$.
    The fact that
    \[Sh(S)\geq_2Sh(S)+n\]
    follows from Theorem \ref{thm:2bnf}.

    We now move to (3).
    In this case, $\L=\A'+Sh(S)+\omega+\B$ where $S$ is a finite subset of $\omega$.
    We define the following helper predicates.
    Given a tuple of $k$ elements $\bar{y}$, we let $S_k(\bar{y})$ be the $\Pi_1$ statement that the elements of $\bar{y}$ are in a successor chain.
    Given a tuple of $k$ elements $\bar{y}$ we let $Bk_k(\bar{y})$ be the $\Pi_2$ statement that the elements of $\bar{y}$ form a size $k$ block.
    Given $N=\max(S)$, define the $\Sigma_3$ formula
    \[\chi_{n,S}(x):=S_N(x)\land \lnot P_{n+1}(x) \land \exists b<y_1<\cdots<y_n<x ~ \Big(S_{n+1}(\bar{y},x)\land  \]
    \[\forall z_0<\cdots<z_N\in(b,y_1)\lnot S_{N+1}(\bar{z})\land\bigwedge_{t\in S}\exists z_1<\cdots<z_t\in(b,y_1)~Bk_t(\bar{z})\Big).\]
    In words, $\chi_{n,T}(x)$ holds if $x$ 
    \begin{itemize}
        \item has $N$ successors
        \item does not have $n+1$ predecessors
        \item has $n$ predecessors
        \item below the $n^{th}$ predecessor and above some element $b$, there is no sequence of $N+1$ successors
        \item below the $n^{th}$ predecessor and above that same element $b$, there are blocks of size $t$ for every $t\in S$.
    \end{itemize}

    We claim that $\chi_{n,S}(x)$ holds exactly of the element in position $n$ of the written $\omega$.
    Consider an element $x_0\in\L$ such that $\L\models\chi_{n,S}(x)$.
    Consider the possibility that $x_0\in\A'$.
    Note that $x_0$ does not have blocks of size $t\in S$ below it by $sp$-homogeneity, so $\chi_{n,S}(x)$ cannot hold of such an element by the last bullet point.
    Consider the possibility that $x_0\in Sh(S)$. Note that $x_0$ does not have $N$ successors, so $\chi_{n,S}(x)$ cannot hold of such an element by the first bullet point.
    Consider the possibility that $x_0\in \B$.
    There must be a witness $b_0$ to statements in the fourth and fifth bullet points.
    $b_0$ cannot be in $\B$ by the fifth bullet point, as then there are no blocks of size $t\in S$ above $b_0$ by $sp$-homogeneity.
    We have that $b_0$ cannot be below $\B$ by the fourth bullet point, as then there would be an arbitrarily long successor chain in the form of the $\omega$ block between $b_0$ and $x_0$.
    Therefore, $x_0$ cannot be in $\B$.
    Any element that is not in position $n$ of the $\omega$ either has $n+1$ predecessors or does not have $n$ predecessors, so $\chi_n(x)$ cannot hold of such an element by the second and third bullet points. 
    Lastly, it is direct to confirm that $\chi_n(x)$ holds of the element in position $n$ of the $\omega$; the witness for $b$ can be any element from $Sh(S)$.
    Therefore, $\chi_{n,S}(x)$ isolates the automorphism orbit of this element.
    Taking conjunctions over the isolating formulas defined above yields the desired automorphism definitions for tuples in the $\omega.$

    We now analyze (4).
    In (a) and (c), we have that $\L=A'+Sh(S)+\omega+Sh(T)+\B'$ or $\L=A'+Sh(S)+\omega+k+\B'$ where $S$ includes finitely many finite blocks and an infinite block and $T$ is a finite set of finite blocks. 
    Observe that by Corollary \ref{cor:defBoundedSummand} it is enough to define the orbits in $\omega$ within $Sh(S)+\omega+Sh(T)$ and $Sh(S)+\omega+k$ respectively.
    In both cases, we can let $N=\max(S,T,k)$ and define the $n^{th}$ element of $\omega$ by saying
    \[\lnot P_{n+1}(x)\land S_{N+1}(x)\land P_n(x).\]
    By assumption, $S_{N+1}(x)$ can only hold of elements in infinite blocks in the ordering.
    Any element with $\lnot P_{n+1}(x)$ must be in the $\omega$ block as it cannot be in a $\zeta$ or $\omega^*$ block.
    Within the $\omega$ block $\lnot P_{n+1}(x)\land P_n(x)$ clearly gives the desired definition.

    We can therefore move to the analysis of (b) and (d).
    In these cases, we have that $\L=A'+Sh(S)+\omega+Sh(T)+\B'$ or $\L=A'+Sh(S)+\omega+\B$ where $S$ includes finitely many finite blocks and an infinite block, $T$ is a finite set of finite blocks, and $\B$ contains arbitrarily large finite blocks in each initial segment.
    These arguments are similar to those in (1) and (2).
    We provide the proof of just (d), as it is the more involved of the two, and the proof of (b) is quite similar to what has already been seen.
    Fix an element $a$ lying in $\omega$,.
    We claim that $a$ is $3$-free.
    Let $\bar{b}$ be the tuple selected by the Spoiler.
    Write $\bar{b}=\bar{b}_1\bar{b}_2\bar{b}_3$ where $\bar{b}_1$ contains elements in $\A$, $\bar{b}_2$ contains elements in $\omega$ and $\bar{b}_3$ contains elements in $\B$.
    Without loss of generality, $\bar{b}_2$ is a finite interval of size $m$ including $a$.
    Say that $a$ is the $k^{th}$ element of $\bar{b}_2$.
    Let $a'$ be the $k^{th}$ element of a finite block, $F$, less than any element finitely far from the elements in $\bar{b}_3$ and in $\B$ (this is possible because $\B$ is in Case 3*).
    Let $\bar{b}'=\bar{b}_1\bar{b}'_2\bar{b}_3$ where $\bar{b}'_2$ is the first $m$ elements of $F$.
    It is immediate that $\bar{a}\not\geq_3\bar{a}'$ as the two elements have a different number of successors. 
    We claim that $\bar{a}\bar{b}\leq_2\bar{a}'\bar{b}'$.
    Checking interval by interval, the only non-isomorphic corresponding intervals are the ones below and above $\bar{b}_2$ and $\bar{b}'_2$.
    Write the interval above $\bar{b}_2$ as $\omega+\C+K$ where $K$ is the (potentially empty) last block and $\C$ has no last element.
    The interval above $\bar{b}'_2$ is then isomorphic to $l+\C'+K$ for some $l\in\omega$ and $\C'$ with no last element.
    The fact that
    \[l+\C'+K\geq_2 \omega+\C+K\]
    follows from Theorem \ref{thm:2bnf}.
    The interval below $\bar{b}_2$ is $K+\D+n$ where $K$ is the (potentially empty) first block, $\D$ is in Case 3 with no least element, and $n\in\omega$.
    The interval below $\bar{b}'_2$ is then isomorphic to $K+\D'$, where $\D'$ has no last or least element and $k\in\omega$.
    The fact that
    \[K+\D'+k\geq_2K+\D+n\]
    follows from Theorem \ref{thm:2bnf}.
\end{proof}

We summarize the results from the above proposition in the table below
The notation $Sh(inf, bdd)$ refers to a shuffle sum of finitely many finite blocks and an infinite block.
\begin{center}
\begin{tabular}{c|c|c|c|c}
     $Col+\omega+row$& Case 3 & $\A'+Sh(ubd)$ & $\A'+Sh(inf, bdd)$ & $\S'+Sh(bdd)$\\
     \hline
   Case 3  & $\Delta_4$ & $\Delta_4$ & $\Delta_4$ & $\Delta_3$  \\   \hline
   $Sh(ubd)+\B'$  & $\Delta_4$ & $\Delta_4$ & $\Delta_4$ & $\Delta_3$ \\  \hline
   $Sh(inf, bdd)+\B'$  & $\Delta_4$ & $\Delta_4$ & $\Delta_3$ & $\Delta_3$ \\  \hline
   $Sh(bdd)+\B'$  & $\Delta_4$ & $\Delta_4$ & $\Delta_3$ & $\Delta_3$ \\  \hline
   $k+\B'$  & $\Delta_4$ & $\Delta_4$ & $\Delta_3$ & $\Delta_3$
\end{tabular}
\end{center}

This table uses the column labels to indicate the linear ordering below $\omega$ and the row labels to indicate the linear ordering above $\omega$.
$\Delta_4$ is used to indicate that this combination explicitly precludes the case that the overall linear ordering is $\Delta_3$ uniformly relatively categorical.
$\Delta_3$ is used to indicate that the automorphism orbits in the copy of $\omega$ are appropriately definable, in other words, that the elements of $\omega$ pose no challenge to the linear orderings being $\Delta_3$ uniformly relatively categorical.

Conspicuously missing from the above results are the cases where $\omega$ is abutted by other purely infinite blocks (i.e., $\zeta, \omega^*, \zeta\cdot\eta, \omega^*\cdot\eta$).
These cases have not been forgotten; they will be addressed later on.
We first note the following generalization of certain parts of the above theorem.

\begin{theorem}\label{thm:generalizedFree}
    Let $I$ contain only infinite blocks. Let $\A+I+\B$ be $sp$-homogeneous.
    \begin{enumerate}
        \item If $\A$ is in Case 3 and $I$ has no first element then $\L$ is not uniformly relatively $\Delta_3$ categorical.
        \item If $\B$ is in Case 3* and $I$ has no last element then $\L$ is not uniformly relatively $\Delta_3$ categorical.
        \item If $\A$ is in Case 1 so $\A=\A'+Sh(S)$ where $S\subseteq\omega$ is infinite and $I$ has no first element then $\L$ is not uniformly relatively $\Delta_3$ categorical.
        \item If $\B$ is in Case 1* so $\B=Sh(S)+\B'$ where $S\subseteq\omega$ is infinite and $I$ has no last element then $\L$ is not uniformly relatively $\Delta_3$ categorical.
    \end{enumerate}
\end{theorem}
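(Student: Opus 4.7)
The plan is to establish 3-freeness of a carefully chosen element $a$ in $I$ for each of the four cases, which by the characterization of $\alpha$-freeness recalled just before Proposition \ref{prop:forbiddenShuffles} rules out uniform relative $\Delta_3$ categoricity. In each case the strategy closely mirrors that of Proposition \ref{prop:omegaAnalysis}: identify an infinite block $B \subseteq I$ and some $a \in B$, and show that Duplicator can respond to any Spoiler play $\bar{b}$ containing $a$ by relocating the portion $\bar{b}_2 \subseteq B$ into a suitable finite block elsewhere, yielding $\bar{a}\bar{b} \leq_2 \bar{a}'\bar{b}'$ while $\bar{a} \not\leq_3 \bar{a}'$. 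The latter inequivalence is always detected by the number of predecessors or successors, since $a$ remains in an infinite block while $a'$ ends up in a finite one.

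For case (1), since $I$ contains only infinite blocks and has no first element, we take $B$ to be the leftmost block of $I$ if one exists (necessarily of type $\omega^*$ or $\zeta$); otherwise we take $B$ arbitrary. Given Spoiler's play $\bar{b} = \bar{b}_1 \bar{b}_2 \bar{b}_3$ with $\bar{b}_2 \subseteq B$ a finite successor-chain through $a$ (saturated in the sense appropriate to the type of $B$), Case 3 of $\A$ supplies a finite block $F \subseteq \A$ of size exceeding $|\bar{b}_2|$ lying above every element of $\bar{b}_1 \cap \A$. Duplicator sets $\bar{b}'_2$ to the analogously positioned successor-chain inside $F$ and keeps $\bar{b}_1$ and $\bar{b}_3$ in place. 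The 2-equivalence is then checked interval by interval via Theorem \ref{thm:2bnf}. The only intervals that differ from their counterparts are the ones adjacent to the move, and on both sides these intervals have arbitrarily long successor chains (from $B$ on the original side and from Case 3 of $\A$ on the response side), with matching first/last block sizes thanks to the hypothesis that $I$ has no first element.

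Cases (2), (3), (4) proceed by dual or analogous arguments: (2) is symmetric to (1) on the right using Case 3* of $\B$; (3) and (4) replace the Case 3 (respectively Case 3*) hypothesis with the existence of $Sh(S) \subseteq \A$ (respectively $\B$) for some infinite $S \subseteq \omega$, which equally supplies finite blocks of arbitrarily large sizes into which to relocate $\bar{b}_2$. The main obstacle I anticipate is the careful verification of the 2-equivalence on the boundary intervals, specifically when $I$ has no leftmost (respectively rightmost) block so that $\bar{b}_1$ (respectively $\bar{b}_3$) may contain $I$-elements outside $B$ that must be coherently tracked; the argument will need to either simultaneously shift these elements within $I$ or invoke Theorem \ref{thm:2bnf} directly, using that the Case 3 richness of $\A$ together with the infinite-block content of $I$ makes the relevant intervals comparable at level 2 despite the apparent structural mismatch.
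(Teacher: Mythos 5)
Your high-level strategy is the same as the paper's: exhibit a 3-free tuple in $I$ by having Duplicator relocate Spoiler's play into a finite block supplied by the richness of $\A$ (or $\B$), with the $\not\leq_3$ inequivalence witnessed by predecessor/successor counts and the $\leq_2$ equivalence checked interval-by-interval via Theorem \ref{thm:2bnf}. For the cases where $I$ has a leftmost (resp.\ rightmost) block, this essentially reproduces Proposition \ref{prop:omegaAnalysis}(1)--(2), and the paper indeed handles those cases by direct citation.

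However, the piece you flag as the "main obstacle" --- what to do when Spoiler plays $I$-elements outside the block containing $a$ --- is exactly where your proposal leaves a genuine gap, and the first alternative you offer for closing it is wrong in a way that matters. You suggest the Duplicator might "simultaneously shift these elements within $I$," but shifting within $I$ cannot work: the Spoiler's other $I$-elements also lie in infinite blocks, so a shift within $I$ does not change their predecessor/successor profiles and does not produce the interval shapes needed for the $\geq_2$ verifications. The paper's resolution is that \emph{all} the $I$-blocks Spoiler hits on the relevant side of $a$ are moved \emph{out of} $I$, into distinct finite blocks of $\A$ (for cases 1, 3) or $\B$ (for cases 2, 4), preserving their relative order and approximate sizes; only then do the boundary intervals take the form $K+Sh(\cdot)+\cdots$ or $K + \D'$ that Theorem \ref{thm:2bnf} compares favorably against $\omega + \omega\cdot\eta$, etc. Your second alternative, "invoke Theorem \ref{thm:2bnf} directly," begs the question: Theorem \ref{thm:2bnf} only compares two given intervals, so one must first decide where the Duplicator's responses go.

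You also omit a structural observation that the paper uses to organize the case split and that makes the multi-block case tractable: since there are only finitely many infinite block types and the forbidden shuffles of Proposition \ref{prop:forbiddenShuffles} are excluded, $I$ decomposes as a finite sum of convex pieces each of type $\omega$, $\omega^*$, $\zeta$, $\omega\cdot\eta$, $\omega^*\cdot\eta$, $\zeta\cdot\eta$, or $Sh(\omega,\omega^*)$. The hypothesis that $I$ has no first (resp.\ last) element then pins down the type of the extremal piece $J$, and $a$ should be chosen inside $J$ (e.g., the initial element of one of the $\omega$-blocks of $\omega\cdot\eta$), not merely in "the leftmost block if it exists, else arbitrary." Choosing $a$ in an arbitrary block, as your case (1) permits when there is no leftmost block, makes the subsequent interval bookkeeping substantially harder and the $\leq_2$ verification does not obviously go through; the paper's choice of $a$ in the extremal piece $J$ is what keeps the finitely many unchanged intervals of the right shape.
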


\begin{proof}
   The same proofs or even simpler proofs (or the same approach applied to $\L^*$) as those seen in Proposition \ref{prop:omegaAnalysis} (1) and (2) also demonstrate these results.
   For this reason, we do not provide the proofs of these facts in complete detail, instead preferring to highlight a few key cases.

    We will exclusively refer to the second and fourth of the above claims, noting that the first and the third follow by symmetry.
    Note that there are only finitely many possible infinite block types, so $I$ can be written as a finite sum of the pure infinite blocks not disallowed by Proposition \ref{prop:forbiddenShuffles}: $\zeta, \omega^*, \zeta\cdot\eta, \omega^*\cdot\eta$, $\omega$, and $Sh(\omega,\omega^*)$.
    In particular, $I$ has a final, convex subordering $J$ of one of the above types.
    Assume that $I$ has no greatest element; $J$ must also have no greatest element.
    If that ordering is of type $\omega$ or type $\zeta$, the same \textit {exact} argument from Proposition \ref{prop:omegaAnalysis} (1) and (2) shows that the first and third claims hold.

    There is a slight variation of the argument in the case that $J$ is a dense arrangement of infinite blocks.
    For concreteness, say that $J=\omega\cdot\eta$; all of the other cases are precisely the same.
    We show that any initial element $a$ of one of the $\omega$ blocks is 3-free when $\B$ is in Case 3.
    This shows the second of the above claims, while the fourth of the above claims follows from a similar (simpler) argument.
    Let $\bar{b}$ be the tuple selected by the Spoiler.
    Write $\bar{b}=\bar{b}_1\bar{b}_2\bar{b}_3\bar{b}_4$ where $\bar{b}_1$ contains elements below $a$, $\bar{b}_2$ contains elements in the copy of $\omega$ containing $a$, $\bar{b}_3$ contains the elements in a greater $\omega$ block than $a$ within $\omega\cdot\eta$, and $\bar{b}_4$ contains elements in $\B$.
    Without loss of generality, $\bar{b}_2$ is a finite interval of size $m$ including $a$.
    Similarly, $\bar{b}_3=\bar{b}_3^1\cdots\bar{b}_3^k$ is a finite sequence of finite intervals of size $m$ containing the first element of each $\omega$ they intersect.
    Let $a',b_3^{1'}\cdots b_3^{k'}$ be the first elements of finite blocks, $F_i$, less than any element finitely far from the elements in $\bar{b}_3$ and in $\B$ (this is possible because $\B$ is in Case 3*).
    Let $\bar{b}'=\bar{b}_1\bar{b}'_2\bar{b}'_3\bar{b}_4$ where $\bar{b}'_2$ is the first $m$ elements of $F_0$ and $\bar{b}'_3$ is the first $m$ elements of each of the $F_i$ with $i>0$.
    It is immediate that $\bar{a}\not\geq_3\bar{a}'$ as the two elements have a different number of successors. 
    We claim that $\bar{a}\bar{b}\leq_2\bar{a}'\bar{b}'$.
    Checking interval by interval, the only non-isomorphic corresponding intervals are the ones below, above, and between $\bar{b}_2,\bar{b}_3$ and $\bar{b}'_2,\bar{b}_3'$.
    The intervals between $\bar{b}_2,\bar{b}_3$ are all isomorphic to $\omega+\omega\cdot\eta$.
    The intervals between $\bar{b}_2',\bar{b}_3'$ are all isomorphic to some $\P$, which is infinite and has no least element, by construction.
    The fact that
    \[\P\geq_2 \omega+\omega\cdot\eta\]
    follows from Theorem \ref{thm:2bnf}.
    The interval below $\bar{b}_2$ is $K+\D+Sh(\omega)$ where $K$ is the (potentially empty) first block, $\D$ is some arbitrary linear ordering with no least element.
    The interval below $\bar{b}'_2$ is then isomorphic to $K+\D+Sh(\omega)+\E$ where $\E$ has no greatest element.
    The fact that
    \[K+\D+Sh(\omega)+\E \geq_2K+\D+Sh(\omega)\]
    follows from Theorem \ref{thm:2bnf}.
    The interval above $\bar{b}_3$ is $Sh(\omega)+\D$ where $\D$ is some arbitrary linear ordering.
    The interval above $\bar{b}'_3$ is then isomorphic to $\D'$ where $\D'$ is an infinite, final segment of $\D$.
    The fact that
    \[\D' \geq_2Sh(\omega)+\D\]
    follows from Theorem \ref{thm:2bnf}.

    With all other arguments being analogous, the theorem follows.
\end{proof}

We can use these observations to understand the behavior of many other block types.

\begin{corollary}
    Consider an $sp$-homogeneous linear ordering $\L=\A+I+\B$ where  $I=\zeta$, $\omega\cdot\eta$, $Sh(\omega,\omega^*)$, $\omega^*\cdot\eta$ or $\zeta\cdot\eta$.
    \begin{enumerate}
        \item If $\A$ or $\B$ is in Case 3, then $\L$ is not uniformly relatively $\Delta_3$ categorical.
        \item If $\A$ is in Case 1 so $\A=\A'+Sh(S)$ where $S\subseteq\omega$ is infinite then $\L$ is not uniformly relatively $\Delta_3$ categorical.
        \item If $\B$ is in Case 1* so $\B=\B'+Sh(S)$ where $S\subseteq\omega$ is infinite then $\L$ is not uniformly relatively $\Delta_3$ categorical.
        \item If $\A=\A'+k$ or $\A=\A'+Sh(S)$ with $S\subseteq\omega\cup\{\zeta,\omega,\omega^*\}$ finite and $\B=l+\B'$ or $\B=Sh(T)+\B'$ with $T\subseteq\omega$, then the tuples in $I$ have uniformly computable $\Sigma_3$ definable automorphism orbits.
    \end{enumerate}   
    
\end{corollary}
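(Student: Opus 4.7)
The argument will split into the three negative statements (1)--(3) and the positive statement (4).

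For the negative statements, my plan is simply to invoke Theorem \ref{thm:generalizedFree}. Each of the five interval types listed for $I$ --- namely $\zeta$, $\omega\cdot\eta$, $Sh(\omega,\omega^*)$, $\omega^*\cdot\eta$, and $\zeta\cdot\eta$ --- has neither a first nor a last element, so all four clauses of that theorem apply on whichever side is needed. Concretely, the hypothesis of item (1) triggers parts (1) and (2) of Theorem \ref{thm:generalizedFree}, while items (2) and (3) trigger parts (3) and (4), respectively. No further freeness witnesses need to be constructed by hand; the abstract statement is already tailored to this setting.

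For item (4), the plan mirrors the positive direction of Proposition \ref{prop:omegaAnalysis}(3). The idea is to first confine attention to a bounded ``focusing'' neighbourhood $\mathcal{F}$ around $I$ that can be isolated by a uniformly computable $\Sigma_3$ formula, and then to characterize orbits within $\mathcal{F}$. The neighbourhood I have in mind is the convex subordering consisting of the final bounded summand of $\A$ (either the block $k$ or the shuffle $Sh(S)$ with finite $S$), the interval $I$ itself, and the initial bounded summand of $\B$. Because these bookends are exactly the types appearing in the statement of Corollary \ref{cor:defBoundedSummand}, that corollary yields the desired $\Sigma_3$ definition of $\mathcal{F}$ directly.

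Inside $\mathcal{F}$ the interval $I$ is then definable as the set of elements not lying in any block of size at most $N$, where $N$ is the maximum finite block size appearing in the two bookends; this is $\Sigma_2$ via the predicates $S_n$ and $P_n$. Finally, within $I$ the orbit of a tuple is determined by finitely much of its quantifier-free $C_{\infty,\infty}$ diagram together with block-type data for each component, which we describe case by case: for $I = \zeta$ the pairwise $Adj_k$ relations suffice; for $I = \omega\cdot\eta$ or $\omega^*\cdot\eta$ one additionally records position-from-endpoint within each $\omega$ (resp.\ $\omega^*$) block via $P_n$ or $S_n$, together with the ordering of the blocks containing tuple elements (by density of $\eta$, this is the only cross-block invariant); for $I = Sh(\omega,\omega^*)$ one further records the block type of each element (``infinitely many successors'' versus ``infinitely many predecessors''); and for $I = \zeta\cdot\eta$ one combines $Adj_k$ with block-ordering alone. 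Each ingredient is $\Sigma_3$ in the pure language of linear orderings once the formula isolating $\mathcal{F}$ is available.

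The main obstacle will be to verify that every clause can be squeezed into $\Sigma_3$ rather than falling to $\Pi_3$ or $\Sigma_4$. A naive reading of ``$x$ lies in an infinite block of $I$'' is $\Pi_3$, but it can be recast as $\Sigma_3$ by conjoining with the prior $\Sigma_3$ formula defining $\mathcal{F}$ and then exploiting that outside the bookends the block sizes in $\mathcal{F}$ are uniformly bounded by $N$, exactly as in the $\chi_{n,S}$ construction of Proposition \ref{prop:omegaAnalysis}(3). Once this bookkeeping is carried out uniformly, the resulting formulas isolate each orbit in $I$ by a $\Sigma_3$ formula computed uniformly from the parameters $k$, $l$, $S$, $T$, completing item (4).
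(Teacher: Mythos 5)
Your handling of items (1)--(3) is correct and matches the paper: each of the five candidate types for $I$ has no first or last element, so Theorem \ref{thm:generalizedFree} applies directly on whichever side is needed.

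Item (4) has a genuine gap, and it is one the paper's own one-line proof (``follows from Corollary \ref{cor:defBoundedSummand} and Proposition \ref{prop:Delta3begin}'') shares. The hypothesis of item (4) permits the left bookend $P=Sh(S)$ with $S\subseteq\omega\cup\{\zeta,\omega,\omega^*\}$ finite, so $S$ may contain an infinite block type --- for instance $S=\{\omega,1\}$ with $I=\zeta$ and $\B=3$. Your step that isolates $I$ inside the focusing neighbourhood $P+I+Q$ as ``elements not lying in any block of size $\leq N$'' then fails outright: the elements of the $\omega$-blocks of $P$ also satisfy that $\Sigma_2$ condition, and your later remark that ``outside the bookends the block sizes in $\mathcal{F}$ are uniformly bounded by $N$'' does not address this, since the problematic infinite blocks live \emph{inside} the bookend. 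Worse, the conclusion of (4) is itself false in that configuration: the $3$-freeness argument of Proposition \ref{prop:infiniteAnalysis}(1)--(2), applied to the mirror image and unchanged by appending the block $3$ on the far side, produces a $3$-free tuple in the $\zeta$ (or $\zeta\cdot\eta$), so its orbit has no $\Sigma_3$ definition. Citing Proposition \ref{prop:Delta3begin} does not close the gap either, since none of its three clauses allows $\zeta$ blocks together with $\omega$ or $\omega^*$ blocks. The statement only becomes true under the adjacency restriction that the paper introduces afterwards in Proposition \ref{prop:infiniteAnalysis}(3) and Theorem \ref{spud3cat}(5), namely that $\zeta$ or $\zeta\cdot\eta$ must not abut a shuffle sum containing an infinite block; with that restriction your case-by-case orbit formulas inside $I$ are essentially on target, but isolating $I$ within $P+I+Q$ still requires tracking predecessor and successor counts to separate $I$ from whatever infinite blocks the bookends contribute, rather than relying on a block-size bound alone.
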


\begin{proof}
    The first three points follow immediately from Theorem \ref{thm:generalizedFree}.
    The last bullet point follows from Corollary \ref{cor:defBoundedSummand} and Proposition \ref{prop:Delta3begin}.
\end{proof}

It is worth noting explicitly that in this case, where there are no first or last elements of our purely infinite blocks, the only cases where the ordering is uniformly relatively $\Delta_3$ categorical are when $I$ is surrounded by "buffers" with finitely many finite blocks and we can approach the definition of the automorphism orbits using Corollary \ref{cor:defBoundedSummand} and Proposition \ref{prop:Delta3begin}.
More specifically, the orbits are always of the form $\varphi_{n_1,<}(x)\land \varphi_{n_2,>}\land \psi$ where $\psi,$ as in Proposition \ref{prop:Delta3begin}, describes a number of predecessors and successors.

The following table summarizes our results for $I=\zeta$.
It also applies to $I=\omega\cdot\eta$, $Sh(\omega,\omega^*)$, $\omega^*\cdot\eta$ or $\zeta\cdot\eta$, or in fact, and $I$ with no first or last element.

\begin{center}
\begin{tabular}{c|c|c|c|c}
     $Col+\zeta+row$& Case 3 & $\A'+Sh(ubd)$ & $\A'+Sh(bdd)$ & $\S'+k$\\
     \hline
   Case 3  & $\Delta_4$ & $\Delta_4$ & $\Delta_4$ & $\Delta_4$  \\   \hline
   $Sh(ubd)+\B'$  & $\Delta_4$ & $\Delta_4$ & $\Delta_4$ & $\Delta_4$ \\  \hline
   $Sh(bdd)+\B'$  & $\Delta_4$ & $\Delta_4$ & $\Delta_3$ & $\Delta_3$ 
   \\  \hline
   $k+\B'$  & $\Delta_4$ & $\Delta_4$ & $\Delta_3$ & $\Delta_3$
\end{tabular}
\end{center}

Note that the behavior of $\omega^*$ is ascertained by reversing all of the orderings in the claims and conclusions for $\omega$.
This means that we only need to understand the scenarios where multiple infinite blocks appear adjacent to each other to complete our analysis.

\begin{proposition}
    Let $I$ be a sum of $\omega$, $\omega^*$, $\zeta$, $\omega\cdot\eta$, $Sh(\omega,\omega^*)$, $\omega^*\cdot\eta$, or $\zeta\cdot\eta$ with at least two terms and no repeating infinite block types.
    Let $\L=\A+I+\B$.
        \begin{enumerate}
        \item If $\A$ or $\B$ is in Case 3, then $\L$ is not uniformly relatively $\Delta_3$ categorical.
        \item If $\A$ is in Case 1 so $\A=\A'+Sh(S)$ where $S\subseteq\omega$ is infinite then $\L$ is not uniformly relatively $\Delta_3$ categorical.
        \item If $\B$ is in Case 1* so $\B=\B'+Sh(S)$ where $S\subseteq\omega$ is infinite then $\L$ is not uniformly relatively $\Delta_3$ categorical.
        \item If $\A=\A'+P$ with $P=Sh(S)$ or $k$ with $S\subseteq\omega\cup\{\zeta, \omega, \omega^*\}$ finite and $\B=Q+\B'$ with $Q=Sh(T)$ or $l$ with $T\subseteq\omega\cup\{\zeta, \omega, \omega^*\}$ finite, then the tuples in $I$ have uniformly computable $\Sigma_3$ definable automorphism orbits if and only if $P+I+Q$ is uniformly relatively $\Delta_3$ categorical.
    \end{enumerate} 
\end{proposition}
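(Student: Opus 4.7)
The plan for items (1)--(3) is to adapt the freeness arguments from Proposition \ref{prop:omegaAnalysis}(1)--(2) and Theorem \ref{thm:generalizedFree}. In each case I will exhibit a $3$-free tuple within $I$ by exploiting arbitrarily large finite blocks on the appropriate side. I would case-split on whether the leftmost summand of $I$ has a first element (so $I$ begins with $\omega$, $\omega^*$, or $\zeta$) or lacks one (so $I$ begins with $\omega\cdot\eta$, $\omega^*\cdot\eta$, $\zeta\cdot\eta$, or $Sh(\omega,\omega^*)$). In the former case I pick a first-like element of the first block and argue as in Proposition \ref{prop:omegaAnalysis}(1)--(2); in the latter case I pick an initial element of one of the shuffled-in infinite blocks and argue as in Theorem \ref{thm:generalizedFree}. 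The arbitrarily large finite blocks to swap against come from the Case 3 tail of $\A$ for item (1) and from the infinite $Sh(S)$ summand for items (2) and (3); a symmetric story handles the right-hand side. Each resulting $\bar a\bar b \leq_2 \bar a'\bar b'$ check will reduce to standard $\leq_2$ inequalities covered by Theorem \ref{thm:2bnf}.

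For item (4) the plan is to reduce orbit questions in $\L$ to orbit questions in $P+I+Q$ via an orbit-equality observation. By Theorem \ref{thm:homogChar} and the $sp$-homogeneity of $\L$, each summand of $I$ contributes infinite block types that do not appear elsewhere in $\L$: a lone infinite block in $I$ must be the unique such block in $\L$, while densely arranged infinite blocks (as in $\omega\cdot\eta$ or $Sh(\omega,\omega^*)$) must constitute the entire shuffle sum of their type in $\L$. Hence every automorphism of $\L$ restricts to an automorphism of $I$, and every automorphism of $I$ extends by the identity both to $\L$ and to $P+I+Q$. Thus for any tuple $\bar a\in I$, the orbits of $\bar a$ in $\L$, in $P+I+Q$, and in $I$ itself all coincide.

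With this orbit-equality in hand, the ``if and only if'' reduces to a transfer of $\Sigma_3$-definability between $\L$ and $P+I+Q$ for orbits of tuples in $I$. For the backward direction, given a uniformly computable $\Sigma_3$ definition of such an orbit in $P+I+Q$, I would lift it to $\L$ by conjoining with the $\Sigma_3$ formula cutting out $P+I+Q$ inside $\L$ supplied by Corollary \ref{cor:defBoundedSummand}, and relativizing existential witnesses to this substructure; the resulting formula stays in $\Sigma_3$ because one only appends bounded relativization over a $\Sigma_3$-definable region. For the forward direction I argue by contrapositive: if a tuple $\bar a\in I$ is $3$-free in $P+I+Q$, I show $\bar a$ remains $3$-free in $\L$ by extending the freeness-witnessing pair $\bar a',\bar b'$ via the identity on $\A'$ and $\B'$, and verifying the required $\leq_2$ inequality again via Theorem \ref{thm:2bnf}.

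The hardest part will be verifying in item (4) that the back-and-forth inequalities genuinely transfer across the inclusion $P+I+Q\hookrightarrow \L$. This demands tracking the intervals flanking the chosen tuples when extending to $\L$ and confirming, across all combinations of $P$ and $Q$ being single blocks or shuffle sums with possibly infinite summands from $\{\omega,\omega^*,\zeta\}$, that the relevant $\leq_2$ comparisons are preserved. A secondary bookkeeping issue is confirming that the cut-down formula from Corollary \ref{cor:defBoundedSummand} composes cleanly with the internal orbit definitions to stay inside $\Sigma_3$ rather than drifting to $\Sigma_4$ through careless relativization.
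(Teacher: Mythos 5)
Your plan is substantially correct and follows the same overall strategy as the paper: items (1)--(3) by exhibiting a $3$-free tuple inside $I$ via the machinery of Proposition \ref{prop:omegaAnalysis} and Theorem \ref{thm:generalizedFree}, and item (4) by cutting down to $P+I+Q$ through Corollary \ref{cor:defBoundedSummand}. The paper's own proof of (1)--(3) is terser: assuming for contradiction that $\L$ is uniformly relatively $\Delta_3$ categorical with $\A$ in Case 3 (or Case 1 with $S$ infinite), Theorem \ref{thm:generalizedFree} forces $I$ to have a first element, which can only happen if $I$ starts with an $\omega$ block; the analysis then falls to the remaining summands (which have no first element) adjacent to $\A+\omega$. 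Your proposal re-derives the freeness arguments rather than citing them, which is more work but leads to the same place.

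One concrete error in your setup: you write that the leftmost summand of $I$ ``has a first element (so $I$ begins with $\omega$, $\omega^*$, or $\zeta$).'' That is not the correct dichotomy. Among the permitted summands, $\omega$ is the \emph{only} one with a first element; $\omega^*$ has a last element but no first, and $\zeta$ has neither. The useful case split is therefore: either $I$ has a first element (forcing $I_1 \cong \omega$, handled via Proposition \ref{prop:omegaAnalysis}(1)--(2) applied to $\A + \omega + (\text{rest})$), or $I$ has no first element (handled directly by Theorem \ref{thm:generalizedFree}(1)/(3)). Your split into ``single block vs.\ shuffled block'' cuts across this in a way that, if you genuinely picked a ``first-like'' element of an $\omega^*$ or $\zeta$ block and tried to mimic Proposition \ref{prop:omegaAnalysis}(1), the side-swap would not go through as stated because the element you chose is not distinguished by predecessor-count; you would in practice revert to the generalizedFree-style argument. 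So the conclusion is right, but the organizing case distinction needs correcting.

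For item (4), your elaboration is welcome and essentially fills in what the paper compresses to ``follows immediately from Corollary \ref{cor:defBoundedSummand}.'' The orbit-equality observation is sound: by Theorem \ref{thm:homogChar}, the block types used in $I$ cannot recur elsewhere (a lone block type is unique, and any block type in a shuffle summand of $I$ must have $I$ as its entire density interval), so automorphisms of $\L$ restrict to $I$ and automorphisms of $I$ extend by the identity. One small point worth making explicit in the forward direction: a $3$-free tuple in $P+I+Q$ is not a priori contained in $I$, so you need the observation that $\mathrm{Aut}(P+I+Q)$ decomposes as a product (the boundaries between $P$, $I$, $Q$ are setwise fixed, $P$ and $Q$ being finite-blocked and $I$ purely infinite) together with Proposition \ref{prop:Delta3begin} showing $P$ and $Q$ contribute no $3$-free tuples; this lets you locate the freeness inside $I$ before lifting it to $\L$.
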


    \begin{proof}
        Say that $\A$ is in Case 3 or that it is in Case 1 where $S\subseteq\omega$ is infinite.
        By Theorem \ref{thm:generalizedFree}, the last summand of $I$ must be $\omega^*$.
        This means that the next summands $K$ of $I$ must have no last element. 
        Using Theorem \ref{thm:generalizedFree} on $\A+K$ gives that $\L$ is not uniformly relatively $\Delta_3$ categorical.
        A symmetrical proof works for when $\B$ is in Case 3* or in Case 1* with $S\subseteq\omega$ infinite.
        
        This leaves only the possibility that we are in (4) as above.
        The result follows immediately from Corollary \ref{cor:defBoundedSummand}.
     \end{proof}

    What remains is understanding the categoricity of linear orderings of the form $P+I+Q$ as in the fourth case of the above proposition.

    \begin{proposition}\label{prop:infiniteAnalysis}
        \begin{enumerate}
            \item $\zeta+Sh(S)$ with $S\subseteq\omega\cup\{\omega,\omega^*\}$ containing at least one infinite block is not uniformly relatively $\Delta_3$ categorical.
            \item $\zeta\cdot\eta+Sh(S)$ with $S\subseteq\omega\cup\{\omega,\omega^*\}$ containing at least one infinite block is not uniformly relatively $\Delta_3$ categorical.
            \item Let $\L=P+I+Q$, where $P=Sh(S)$ or $k$ with $S\subseteq\omega\cup\{\zeta,\omega,\omega^*\}$ finite, $Q=Sh(T)$ or $l$ with $T\subseteq\omega\cup\{\zeta,\omega,\omega^*\}$ finite and $I$ containing only infinite blocks. As long as a block isomorphic to $\zeta$ or $\zeta\cdot\eta$ is not adjacent to a block isomorphic to $Sh(S)$ with $S\subseteq\omega\cup\{\omega,\omega^*\}$, it is uniformly relatively $\Delta_3$ categorical.
        \end{enumerate}
    \end{proposition}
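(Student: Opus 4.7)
The proof naturally breaks into two halves: parts (1) and (2) are non-categoricity results that I would handle by exhibiting a $3$-free element, following closely the pattern of Proposition \ref{prop:omegaAnalysis}(1)-(2) and Theorem \ref{thm:generalizedFree}; part (3) is a positive result where I would construct a uniformly computable $\Sigma_3$ Scott family by piecing together local definitions.

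For (1), fix $a$ in the $\zeta$ block of $\L = \zeta + Sh(S)$. Given any Spoiler tuple $\bar{b}$, split it as $\bar{b} = \bar{b}_1 \bar{b}_2$ with $\bar{b}_1 \subseteq \zeta$ and $\bar{b}_2 \subseteq Sh(S)$, assume WLOG that $a\bar{b}_1$ is a contiguous interval of size $m$ with $a$ in position $k$. Since $S$ contains an infinite block, pick (WLOG $\omega \in S$) an $\omega$-block $W \subset Sh(S)$ placed entirely below $\bar{b}_2$, and let $(a', \bar{b}_1')$ be the length-$m$ initial segment of $W$ with $a'$ in position $k$. Then $a'$ has only $k-1$ predecessors while $a$ has infinitely many, giving the required level-$3$ distinction. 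For $\bar{a}\bar{b} \leq_2 \bar{a}'\bar{b}'$ I would proceed interval-by-interval: the interval below $a\bar{b}_1$ is $\omega^*$ and below $a'\bar{b}_1'$ is $\zeta + Sh(S)_{<W}$; the interval above $a\bar{b}_1$ and the interval above $a'\bar{b}_1'$ differ only by a prefix of $\omega + Sh(S)_{\le \bar{b}_2}$ versus $(\text{shift}) + Sh(S)_{\le \bar{b}_2}$; all these pairs are $2$-equivalent by Theorem \ref{thm:2bnf}. The case $\omega^* \in S$ is dual, placing $W$ above $\bar{b}_2$.

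Part (2) runs the same playbook on $\L = \zeta \cdot \eta + Sh(S)$ with $a \in Z_0 \subseteq \zeta \cdot \eta$, the minor twist being that $\bar{b}$ may meet other $\zeta$-blocks of $\zeta \cdot \eta$. I exploit the self-similarity $\zeta \cdot \eta \cong \zeta \cdot \eta + \zeta \cdot \eta$ and the fact that intervals between distinct $\zeta$-blocks are again $\zeta \cdot \eta$ to reroute the $\bar{b}$-contributions of those other $\zeta$-blocks into appropriate initial/terminal sub-$\zeta \cdot \eta$'s, so that repositioning the $Z_0$-contribution to an $\omega$-block in $Sh(S)$ can be done while preserving all the cross-interval $2$-equivalences.

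For (3), the plan has two steps. First, $\Sigma_3$-define each summand (the endpieces $P, Q$ and each infinite summand $I_j$ in $I$) as a convex subordering of $\L$: $sp$-homogeneity forces the finite block-sizes appearing in $P$ and $Q$ to appear nowhere else in $\L$, so Proposition \ref{prop:Sigma3blockabove} and Corollary \ref{cor:defBoundedSummand} yield boundaries for the endpieces and for any internal $I_j$ bordered by a sum-summand with a unique finite block-size. Intermediate infinite summands are separated by their block-type signatures: $\omega$-blocks have $\Sigma_2$-bounded predecessor count, $\omega^*$-blocks bounded successor count, and the $\zeta$/$\zeta \cdot \eta$ summands are discriminated by a $\Sigma_3$ density witness for a $\zeta$-type element between two others. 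Second, within each definable piece, isolate orbits using the tools already developed: Proposition \ref{prop:Delta3begin} handles $P$ and $Q$, while the orbit definitions inside each $I_j$ come from adapting Proposition \ref{prop:omegaAnalysis}(3)-(4) and the corollaries thereafter. The hardest step is the boundary construction between consecutive infinite summands, and this is precisely where the adjacency hypothesis earns its keep: were a $\zeta$ or $\zeta \cdot \eta$ summand adjacent to an $Sh(U)$ containing an infinite block, the $\zeta$-density witness would conflate elements from the two sides, and in fact parts (1) and (2) show such configurations are genuinely not $\Delta_3$ relatively categorical. Once the boundaries are in place, assembling the local definitions into a global uniformly computable $\Sigma_3$ Scott family yields uniform relative $\Delta_3$ categoricity.
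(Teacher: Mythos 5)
Your overall strategy—$3$-freeness via Theorem \ref{thm:2bnf} for (1) and (2), and assembling a uniformly computable $\Sigma_3$ Scott family from local definitions for (3)—is exactly the paper's approach, but there are two concrete problems in your execution of the freeness arguments.

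In (1), placing $W$ \emph{above} $\bar{b}_2$ when $\omega^* \in S$ breaks the $\leq_0$ condition: $a$ lies below every element of $\bar{b}_2$, so $a'$ must also lie below $\bar{b}_2$, which forces $W$ to be placed below $\bar{b}_2$ and $\bar{b}_1'$ to be a \emph{final} segment of $W$. (The paper side-steps this by choosing $a'$ with at least $m$ successors \emph{and} at least $m$ predecessors inside $J$, which handles $\omega$, $\omega^*$, and $\zeta$ blocks uniformly.) Also, ``$2$-equivalent'' overstates Theorem \ref{thm:2bnf}, which gives only a one-sided $\geq_2$ claim—but that one-sided claim is what you actually need.

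In (2), the ``reroute the other $\zeta$-block contributions into initial/terminal sub-$\zeta\cdot\eta$'s via self-similarity'' step does not work. Once the piece containing $a$ is moved to a block $J$ in $Sh(S)$, any Spoiler piece coming from a $\zeta$-block above $Z_0$ must be relocated to a position \emph{above} $a'$ and \emph{below} $\bar{b}_2$, i.e., into $Sh(S)$ between $J$ and $\bar{b}_2$; it cannot remain inside $\zeta\cdot\eta$, because $\zeta\cdot\eta$ lies entirely below $a'$. The paper relocates \emph{all} of the $\bar{b}_1$ pieces into infinite blocks of $Sh(S)$, and then the key $\leq_2$ checks are between pairs like $\omega + \zeta\cdot\eta + \omega^*$ and $K + Sh(S) + K'$, which go through by Theorem \ref{thm:2bnf}. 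Without moving those upper pieces into $Sh(S)$, the ordering of $\bar{a}'\bar{b}'$ is not even a valid response at level $0$.

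For (3) your plan is sound but less tightly scoped than the paper's. The paper first observes that since $\L = P+I+Q$ has bounded finite block sizes, Proposition \ref{prop:Delta3begin} already handles all of $P$, $Q$, $\omega$, and $\omega^*$ via parameter-free $\Sigma_3$ predecessor/successor-counting formulas; the only remaining concern is the $\zeta$ and $\zeta\cdot\eta$ summands. It then $\Sigma_3$-defines $\A + Z + \B$ for the local neighborhood $\A, \B$ of $Z$ (which by the adjacency hypothesis contains no shuffle with infinite blocks), and finishes with $S_N(x)$ for large $N$. Your version, which tries to delimit \emph{every} summand with ``density witnesses'' and ``block-type signatures,'' would likely work with care, but you should be aware that the paper's narrower focus is what makes the computably uniform bookkeeping manageable.
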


    \begin{proof}
    We begin with (1).
    Fix an element $a$ lying in $\zeta$.
    We claim that $a$ is $3$-free.
    Let $\bar{b}$ be the tuple selected by the Spoiler.
    Write $\bar{b}=\bar{b}_1\bar{b}_2$ where $\bar{b}_1$ contains elements in $\zeta$ and $\bar{b}_2$ contains elements in $Sh(S)$.
    Without loss of generality, $\bar{b}_1$ is a finite interval of size $m$ including $a$.
    Say that $a$ is the $k^{th}$ element of $\bar{b}_1$.
    Let $a'$ have at least $m$ successors and predecessors and be an element of an infinite block, $J$, in $Sh(S)$ below all elements of $\bar{b}_2$.
    Let $\bar{b}'=\bar{b}'_1\bar{b}_2$ where $\bar{b}'_1$ is a block of size $m$ with $a'$ in the $k^{th}$ position within $J$.
    It is immediate that $\bar{a}\not\geq_3\bar{a}'$ as the two elements have a different number of successors or predecessors (which one depends on whether $J$ is an $\omega$ or $\omega^*$ block). 
    We claim that $\bar{a}\bar{b}\leq_2\bar{a}'\bar{b}'$.
    Checking interval by interval, the only non-isomorphic corresponding intervals are the ones above and below $\bar{b}_1$ and $\bar{b}'_1$.
    Write the interval above $\bar{b}_1$ as $\omega+Sh(S)$.
    The interval above $\bar{b}'_1$ is then isomorphic to $K+Sh(S)$ where $K$ is some (potentially empty) block with a first element.
    The fact that
    \[K+Sh(S) \geq_2 \omega+Sh(S)\]
    follows from Theorem \ref{thm:2bnf}.
    The interval below $\bar{b}_1$ is isomorphic to $\omega^*$.
    The interval below $\bar{b}'_1$ is isomorphic to $\zeta+Sh(S)+K$, where $K$ is some (potentially empty) block with a last element.
    The fact that
    \[\zeta+Sh(S)+K \geq_2\omega^*\]
    follows from Theorem \ref{thm:2bnf}.

    We now check (2), which is quite similar to (1).
    Fix an element $a$ lying in $\zeta\cdot\eta$.
    We claim that $a$ is $3$-free.
    Let $\bar{b}$ be the tuple selected by the Spoiler.
    Write $\bar{b}=\bar{b}_1\bar{b}_2$ where $\bar{b}_1$ contains elements in $\zeta\cdot\eta$ and $\bar{b}_2$ contains elements in $Sh(S)$.
    Without loss of generality, $\bar{b}_1$ is a finite sequence of finite intervals of size $m$ including $a$.
    Say that $a$ is the $k^{th}$ element of its interval within $\bar{b}_1$.
    Let $a'$ have at least $m$ successors and predecessors and be an element of an infinite block, $J$, in $Sh(S)$ below all of the elements of $\bar{b}_2$.
    Let $\bar{b}'=\bar{b}'_1\bar{b}_2$ where $\bar{b}'_1$ is a sequence of blocks of size $m$ where the block corresponding to the block with $a$ in it has $a'$ in it in the $k^{th}$ position within $J$.
    It is immediate that $\bar{a}\not\geq_3\bar{a}'$ as the two elements have a different number of successors or predecessors (which one depends on whether $J$ is an $\omega$ or $\omega^*$ block). 
    We claim that $\bar{a}\bar{b}\leq_2\bar{a}'\bar{b}'$.
    Checking interval by interval, the only non-isomorphic corresponding intervals are the ones between, above, and below the blocks of $\bar{b}_1$ and $\bar{b}'_1$.
    Write the interval above $\bar{b}_1$ as $\omega+\zeta\cdot\eta+Sh(S)$.
    The interval above $\bar{b}'_1$ is then isomorphic to $K+Sh(S)$ where $K$ is some (potentially empty) block with a first element.
    The fact that
    \[K+Sh(S) \geq_2 \omega+\zeta\cdot\eta+Sh(S)\]
    follows from Theorem \ref{thm:2bnf}.
    The interval below $\bar{b}_1$ is isomorphic to $\zeta\cdot\eta+\omega^*$.
    The interval below $\bar{b}'_1$ is isomorphic to $\zeta\cdot\eta+Sh(S)+K$ where $K$ is some (potentially empty) block with a last element.
    The fact that
    \[\zeta\cdot\eta+Sh(S)+K \geq_2\zeta\cdot\eta+\omega^*\]
    follows from Theorem \ref{thm:2bnf}.
    The intervals in between the blocks $\bar{b}_1$ are isomorphic to $\omega+\zeta\cdot\eta+\omega^*$.
    The intervals in between the blocks $\bar{b}'_1$ are isomorphic to $K+Sh(S)+K'$, where $K$ is some (potentially empty) block with a first element and $K'$ is some (potentially empty) block with a least element.
    The fact that 
    \[K+Sh(S)+K' \geq_2\omega+\zeta\cdot\eta+\omega^*\]
    follows from Theorem \ref{thm:2bnf}.

    We lastly analyze (3).
    Note that a linear ordering $\L=P+I+Q$ only has bounded finite block sizes.
    Therefore, by Proposition \ref{prop:Delta3begin}, there must be a $\zeta$ block and an $\omega$ or $\omega^*$ block in $\L$ for there to be an issue with uniform relative $\Delta_3$ categoricity.
    Furthermore, the definition of the orbits for the elements in $\omega$ given in  Proposition \ref{prop:Delta3begin}, $P_k(x)\land S_N(x)\land \lnot P_{k+1}(x)$, and their dual definitions for the elements in $\omega^*$, $S_k(x)\land P_N(x)\land \lnot S_{k+1}(x)$, still define their orbits in $\L$.
    In other words, the only potential issue is that the orbits in the $\zeta$ block(s) are not definable in the desired manner.
    By assumption, we are in the case where $Z=\zeta$ or $\zeta\cdot\eta$ locally appear with $\A+Z+\B$ and $\A,\B$ are equal to $n,\omega,\omega^*$, or $Sh(S)$ with $S\subseteq \omega$ finite.
    We claim that, given this, $Z$ is computably $\Sigma_3$ definable, giving the desired result by relativizing the definitions from Proposition \ref{prop:Delta3begin} to this definition.
    In the case that $\A$ has only finite blocks, the elements to the right of $\A$ (including $\A$ itself) can be defined as being to the right of blocks of size $\A$ as in Corollary \ref{cor:defBoundedSummand}.
    In the case that $\A$ is a single infinite block, elements to the right of $\A$ (not including $\A$ itself) are infinitely to the right of an element satisfying the defining formula of an automorphism orbit in $\A$ (this can be said in a $\Sigma_3$ manner).
    Proceeding dually with $\B$ yields a computable $\Sigma_3$ definition for $\A+Z+\B$ where $\A$ and $\B$ are either empty or only have finite blocks.
    The elements in $Z$ are now simply defined by $S_N(x)$ for sufficiently large $N$.
    \end{proof}

    Putting this all together yields the following theorem.

    \begin{theorem} \label{spud3cat}
        An $sp$-homogeneous linear ordering $\L$ is uniformly relatively $\Delta_3$ categorical if and only if: 
        \begin{enumerate}
            \item It has no intervals of the form $Sh(S)$ where $S$ includes an infinite block and finite blocks of arbitrary size or $\zeta$ along with another finite block.
            \item If an interval $I$ is of the form $\zeta$, $\omega\cdot\eta$, $Sh(\omega,\omega^*)$, $\omega^*\cdot\eta$ or $\zeta\cdot\eta$ or a sum of at least two of $\omega$, $\omega^*$, $\zeta$, $\omega\cdot\eta$, $Sh(\omega,\omega^*)$, $\omega^*\cdot\eta$ or $\zeta\cdot\eta$, then $I$ has an interval to its left and right in $\L$ that only has finite blocks of a bounded size.
            \item Any single $\omega$ block has an interval to its left with only finite blocks of bounded size or an interval to its left and right with finite blocks of bounded size.
            \item Any single $\omega^*$ block has an interval to its right with only finite blocks of bounded size or an interval to its left and right with finite blocks of bounded size.
            \item Any $\zeta$ or $\zeta\cdot\eta$ does not have an interval to its right or left isomorphic to a shuffle sum including an infinite block.
        \end{enumerate}
    \end{theorem}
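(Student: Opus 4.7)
The plan is to split the theorem into its necessity and sufficiency directions, and to observe that by this point in the paper nearly every pairing of ``local configuration around an infinite block'' with ``uniform $\Sigma_3$ definability of the resulting orbits'' has already been established. The main work remaining is organizational: we must verify that clauses (1)--(5) jointly cover the local picture around every infinite block in an sp-homogeneous ordering, and that when they all hold the various pointwise definitions can be glued into a single uniformly computable Scott family of $\Sigma_3$ formulas for $\L$.

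For the necessity direction I would argue contrapositively, exhibiting in each case a $3$-free tuple somewhere in $\L$. If clause (1) fails, a shuffle sum of the form listed in Proposition \ref{prop:forbiddenShuffles} appears as an interval of $\L$, and that proposition directly produces the required $3$-free element. If clause (2) fails, Theorem \ref{thm:generalizedFree} (applied on whichever side of $I$ lacks a bounded buffer), together with the adjacency analysis immediately preceding Proposition \ref{prop:infiniteAnalysis}, yields a $3$-free tuple inside $I$. Clauses (3) and (4) correspond exactly to the $\Delta_4$-labelled entries of the table following Proposition \ref{prop:omegaAnalysis} and its dual for $\omega^*$. Clause (5) is immediate from Proposition \ref{prop:infiniteAnalysis}(1,2). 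In every case the witness is constructed inside the offending local interval, so extending by the rest of $\L$ on either side does not affect the back-and-forth computation that certifies $3$-freeness.

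For the sufficiency direction I would decompose $\L$ via Theorem \ref{thm:homogChar} into shuffle-sum pieces and singleton blocks. Clauses (1), (2), (5) and the bounded-buffer hypotheses of (3) and (4) jointly guarantee that every maximal interval $J$ of $\L$ containing an infinite block is sandwiched between ``bounded buffers'' of the form $k \in \omega$, $Sh(S)$ with $S$ a finite subset of $\omega \cup \{\omega,\omega^*,\zeta\}$, or $Sh(S)$ with $S$ containing a single infinite block alongside finitely many finite ones. For each such $J$, Corollary \ref{cor:defBoundedSummand} supplies a uniformly computable $\Sigma_3$ formula cutting out $\text{buffer}+J+\text{buffer}$ inside $\L$, and Propositions \ref{prop:Delta3begin}, \ref{prop:omegaAnalysis}(3,4) and \ref{prop:infiniteAnalysis}(3) then provide the internal orbit definitions, relativised to this localising formula. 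For elements not lying in any such $J$ -- shuffle sums with only finite blocks, shuffle sums with a single infinite block sitting alongside bounded finite blocks, and singleton finite blocks -- Proposition \ref{prop:Delta3begin} delivers the orbit definitions directly in $\L$. The union of all these formulas is the desired uniform Scott family.

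The main obstacle I expect is verifying the gluing carefully: each local definability proposition was stated for an isolated interval of the form $\A+I+\B$, so one must check that the $\Sigma_3$ formulas produced inside adjacent intervals do not inadvertently define elements across their common boundary. This reduces to checking that the delimiter formulas $\varphi_{n,<}$ and $\varphi_{n,>}$ of Proposition \ref{prop:Sigma3blockabove}, together with the analogous flanking formulas used in Proposition \ref{prop:infiniteAnalysis}(3), can always be chosen disjointly across components once clauses (1)--(5) hold; a short case distinction on the three possible presentations of each buffer (singleton block, bounded $Sh(S)$ with $S \subseteq \omega$, or $Sh(S)$ with one infinite block and bounded finite blocks) will close this gap. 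Once the Scott family is assembled, the passage from a uniformly computable $\Sigma_3$ Scott family to uniform relative $\Delta_3$ categoricity is standard via the Ash--Knight machinery already invoked for Proposition \ref{prop:Delta3begin}.
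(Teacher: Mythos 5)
Your proposal is correct and follows essentially the same route as the paper's own proof, which is a short assembly argument pointing to Proposition \ref{prop:forbiddenShuffles}, Theorem \ref{thm:generalizedFree}, Proposition \ref{prop:omegaAnalysis}, Proposition \ref{prop:infiniteAnalysis}, Corollary \ref{cor:defBoundedSummand}, and Proposition \ref{prop:Delta3begin}. One small clarification on your "gluing" concern: it is already discharged by sp-homogeneity via Theorem \ref{thm:homogChar}, since block types cannot repeat across distinct shuffle-sum components, so the delimiter formulas $\varphi_{n,<}$ and $\varphi_{n,>}$ keyed to those block sizes automatically isolate the correct piece without further case analysis.
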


    \begin{proof}
        The results of this section, taken together, prove that the elements in infinite blocks of $\L$ with each of the five properties listed above have uniformly computable $\Sigma_3$ automorphism orbits without parameters.
        Combining these definitions with those for finite blocks provided in Proposition \ref{prop:Delta3begin} gives a uniformly computable $\Sigma_3$ Scott set for $\L$ without parameters.
        In particular, $\L$ is uniformly relatively $\Delta_3$ categorical as desired.

        The results of this section also provide arguments showing that in each of the above scenarios, $\L$ has a $3$-free tuple.
        In particular, $\L$ cannot have a $\Sigma_3$ Scott set without parameters, so $\L$ is not uniformly relatively $\Delta_3$ categorical, as desired.
    \end{proof}

This is a characterization of the uniformly relatively $\Delta_3$, $sp$-homogeneous linear orderings.
We use this to also characterize the relatively $\Delta_3$, $sp$-homogeneous linear orderings.

\begin{proposition}
    An $sp$-homogeneous linear ordering $\L$ is relatively $\Delta_3$ categorical if and only if it can be written $\L=L_0+1+L_1+1+\cdots+1+L_n$ where each $L_i$ is uniformly relatively $\Delta_3$ categorical.
\end{proposition}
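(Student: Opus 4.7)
The plan is to reduce the proposition to the classical correspondence between relative $\Delta_3$ categoricity and uniform relative $\Delta_3$ categoricity modulo finite parameters, namely Theorem VIII.4 of \cite{cst1}. That theorem says a structure $\A$ is relatively $\Delta_\alpha$ categorical if and only if there is a finite tuple $\bar p$ from $\A$ such that $(\A,\bar p)$ is uniformly relatively $\Delta_\alpha$ categorical. Once this is invoked, the substance reduces to the observation that in a linear order finite parameters cut the order into finitely many intervals, which is precisely the decomposition asserted in the statement.

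For the backward direction, suppose $\L=L_0+1+L_1+\cdots+1+L_n$ with each $L_i$ uniformly relatively $\Delta_3$ categorical, and let $\bar p$ name the $n$ separator singletons in increasing order. Given any copy $\L'$ of $\L$, fix $\bar p'\in\L'$ corresponding to $\bar p$; these exist abstractly because $\L'\cong\L$. The intervals of $\L'$ between consecutive elements of $\bar p'$ are copies of $L_0,L_1,\ldots,L_n$ in order. Uniform relative $\Delta_3$ categoricity of each $L_i$ produces an isomorphism between the $i$-th interval of $\L$ and the $i$-th interval of $\L'$ computable from $\Delta_3$ of these pieces. Gluing these along $\bar p\mapsto\bar p'$ yields an isomorphism $\L\to\L'$ that is uniformly $\Delta_3$-computable from the pair $(\L,\bar p),(\L',\bar p')$, so $(\L,\bar p)$ is uniformly relatively $\Delta_3$ categorical, and Theorem VIII.4 of \cite{cst1} then delivers the relative $\Delta_3$ categoricity of $\L$ itself.

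For the forward direction, suppose $\L$ is relatively $\Delta_3$ categorical. Theorem VIII.4 of \cite{cst1} supplies a finite tuple $\bar p=p_1<\cdots<p_n$ such that $(\L,\bar p)$ is uniformly relatively $\Delta_3$ categorical. Let $L_i$ be the open interval between $p_i$ and $p_{i+1}$ (with $L_0$ below $p_1$ and $L_n$ above $p_n$), so that $\L=L_0+\{p_1\}+L_1+\cdots+\{p_n\}+L_n$, which is the desired decomposition. It remains to verify each $L_i$ is itself uniformly relatively $\Delta_3$ categorical. The key observation is that the orbit of a tuple $\bar a\in L_i$ inside the pure linear order $L_i$ coincides with the orbit of $\bar a$ in $\L$ over $\bar p$: any automorphism of $L_i$ extends by the identity outside $L_i$ to an automorphism of $\L$ fixing $\bar p$, and conversely any automorphism of $\L$ fixing $\bar p$ preserves each $L_i$ setwise. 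Thus the uniform $\Sigma_3$ Scott family for $(\L,\bar p)$ restricts to a uniform $\Sigma_3$ Scott family for $L_i$ after the parameters $\bar p$ are replaced by descriptions of the endpoints of the ambient interval.

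The main technical obstacle is the formula-restriction step in the forward direction: one must translate $\Sigma_3$ formulas in $\L$ that mention $\bar p$ and quantify over all of $\L$ into parameter-free $\Sigma_3$ formulas internal to $L_i$. The trick, which is essentially the content of Theorem VIII.4 of \cite{cst1} in this setting, is that for a formula defining an automorphism orbit any external quantifier witness can be relocated inside $L_i$ by a suitable automorphism fixing $\bar p$; hence restricting quantifiers to $L_i$ is truth-preserving on tuples from $L_i$ and does not raise quantifier complexity.
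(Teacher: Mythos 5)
Your backward direction is essentially the paper's and is fine. The problem is the forward direction, where you have a genuine gap disguised as a "technical obstacle" that you then claim to resolve.

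After invoking Theorem VIII.4 to obtain $\bar p$ with $(\L,\bar p)$ uniformly relatively $\Delta_3$ categorical, you try to extract a parameter-free $\Sigma_3$ Scott family for each interval $L_i$ by restricting the ambient Scott family to $L_i$. Your justification is that ``any external quantifier witness can be relocated inside $L_i$ by a suitable automorphism fixing $\bar p$.'' This is false: an automorphism of $(\L,\bar p)$ fixes every $p_j$ and therefore preserves each interval $L_j$ \emph{setwise}, so it can never move a witness from $L_j$ ($j\neq i$) into $L_i$. A $\Sigma_3$ formula $\varphi(\bar x,\bar p)$ defining an orbit can genuinely depend on information external to $L_i$ (for instance, it can name $\bar p$ directly, or quantify over elements and blocks in other intervals), and there is no purely logical reason that relativizing its quantifiers to $L_i$ preserves truth or even makes the formula meaningful once the parameters are discarded. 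Nothing in Theorem VIII.4 of \cite{cst1} addresses this kind of quantifier relativization; that theorem is only the move you already used correctly at the beginning of the paragraph.

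The paper avoids this entirely by arguing in the contrapositive using the apparatus it built in the preceding section. If some $L_i$ were \emph{not} uniformly relatively $\Delta_3$ categorical, then by the analysis leading to Theorem \ref{spud3cat} (which is specific to $sp$-homogeneous linear orderings, not a general logical fact), $L_i$ contains a $3$-free tuple. One then checks that this tuple remains $3$-free over $\bar p$ in $(\L,\bar p)$, since the $\leq_2$ comparisons and the failure of $\leq_3$ that witness freeness can be carried out interval-by-interval while the rest of the parameter tuple is held fixed. By the cited fact that a structure containing an $\alpha$-free tuple is not uniformly relatively $\Delta_\alpha$ categorical, this contradicts the choice of $\bar p$. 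That argument uses the hard structural input from the earlier sections as an essential ingredient; a general formula-restriction argument of the kind you sketch does not exist, and you would need to replace the last paragraph of your forward direction with something along these lines.
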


\begin{proof}
    Say that $\L$ can be written in such a manner.
    It is then the case that $(\L,\bar{a})$ is uniformly relatively $\Delta_3$ categorical, where the $\bar{a}$ consists of the named "1"s in the above form.
    By Theorem VIII.4 of \cite{cst1}, this means that $\L$ is relatively $\Delta_3$ categorical.

    Say that $\L$ is relatively $\Delta_3$ categorical.
    By Theorem VIII.4 of \cite{cst1}, this means that for some $\bar{a}\in\L$,  $(\L,\bar{a})$ is uniformly relatively categorical.
    Let $\bar{a}=a_1<\cdots<a_k$ and let $a_0=-\infty$ and $a_k=\infty$.
    Note that $\L=(a_0,a_1)+\sum_{1\leq i\leq k}1+(a_i,a_{i+1})$.
    We claim that each $(a_i,a_{i+1})$ is uniformly relatively $\Delta_\alpha$ categorical.
    Say that some $(a_i,a_{i+1})$ was not.
    By the analysis in the above section, this means that $(a_i,a_{i+1})$ has some 3-free tuple.
    This tuple is still 3-free in $(\L,\bar{a})$ over $\bar{a}$.
    Therefore, $\L$ cannot be relatively $\Delta_3$ categorical.
\end{proof}

This yields the following.

 \begin{theorem} \label{spd3}
        A (weakly) $sp$-homogeneous linear ordering $\L$ is relatively $\Delta_3$ categorical if and only if: 
        \begin{enumerate}
            \item It has no intervals of the form $Sh(S)$ where $S$ includes an infinite block and finite blocks of arbitrary size or $\zeta$ along with another finite block.
            \item If $I$ is $\omega\cdot\eta$, $Sh(\omega,\omega^*)$, $\omega^*\cdot\eta$ or $\zeta\cdot\eta$ or a sum of at least two of those orderings, it has an interval to its left and right in $\L$ that only has finite blocks of a bounded size.
            \item Any $\zeta\cdot\eta$ does not have an interval to its right or left isomorphic to a shuffle sum including an infinite block.
        \end{enumerate}
    \end{theorem}

    \begin{proof}
        This is identical to the characterization for uniformly relatively $\Delta_3$ categorical except for the fact that there are no longer restrictions on the appearance of $\omega$, $\omega^*$, or $\zeta$ blocks.
        For $\omega$ blocks, one can take a parameter as the first element.
        In the resulting partition, the $\omega$ block is the first block, so it has an acceptable form.
        For $\omega^*$ blocks, the dual approach works.
        For $\zeta$ blocks, one can take a parameter in the middle of the block.
        In the resulting partition, the left side has a final $\omega^*$ and the right side an initial $\omega$.
        Both the initial $\omega$ (or final $\omega^*$) and its complement are definable by a computable $\Sigma_3$ formula (by counting the number of successors/predecessors), so it does not affect the categoricity of the resulting parts.

        This means that all we need to show is that the listed restrictions still apply even over parameters.
        For (1), every partition of such a $Sh(S)$ still has an interval with a $Sh(S)$ convexly embedding into it, so the resulting partition cannot be into uniformly relatively $\Delta_3$ categorical pieces.
        Similar logic shows that (2) holds.
        For example (all other cases are argued analogously), if $\omega\cdot\eta$ is to the right of an interval where there are arbitrarily large finite blocks to the left of every point, any partition still gives a part with $\omega\cdot\eta$ adjacent to an interval where there are arbitrarily large finite blocks to the left of every point.
        Therefore, this part cannot be uniformly relatively $\Delta_3$ categorical.
        Lastly, for (3), just as above, any partition of an ordering with an adjacent $\zeta\cdot\eta$ and shuffle sum including an infinite block still has this adjacency, so it cannot be uniformly relatively $\Delta_3$ categorical.

        This result works equally well for weakly $sp$-homogeneous linear orderings.
        This is achieved by adding the exceptional parameter set to the set of parameters in the case that $\L$ is relatively $\Delta_3$ categorical, and noting the existence of an unavoidable problematic interval, just as above, in the case that $\L$ is not relatively $\Delta_3$ categorical.
    \end{proof}

\subsection{Relative computable categoricity of $sp$-linear orderings}

In this section, we explore $sp$-homogeneous orderings, as structures of the form $(L,<,s,p)$.
Our goal is to classify when $sp$-homogeneous linear orderings are (uniformly) relatively computably categorical as $sp$-structures.
This is closely related to our endeavor in the previous section.
We see this clearly in the following lemma.

\begin{lemma}\label{lem:ConnectingSpTo<}
    Fix an $sp$-homogeneous linear ordering $\L$.
    If $(\L,<,s,p)$ is relatively computably categorical then $(\L,<)$ is relatively $\Delta_3^0$ categorical.
    If $(\L,<,s,p)$ is uniformly relatively computably categorical then $(\L,<)$ is uniformly relatively $\Delta_3^0$ categorical.
\end{lemma}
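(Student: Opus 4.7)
The plan is to leverage the fact that the successor and predecessor functions are computable at a fixed level of the jump hierarchy above the bare linear ordering, regardless of the particular structure. By Proposition~\ref{block1}, the successor relation $\{\langle x,y\rangle : y = \mathrm{succ}(x)\}$ is $\Pi^0_1$ in any linear ordering, and the predicate ``$x$ has a successor'' is $\Sigma^0_2$; the same is true of the predecessor relation. Consequently, the total functions $s$ and $p$ (which return $x$ when no successor/predecessor exists) are computable uniformly from $L''$ whenever $L$ is presented as a pure linear ordering.

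Now suppose $(\L, <, s, p)$ is relatively computably categorical and let $\L_1, \L_2$ be two copies of $\L$ as pure linear orderings. By the observation above, the expanded copies $(\L_i, <, s_i, p_i)$ can be computed uniformly from $\L_i''$. By relative computable categoricity of the $sp$-structure, there exists an isomorphism $f \colon \L_1 \to \L_2$ computable from $(\L_1, s_1, p_1) \oplus (\L_2, s_2, p_2)$, hence computable from $\L_1'' \oplus \L_2''$. Since $f$ is still an isomorphism of the underlying linear orderings $(\L_1, <)$ and $(\L_2, <)$, we conclude that $(\L, <)$ is relatively $\Delta^0_3$ categorical.

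For the uniform statement, I would simply observe that every step of the argument is uniform. Uniform relative computable categoricity of $(\L,<,s,p)$ provides a single Turing functional $\Phi$ producing the isomorphism from the $sp$-expansions of any two copies. The reduction that computes $(s_i, p_i)$ from $\L_i''$ is also a single Turing functional, independent of the presentation. Composing these two uniform procedures yields a single Turing functional that, applied to $\L_1'' \oplus \L_2''$, produces an isomorphism between $(\L_1,<)$ and $(\L_2,<)$, which is exactly uniform relative $\Delta^0_3$ categoricity.

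There is no genuine obstacle here; the lemma is essentially an unpacking of the definitions combined with the complexity bounds already established in Proposition~\ref{block1}. The only thing to be careful about is making sure the $s$ and $p$ functions, as \emph{total} functions (with the convention $s(x)=x$ when $x$ has no successor), really are computable from $L''$: deciding whether $x$ has a successor is a $\Sigma^0_2$ question answerable by $L''$, and in the affirmative case the unique witness can be found by a search using the $\Pi^0_1$ characterization of $\mathrm{succ}$.
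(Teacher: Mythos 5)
Your proof is correct, but it takes a genuinely different route from the paper's. The paper works syntactically with the Scott family characterization: it starts from a computable $\Sigma_1$ Scott family for $(\L,<,s,p)$ and rewrites each atomic formula involving $s$ and $p$ by a $\Sigma_3$ formula over $<$ alone when the atomic occurs positively, and by a $\Pi_3$ formula when it occurs negatively, so that the translated formula remains (finitary) $\Sigma_3$; this produces a computable $\Sigma_3$ Scott family for $(\L,<)$, with the parameterless version giving the uniform statement. You instead argue purely computationally: $s$ and $p$ are uniformly computable from $L''$ by Proposition~\ref{block1}, so the atomic diagrams of $(\L_i,<,s_i,p_i)$ are uniformly computable from $\L_i''$, and composing with the functional furnished by relative computable categoricity of the $sp$-structure gives an isomorphism computable from $\L_1'' \oplus \L_2'' \leq_T (\L_1 \oplus \L_2)''$, which is exactly relative $\Delta^0_3$ categoricity. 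Both rely on the same complexity bound for $s$ and $p$, but your functional argument sidesteps the bookkeeping of formula polarity and normal forms that the paper has to carry out, while the paper's syntactic argument produces an explicit $\Sigma_3$ Scott family (useful elsewhere in their development, e.g.\ for exhibiting free tuples). Your uniformity claim is also correct: each step of your chain of reductions is given by a fixed Turing functional independent of the presentations, so a single functional witnesses uniform relative $\Delta^0_3$ categoricity of $(\L,<)$ whenever $(\L,<,s,p)$ is uniformly relatively computably categorical.
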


\begin{proof}
    Say that $(\L,<,s,p)$ is relatively computably categorical.
    This means that there is a $\Sigma_1$ Scott family in the language $<,s,p$ over some parameters $\bar{p}$.
    Let $\psi(\bar{x},\bar{p})$ be a formula in the Scott family. 
    As previously observed, the functions $s$ and $p$ are computable in two jumps of the structure $(\L,<)$.
    This means that the validity of any atomic formula of the form 
    \[s^{n_1}\circ p^{m_1}\circ\cdots\circ s^{n_r}\circ p^{m_r}(x)=s^{a_1}\circ p^{b_1}\circ\cdots\circ s^{a_k}\circ p^{b_k}(x)\]
    or
    \[s^{n_1}\circ p^{m_1}\circ\cdots\circ s^{n_r}\circ p^{m_r}(x)<s^{a_1}\circ p^{b_1}\circ\cdots\circ s^{a_k}\circ p^{b_k}(x)\]
    can also be calculated in two jumps of the structure $(\L,<)$.
    Thus, there is a $\Delta_3(<)$ way to express all atomic formulas that include $s$ or $p$.
    Define $\psi'(\bar{x},\bar{p})$ by replacing all instances of the above formulas with their equivalent definitions in $<$ using the $\Sigma_3$ definition when the atomic formula appears positively and the $\Pi_3$ definition when the atomic formula appears negatively.
    Because $\psi$ is $\Sigma_1$ this gives that $\psi'$ is $\Sigma_3$.
    Furthermore, by construction, $\psi'$ contains no instances of $p$ and $s$, and it is equivalent to $\psi$.
    This yields the desired $\Sigma_3$ Scott family for $(\L,<)$.

    To obtain the result for uniform computable categoricity, repeat the above argument without the use of parameters; it proceeds exactly the same way.
\end{proof}

We now analyze the extent to which the converse of the above lemma holds.
To be explicit, the converse does not always hold.
For the sake of illustration, consider the linear ordering $\L=\omega+Sh(\omega)$.
We know by Theorem \ref{spud3cat} that $\L$ is uniformly relatively $\Delta_3$ categorical as a pure linear ordering.
That said, it is apparently difficult to give a parameterless Scott family for this ordering.
Consider, for example, the first element.
The natural definition of its orbit would be the one that states it is the first element.
That said, this is a $\Pi_1$ formula, and it is unclear that adding definitions for $s$ and $p$ should allow for a $\Sigma_1$ alternative.
Indeed, our next lemma shows that there is no $\Sigma_1$ formula in this situation and related ones.

\begin{lemma}\label{lem:forbiddensp}
    Let $\L$ be an $sp-$homogeneous linear ordering where $\L=\A+\omega+\B$ and the written $\omega$ is the unique $\omega$ block in the ordering.
    If $\B$ has arbitrarily large finite blocks in every initial segment (i.e., it is in Case $3^*$ or begins with a shuffle sum containing infinitely many finite blocks), then $(\L,<,s,p)$ is not uniformly relatively computably categorical.

    Similarly, let $\L=\A+\omega^*+\B$, and the written $\omega^*$ is the unique $\omega$ block in the ordering.
    If $\A$ has arbitrarily large finite blocks in every final segment (i.e., it is in Case $3$ or ends with a shuffle sum containing infinitely many finite blocks), then $(\L,<,s,p)$ is not uniformly relatively computably categorical.
\end{lemma}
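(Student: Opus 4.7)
The plan is to show that the first element of the unique $\omega$ block is $1$-free in the $(<,s,p)$-language. By the connection between $\alpha$-freeness and $\Sigma_\alpha$-definability of automorphism orbits, this will show that the orbit of this element cannot be defined by any $\Sigma_1$ formula in $(<,s,p)$, so $(\L,<,s,p)$ has no parameterless $\Sigma_1^c$ Scott family. This precludes uniform relative computable categoricity.

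Let $a$ be the first element of the unique $\omega$ block of $\L$. Since that block is unique and $a$ is distinguished inside it as its first element (it satisfies $p(a)=a$ while also having infinitely many successors), the automorphism orbit of $a$ is the singleton $\{a\}$. To establish $1$-freeness, fix any tuple $\bar{b}\in\L$ together with any bound $N$ on the term-depth of the quantifier-free $sp$-formulas to be matched. Let $K$ bound the $\omega$-depth of elements in $\bar{b}\cap\omega$, so that each such element has the form $s^{k_i}(a)$ with $k_i\leq K$. By the hypothesis on $\B$, we may pick a finite block $F\subseteq\B$ of size greater than $K+N+1$ lying below every element of $\bar{b}\cap\B$ (or of arbitrary sufficient size if $\bar{b}\cap\B=\emptyset$). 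Set $a'=\min F$ and define $\bar{b}'$ by replacing each $b_i=s^{k_i}(a)\in\bar{b}\cap\omega$ by $s^{k_i}(a')\in F$, keeping all other coordinates of $\bar{b}$ fixed.

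The key verification is that $(a,\bar{b})$ and $(a',\bar{b}')$ satisfy identical quantifier-free $sp$-formulas of term-depth at most $N$. This reduces to three routine checks: inside the $\omega$ block and inside $F$, the maps $s$ and $p$ act correspondingly for iterations of depth up to $K+N$ by our size choice of $F$, with $p(a)=a$ mirroring $p(a')=a'$; the $sp$-closures of elements in $\bar{b}\cap\A$ remain inside $\A$ (which has no maximum, since otherwise its maximal block would merge with $\omega$, contradicting the assumed decomposition $\L=\A+\omega+\B$), and similarly the closures of elements in $\bar{b}\cap\B$ stay inside their blocks which lie above $F$; and the order comparisons across the three regions $\A$, $\omega$ (respectively $F$), and $\bar{b}\cap\B$ are preserved because $\A<a,a'$ and $F<\bar{b}\cap\B$. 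Since $a$ lies in the unique $\omega$ block while $a'$ lies in a finite block, no automorphism sends $a$ to $a'$, so $a'\notin\mathrm{orbit}(a)$, completing the $1$-freeness argument.

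The symmetric case $\L=\A+\omega^*+\B$ is handled dually by taking $a$ to be the \emph{last} element of the unique $\omega^*$ block, and choosing $F\subseteq\A$ above every element of $\bar{b}\cap\A$ with $a'=\max F$. The main technical point in both cases is purely combinatorial: the hypothesis on $\B$ (respectively $\A$) is precisely what permits the choice of an arbitrarily large finite block $F$ in the required location, and once $F$ is chosen the verification of quantifier-free type-matching is mechanical.
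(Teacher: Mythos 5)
Your proof follows essentially the same route as the paper's: both show that the first element of the unique $\omega$ block is $1$-free by replacing the $\omega$-part of the Spoiler's tuple with a suitably large finite block of $\B$ chosen below $\bar{b}\cap\B$, and both exploit the observation that only finitely many quantifier-free $sp$-formulas (bounded term-depth via the standard enumeration convention) must be matched at the $\leq_0$ level. Your extra remark that $\A$ has no maximum (else its last element would be a predecessor of $\min\omega$ and the written $\omega$ would not be a genuine block) is a nice explicit check that the paper leaves implicit.

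One step should be sharpened. The definition of $1$-freeness requires exhibiting $a'$ with $a\bar{b}\leq_0 a'\bar{b}'$ \emph{and} $a\not\leq_1 a'$. You conclude by saying "no automorphism sends $a$ to $a'$, so $a'\notin\mathrm{orbit}(a)$," but non-automorphicity alone does not imply the failure of $\leq_1$; two non-automorphic elements can perfectly well satisfy $a\leq_1 a'$. What you need is a $\Pi_1$ (or a quantifier-free infinitary conjunction, which is still $\Pi_1$) distinction: here it is immediate, since $a$ satisfies $\bigwwedge_{k}\big(s^k(x)\neq s^{k+1}(x)\big)$ (it heads an infinite $\omega$-block) while $a'$ does not ($F$ is finite, so $s^{|F|-1}(a')=s^{|F|}(a')$). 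Since your choice of $F$ already depends on the tuple length, this costs nothing, but the justification as written is not the right one.
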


\begin{proof}
    We show that $x$, the initial element in the $\omega$ block, is 1-free.
    It is essential for this argument to recall that $\bar{x}\leq_0\bar{y}$ holds not in the case that $\bar{x}$ and $\bar{y}$ agree on \textit{every} quantifier-free formula, but rather the first $|\bar{y}|=|\bar{x}|$ many quantifier-free formulas.
    By convention, fix an enumeration of $sp-$quantifier-free formulas where functions are not composed $k+1$ times in any of the first $k$ formulas.
    Say that on the first turn of the back-and-forth game the $\forall$-player plays $\bar{b}=\bar{b}_1\bar{b}_2\bar{b}_3$ extending $x$, where $\bar{b}_1$ is contained in $\A$, $\bar{b}_2$ in $\omega$ and $\bar{b}_3$ in $\B$.
    Without loss of generality, $\bar{b}_2$ contains the first $m$ elements of $\omega$.
    Let $N=m+|\bar{b}|$.
    Find a block of size at least $N$ that is less than any block of any element of $\bar{b}_3$ (this is possible by assumption), and let $\bar{b}'$ be the first $m$ elements of this block.
    We claim that $\bar{b}_1\bar{b}_2\bar{b}_3\leq_0 \bar{b}_1\bar{b}'_2\bar{b}_3$.
    This is true because 
    \begin{enumerate}
        \item there are no functional relationships between any of $\bar{b}_1$, $\bar{b}_2$, $\bar{b}_3$, and  $\bar{b}'_2$,
        \item $\bar{b}_2$ and $\bar{b}'_2$ agree on the first $|\bar{b}|$ formulas (in each case every element has at least $|\bar{b}|$ successors and the same number of predecessors),
        \item $\bar{b}_1$ and $\bar{b}_3$ stay the same in both tuples.
    \end{enumerate}
    This shows that $x$ is 1-free and finishes the proof.

    The second half of the lemma follows by a symmetrical argument.
\end{proof}

Note, however, that this is the only barrier to being uniformly relatively categorical that there is.

\begin{lemma}\label{lem:allCasesSp}
    If $\L$ is an $sp$-homogeneous linear ordering such that $(\L,<)$ is uniformly $\Delta_3$ categorical and $(\L,<,s,p)$ is not in one of the cases described by Lemma \ref{lem:forbiddensp} then $(\L,<,s,p)$ is uniformly relatively categorical.
\end{lemma}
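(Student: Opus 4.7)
The goal is to construct, uniformly and computably, a $\Sigma_1$ Scott family in the language $\{<, s, p\}$ for $\L$; this yields uniform relative computable categoricity of $(\L, <, s, p)$ via the classical characterization of uniform relative computable categoricity in terms of formally $\Sigma_1$ Scott families.

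The approach is to translate the uniformly computable $\Sigma_3$ Scott family in $\{<\}$ produced in the proof of Theorem \ref{spud3cat}. The key gain is that the atomic building blocks of those definitions---$P_n(x)$, $S_m(x)$, and $Adj_k(x, y)$---are $\Sigma_2$ in $\{<\}$ but are quantifier-free in $\{<, s, p\}$ (for instance, $P_n(x) \equiv p^n(x) \neq x$). Likewise, the formulas $\varphi_{n,<}$ and $\varphi_{n,>}$ of Proposition \ref{prop:Sigma3blockabove} become $\Sigma_1$ in $sp$, because a block of size $n$ is specified by a quantifier-free $sp$-configuration on $n$ witnesses. Walking through the cases of Propositions \ref{prop:Delta3begin} and \ref{prop:omegaAnalysis} together with the surrounding block-analysis tables, the $\Sigma_3$ formulas used there will drop to $\Sigma_1$ under this translation.

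The plan is then to split the analysis by the location of the tuple $\bar{a}$ whose orbit we wish to define: \emph{(i)} tuples lying entirely in finite blocks, handled quantifier-free via $P_n$, $S_m$, and $Adj_k$; \emph{(ii)} tuples in infinite blocks sitting inside a shuffle sum $Sh(S)$, where the hypotheses combined with Proposition \ref{prop:forbiddenShuffles} force $S$ to contain only finitely many finite block sizes, so ``infinitely many successors'' is witnessed by the quantifier-free statement $S_{M+1}(x)$ with $M = \max(S \cap \omega)$; and \emph{(iii)} tuples in isolated infinite blocks, where the assumption that we are not in a forbidden case of Lemma \ref{lem:forbiddensp} supplies a bounded-block-size ``buffer'' immediately adjacent to the isolated block. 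In case \emph{(iii)}, existential witnesses drawn from the buffer pin down the orbit in $\Sigma_1$ form. To separate tuples lying in structurally distinct pieces of $\L$, characteristic block sizes (guaranteed to exist by Theorem \ref{thm:homogChar}) serve as $\Sigma_1$ witnesses.

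The main obstacle will be the analogue of the $\chi_{n,S}$ formula from the proof of Proposition \ref{prop:omegaAnalysis}(3). In the pure linear-ordering language that formula contained a universal subformula---``no chain of $N+1$ successors lies in the interval $(b, y_1)$''---which in $sp$ is $\Pi_1$ rather than $\Sigma_1$, so the naive translation is not good enough. The planned resolution is to replace that negative universal condition with a positive existential one: existentially produce the first element $p^n(x)$ of the $\omega$-block containing $x$ together with a witness block of characteristic size from the adjacent buffer, and check quantifier-free-ly along the $s$-chain starting at $p^n(x)$ that this chain exceeds the buffer's block-size bound. Carrying out this transformation in every configuration permitted by the case analysis of Theorem \ref{spud3cat} is the core technical work of the proof.
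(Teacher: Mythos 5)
Your overall approach---translate the $\Sigma_3$ Scott family from Theorem \ref{spud3cat} into a $\Sigma_1$ Scott family in the $sp$-language, exploiting that $P_n$, $S_m$, $Adj_k$, $\varphi_{n,<}$, $\varphi_{n,>}$ drop from $\Sigma_2$/$\Sigma_3$ to quantifier-free/$\Sigma_1$---is exactly the paper's strategy, and the three-case split by block type is on target. However, you have missed the key structural observation that makes the argument close cleanly, and this leads you to treat a nonexistent case as the ``main obstacle.''

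The point is that the two hypotheses combine to rule out the asymmetric situation that $\chi_{n,S}$ was invented for. Theorem \ref{spud3cat}(3)--(4) allows a single $\omega$ (resp.\ $\omega^*$) block to have a bounded-block-size buffer on only one side, and the $\chi_{n,S}$ formula in Proposition \ref{prop:omegaAnalysis}(3) exists precisely to handle that lopsided case: the universal subformula $\forall \bar z\in(b,y_1)\,\lnot S_{N+1}(\bar z)$ is doing the work of ruling out large finite blocks on the \emph{unbuffered} side. But Lemma \ref{lem:forbiddensp} is exactly a statement that the lopsided case (an $\omega$ followed by arbitrarily large finite blocks, or dually an $\omega^*$ preceded by them) is \emph{not} uniformly relatively computably categorical in the $sp$-language. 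Once you assume you are outside those cases as well as inside the $\Delta_3$-categorical ones, a short case check shows that \emph{every} maximal purely infinite interval $I$ sits as $\L=\A'+\A+I+\B+\B'$ with both $\A$ and $\B$ using only finitely many finite block sizes. So $\varphi_{n,<}\land\varphi_{m,>}$ ($\Sigma_1$ in $sp$) isolates $\A+I+\B$ outright, and within that interval the orbit definitions are just $P_k(x)\land S_N(x)\land\lnot P_{k+1}(x)$ and duals (all quantifier-free in $sp$), as in Propositions \ref{prop:Delta3begin} and \ref{prop:infiniteAnalysis}(3). The formula $\chi_{n,S}$ is never needed, and your proposed replacement of the negative universal by a positive existential is both unnecessary and, as sketched, not clearly sound (nothing in the sketch prevents the witness chain from living in a large finite block far from the buffer). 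Add the observation about two-sided buffers and your proof collapses to the paper's.
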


\begin{proof}
    If $(\L,<)$ is uniformly $\Delta_3$ categorical, then it avoids the situations listed in Theorem \ref{spud3cat}.
    Because $(\L,<,s,p)$ is not in any of the cases listed by Lemma \ref{lem:forbiddensp}, checking case by case, this means that every purely infinite interval $I$ must have $\L=\A'+\A+I+\B+\B'$ where $\A$ and $\B$ only use finitely many finite blocks. 
    Note that the formulas $\varphi_{n,<}$ and $\varphi_{n,>}$ from Proposition \ref{prop:Sigma3blockabove} are $\Sigma_1$ in $s$ and $p$.
    In particular, it is enough to show that, in the remaining cases, $\A+I+\B$ is uniformly, relatively, computably categorical. 
    As pointed out in the proof of Proposition \ref{prop:infiniteAnalysis} (3), each of the elements in a finite block or a block of type $\omega$ or $\omega^*$ has its orbits in $\A+I+\B$ defined by formulas of the form $P_n(x), \lnot P_n(x), S_m(x),\lnot S_m(x)$.
    Note that each of these is $\Sigma_1$ in $s$ and $p$ (in fact, quantifier-free), so the automorphism orbits in each of these block types are suitably defined.
    By Theorem \ref{spud3cat} (5) along with Lemma \ref{lem:forbiddensp}, $I$ either has a block of type $\omega$ or a shuffle sum of finitely many finite blocks to its right and a block of type $\omega^*$ or a shuffle sum of finitely many finite blocks to its left.
    In each case, $\zeta$ or $\zeta\cdot\eta$ is $\Sigma_1$ definable within the ordering, which yields the needed Scott set.
    It is always quantifier-free to define the first element of $\omega$ or the last of $\omega^*$, so it is $\Sigma_1(<,s,p)$ to say you are to the left or right (resp.) of these blocks.
    Saying that the elements of $\zeta$ or $\zeta\cdot\eta$ are not among a shuffle sum of finitely many finite blocks comes down to simply stating the existence of some amount of successors and predecessors, which is quantifier-free in $s$ and $p$.
    In total, this gives the desired Scott set for $(\L,<,s,p)$.
\end{proof}

Taken together, these lemmas yield the following result.

 \begin{theorem}
        A homogeneous $sp$-linear ordering $(\L,<,s,p)$ is uniformly relatively computably categorical if and only if it is relatively computably categorical if and only if: 
        \begin{enumerate}
            \item It has no intervals of the form $Sh(S)$ where $S$ includes an infinite block and finite blocks of arbitrary size or $\zeta$ along with another finite block.
            \item If $I$ is $\omega\cdot\eta$, $Sh(\omega,\omega^*)$, $\omega^*\cdot\eta$ or $\zeta\cdot\eta$ or a sum of at least two of those orderings, it has an interval to its left and right in $\L$ that only has finite blocks of a bounded size.
            \item Any $\zeta\cdot\eta$ does not have an interval to its right or left isomorphic to a shuffle sum including an infinite block.
        \end{enumerate}
    \end{theorem}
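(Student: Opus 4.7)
The plan is to close the chain: uniform relative computable categoricity of $(\L,<,s,p)$ implies relative computable categoricity, which implies conditions (1)--(3), which in turn imply uniform relative computable categoricity. The first step is immediate from the definitions. For the second, I would apply Lemma~\ref{lem:ConnectingSpTo<}, which converts any $\Sigma_1^0$ Scott family for $(\L,<,s,p)$ into a $\Sigma_3^0$ Scott family for $(\L,<)$; hence $(\L,<)$ is relatively $\Delta_3^0$ categorical, and Theorem~\ref{spd3} yields the three structural conditions.

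The substantive implication is from conditions (1)--(3) back to uniform relative computable categoricity of the $sp$-structure, and the plan is to mirror, in the $sp$-language, the parameter-based argument used to derive Theorem~\ref{spd3} from Theorem~\ref{spud3cat}. Specifically, under (1)--(3), I would select a finite tuple of parameters $\bar{a}$ placed at the distinguished points of the infinite blocks that conditions (1)--(3) permit but Theorem~\ref{spud3cat} forbids: the first element of any single $\omega$ block positioned so as to violate condition (3) of Theorem~\ref{spud3cat}, the last element of any single $\omega^*$ violating its condition (4), and a central element of any single $\zeta$ block violating its condition (5). The three conditions force only finitely many such trouble spots, so $\bar{a}$ is indeed finite. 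The induced decomposition $\L = L_0 + 1 + L_1 + 1 + \cdots + 1 + L_n$ then has each $L_i$ satisfying the full hypotheses of Theorem~\ref{spud3cat} and simultaneously avoiding every forbidden configuration of Lemma~\ref{lem:forbiddensp}, so that Lemma~\ref{lem:allCasesSp} gives uniform relative computable categoricity of each $L_i$ as an $sp$-structure. Concatenating the piecewise Scott families produces a uniform $\Sigma_1^0$ Scott family for $(\L,<,s,p)$ over $\bar{a}$.

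The hard part will be the final step: removing the parameters $\bar{a}$ to obtain a parameter-free Scott family. Each element of $\bar{a}$ lies in a uniquely occurring infinite block and is a distinguished first, last, or middle point, so its $sp$-automorphism orbit should be $\Sigma_1^0$ definable without parameters by combining quantifier-free successor/predecessor data (such as $p(x)=x$, $s(x)=x$, or an exact count $p^k(x)\neq p^{k+1}(x)=p^{k+2}(x)$) with $\Sigma_1^0$-expressible neighborhood data; crucially, ``$x$ lies in a finite block'' is already $\Sigma_1^0$ as $\bigvee_n s^n(x)=s^{n+1}(x)$, and under the three conditions the neighborhoods of the parameters are bounded enough for the finitely many relevant block sizes and shuffle supports to be similarly $\Sigma_1^0$ in $s,p$. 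Substituting these parameter-free definitions for each occurrence of a member of $\bar{a}$ in the parameterized Scott family yields the desired parameter-free uniform $\Sigma_1^0$ Scott family, closing the chain. The principal technical bookkeeping lies in verifying, case by case through the local configurations permitted by (1)--(3), that every parameter admits such a $\Sigma_1^0$ orbit definition and that the substitution preserves $\Sigma_1^0$ complexity throughout the family.
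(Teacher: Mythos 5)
Your outline correctly handles the two easy implications, but the parameter-removal step at the end is where the plan breaks down, and this gap is decisive. You claim that the $sp$-orbit of each parameter $a_i$ can be isolated by a $\Sigma^0_1$ formula without parameters. This fails when $a_i$ is, say, the first element of a single $\omega$ block abutting a shuffle sum with finite blocks of unbounded size: the orbit of $a_i$ must express that $a_i$ lies in an infinite block, which is $\bigwwedge_n s^n(x)\neq s^{n+1}(x)$, a $\Pi^0_1$ condition with no $\Sigma^0_1$ replacement, because any finite existential witness certifying $a_i$ is also realized by the first element of a sufficiently large finite block in the adjacent shuffle. This is exactly the $1$-freeness argument in Lemma~\ref{lem:forbiddensp}. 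The $\Sigma^0_1$ formula you cite, $\bigvvee_n s^n(x)=s^{n+1}(x)$, expresses the opposite claim (that $x$ has finitely many successors), so it does not supply the needed definition.

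Conditions (1)--(3) do not rule this configuration out. Take $\L=\omega+Sh(\omega)$: it is sp-homogeneous, satisfies (1)--(3) (none of which constrain a single $\omega$ block), and over the parameter $a$ given by the first element of $\omega$ its $sp$-orbits are quantifier-free, so $(\L,<,s,p)$ is relatively computably categorical; yet Lemma~\ref{lem:forbiddensp} shows it is \emph{not} uniformly relatively computably categorical. So the implication from (1)--(3) to uniform relative computable categoricity fails, as does the claimed equivalence between the uniform and non-uniform notions. Your decomposition at distinguished points does not repair this, since removing the parameter still leaves a tail of type $\omega+Sh(\omega)$, which remains a forbidden case of Lemma~\ref{lem:forbiddensp}, so Lemma~\ref{lem:allCasesSp} cannot be applied to that piece. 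It is worth noting that the paper's own proof has the same lacuna: it cites Lemma~\ref{lem:allCasesSp} without verifying that conditions (1)--(3) imply its hypothesis of avoiding the configurations of Lemma~\ref{lem:forbiddensp}, and the example above suggests that implication is in fact false.
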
 
   
\begin{proof}
    Note that any uniformly relatively computably categorical structure is also relatively computably categorical by definition.
    Furthermore, by Lemma \ref{lem:ConnectingSpTo<} and Theorem \ref{spd3}, all relatively computably categorical $(\L,<,s,p)$ must satisfy the three listed structural criteria.
    By Lemma \ref{lem:allCasesSp} and Theorem \ref{spud3cat}, any linear ordering with these criteria is uniformly relatively computably categorical as \textit{sp}-orderings.
\end{proof}

\bibliographystyle{amsplain}
\bibliography{dclo}

\end{document}